\numberwithin{equation}{section}
\newtheorem{theorem}{Theorem}[section]
\newtheorem{lemma}[theorem]{Lemma}
\newtheorem{prop}[theorem]{Proposition}
\newtheorem{cor}[theorem]{Corollary}
\theoremstyle{definition}
\newtheorem{defin}[theorem]{Definition}
\newtheorem{rem}[theorem]{Remark}
  \let\div\relax
  \DeclareMathOperator{\div}{div}
\DeclareMathOperator{\Capa}{cap}
\DeclareMathOperator{\dist}{dist}
\DeclareMathOperator{\diam}{diam}
\DeclareMathOperator{\graph}{graph}
\newcommand{\mres}{\mathbin{\vrule height 1.6ex depth 0pt width
0.13ex\vrule height 0.13ex depth 0pt width 1.3ex}}
\newcommand{\cpp}[2]{\int_{#1}{\vert\nabla #2\vert^p}dx}
\title[Quantitative isocapacitary inequality]{The sharp quantitative isocapacitary inequality (the case of $p$-capacity)}
\author[E. Mukoseeva]{Ekaterina Mukoseeva}
\address{E.M: Department of Mathematics and Statistics, P.O. Box 68 (Gustaf H\"allstr\"omin katu 2), FI-00014 University of Helsinki, Finland}
\email{ekaterina.mukoseeva@helsinki.fi}
\begin{document}

\begin{abstract}
We prove a sharp quantitative form of isocapacitary inequality
in the case of a general $p$. This work is a generalization
of the author's paper with Guido De Philippis and Michele Marini,
where we treated the case of $2$-capacity.
\end{abstract}

\maketitle

\section{Introduction}
	
\subsection{Background}
Let $\Omega\subset\mathds{R}^N$ be an open set. We define the \emph{$p$-capacity} of $\Omega$ as 
\begin{equation}\label{e:pcapdef}
\Capa_p(\Omega)=\inf_{u\in C_c^\infty(\mathds{R}^N)}\left\{\int_{\mathds{R}^N}{\vert\nabla u\vert^p}dx: u\ge1\text{ on }\Omega\right\}
\end{equation}
for $1<p<n$.
It  is easy to see that  for problem \eqref{e:pcapdef} there exists  a unique function\footnote{Here $D^{1,p}$ denotes homogeneous Sobolev space, i.e. $D^{1,p}=\overline{C^\infty_c(\mathds{R}^n)}^{\Vert\cdot\Vert_{D^{1,p}}}$, where $$\Vert u\Vert_{D^{1,p}}~:=~\Vert\nabla u\Vert_{L^p(\mathds{R}^n)}$$} $u\in D^{1,p}(\mathds{R}^n)$ called {\it capacitary potential} of $\Omega$ such that
\[
\int_{\mathds{R}^N} |\nabla u|^p=\Capa_p(\Omega) .
\]

Moreover, it satisfies the  Euler-Lagrange equation (if $\Omega$ is sufficiently smooth, e.g. Lipschitz):			
\begin{equation*}
\begin{cases}
\div(\vert\nabla u\vert^{p-2}\nabla u)=0\text{ in }\overline{\Omega}^c, \\
u=1\text{ on }\partial \Omega, \\
u(x)\rightarrow 0\text{ as }\vert x\vert\rightarrow\infty.
\end{cases}
\end{equation*}		
		
P\'olya-Szeg\"o principle yields the well-known {\it isocapacitary inequality}, telling that, among all sets with given volume, balls have the smallest possible $p$-capacity, namely
\begin{equation}\label{iso_p}
\Capa_p(\Omega)-\Capa_p(B_r)\ge 0.
\end{equation}
Here $r$ is such that $|B_r|=|\Omega|$, where $|\cdot|$ denotes the Lebesgue measure.

Indeed, to prove \eqref{iso_p} recall the notion of {\it Schwarz symmetrization}. Let $\Omega$ be an open set and let $u$ be its capacitary potential. Schwarz symmetrization provides us with a radially symmetric function $u^*$ such that, for every $t\in\mathds R$,
\begin{equation}\label{equim}
\left\vert\{x: u(x)>t\}\right\vert=\left\vert\{x: u^*(x)>t\}\right\vert.
\end{equation}
We use $u^*$ as a test function for the set $\{x\,:\, u^*(x)=1\}=B_r$  and we note that  \eqref{equim} yields that $|B_r|=|\Omega|$. Hence  
\[
\Capa_p(B_r)\leq\int_{\mathds{R}^N}{\vert\nabla u^*\vert^p}dx\le\int_{\mathds{R}^N}{\vert\nabla u\vert^p}dx
=\Capa_p(\Omega) \qquad |\Omega|=|B_r|,
\]
where the second inequality follows by  P\'olya-Szeg\"o principle. 

Inequality \eqref{iso_p} is rigid, that is, equality is attained only when $\Omega$ coincides with a ball, up to a set of zero $p$-capacity. To see that, one may use that the equality in P\'olya-Szeg\"o inequality is attained only when $u$ is almost everywhere equal to a translate of $u^*$ (see, for example, \cite{BZ}).

\medskip

It is natural to wonder whether this inequality is also stable, that is $\Omega\to B_r$, whenever $\Capa_p(\Omega)\to\Capa_p(B_r)$.
This indeed turns out to be true and the right choice of the distance
between sets is \emph{Fraenkel asymmetry}, defined below.

\begin{defin}
		Let $\Omega$ be an open set. The Fraenkel asymmetry of \(\Omega\),  $\mathcal{A}(\Omega)$, is defined as:
		\begin{equation*}
			\mathcal{A}(\Omega)=\inf\left\{\frac{\vert\Omega\Delta B\vert}{\vert B\vert}\,:\, B \text{ is a ball with the same volume as }\Omega\right\}.
		\end{equation*}
\end{defin}	

To our knowledge, the first result of this sort goes back to \cite{HHW}, where they considered the  case of  planar sets and $p=2$ 
\footnote{Note that for \(N=p=2\) the infimum  \eqref{e:pcapdef} is \(0\) and one has to use the notion of logarithmic capacity.} 
and of convex sets in general dimension.
The sharp stability inequality for $N=p=2$ has been given by Hansen and Nadirashvili  in \cite[Corollary 1]{HN}. 
For general dimension and $p$, the best result to our knowledge is due to Fusco, Maggi, and Pratelli in \cite{nonsharp} where they prove the following:

	\begin{theorem} [\cite{nonsharp}]\label{t:fmp}
		There exists a constant $c=c(N,p)$ such that, for any open set $\Omega$
		\begin{equation*}
			\frac{\Capa_p(\Omega)-\Capa_p(B_r)}{r^{N-p}}\geq c\,\mathcal{A}(\Omega)^{2+p}.
		\end{equation*}
	\end{theorem}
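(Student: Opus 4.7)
The plan is to combine the P\'olya-Szeg\"o proof of the isocapacitary inequality with the sharp quantitative isoperimetric inequality of Fusco-Maggi-Pratelli, applied at each superlevel set of the capacitary potential. Let $u$ be the capacitary potential of $\Omega$, set $\mu(t)=|\{u>t\}|$, and let $r(t)$ be the radius of the ball of volume $\mu(t)$. By the coarea formula,
\[
\Capa_p(\Omega)=\int_0^1\int_{\{u=t\}}|\nabla u|^{p-1}\,d\mathcal{H}^{N-1}\,dt,
\]
while H\"older's inequality gives $P(\{u>t\})^p\le\bigl(\int_{\{u=t\}}|\nabla u|^{p-1}\,d\mathcal{H}^{N-1}\bigr)(-\mu'(t))^{p-1}$. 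Since both inequalities are equalities for the Schwarz symmetrand $u^*$, one obtains the level-wise deficit estimate
\[
\Capa_p(\Omega)-\Capa_p(B_r)\;\ge\;\int_0^1\frac{P(\{u>t\})^p-P(B_{r(t)})^p}{(-\mu'(t))^{p-1}}\,dt.
\]

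Next, I would apply the quantitative isoperimetric inequality $P(\{u>t\})-P(B_{r(t)})\ge c(N)\,r(t)^{N-1}\mathcal{A}(\{u>t\})^2$ together with the elementary bound $a^p-b^p\ge p\,b^{p-1}(a-b)$ to convert the above into a weighted integral of $\mathcal{A}(\{u>t\})^2$. The remaining central task is to transfer this information on level sets back to $\Omega=\{u\ge1\}$. The natural route is a dichotomy governed by a parameter $\eta\in(0,1)$ to be optimized at the end: either (i) there is a level $t_*$ with $\mu(t_*)\le(1+\eta)|\Omega|$ and $\mathcal{A}(\{u>t_*\})\gtrsim\mathcal{A}(\Omega)$, in which case the triangle inequality for the Fraenkel asymmetry (together with $|\{u>t_*\}\setminus\Omega|\le\eta|\Omega|$) closes the estimate directly; or (ii) the asymmetry of level sets stays small on an entire range of $t$ near $1$, which forces the symmetric difference between $\Omega$ and a nearby ball to be built up by the sliding of the level sets, and this sliding can be estimated by the integral of $(-\mu'(t))$ and hence by the Dirichlet energy via Chebyshev.

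The chief obstacle, and the source of the non-sharp exponent $p$ in addition to the expected $2$, is the weight $(-\mu'(t))^{p-1}$ appearing in the denominator of the integrand. Since one controls only $\int_0^1(-\mu'(t))\,dt=|\{u>0\}|-|\Omega|$ in $L^1$, any H\"older step against this weight costs a power of the asymmetry. Concretely, the quantitative isoperimetric inequality gives the exponent $2$, the H\"older interpolation needed to glue the level-set contributions into a global statement costs an additional $\mathcal{A}^p$, and optimizing the free parameter $\eta$ yields precisely $\mathcal{A}^{2+p}$. Pushing this down to the sharp exponent would require one of the more refined strategies (selection principle, reduction to nearly spherical sets) that the present paper develops.
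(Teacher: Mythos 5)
This theorem is cited from~\cite{nonsharp} (Fusco--Maggi--Pratelli 2009); the present paper does not prove it, so there is no internal argument to compare against. Your overall strategy---quantifying P\'olya--Szeg\"o by applying the sharp quantitative isoperimetric inequality level by level to the capacitary potential---is indeed the route taken in~\cite{nonsharp}, and your opening computations are correct: the coarea formula, the H\"older inequality $P(\{u>t\})^p\le\bigl(\int_{\{u=t\}}|\nabla u|^{p-1}\,d\mathcal{H}^{N-1}\bigr)(-\mu'(t))^{p-1}$, equality for the rearrangement $u^*$, and the resulting level-wise deficit estimate (one should note $\Capa_p(B_r)\le\int|\nabla u^*|^p$, not $=$, since $u^*$ is only a competitor, but the inequality still runs the right way).

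The gap is in the transfer from level sets back to $\Omega$ and in the exponent bookkeeping, both of which are only sketched. As stated the dichotomy is not quite right: once $\eta\lesssim\mathcal{A}(\Omega)$, the $L^1$-continuity of the Fraenkel asymmetry (triangle inequality plus the nesting $\Omega\subset\{u>t\}$) forces $\mathcal{A}(\{u>t\})\gtrsim\mathcal{A}(\Omega)$ for \emph{every} $t$ with $\mu(t)\le(1+\eta)|\Omega|$, so your alternative (ii) does not actually occur. More importantly, after fixing this one arrives at a bound of the shape
\[
D(\Omega)\gtrsim\mathcal{A}(\Omega)^2\int_{t_0}^1\frac{r(t)^{(N-1)p}}{(-\mu'(t))^{p-1}}\,dt,
\qquad \mu(t_0)=(1+c\,\mathcal{A}(\Omega))|\Omega|,
\]
and the H\"older/Jensen lower bound $\int_{t_0}^1(-\mu')^{1-p}\ge (1-t_0)^p\bigl(\mu(t_0)-|\Omega|\bigr)^{1-p}$ still requires a \emph{lower} bound on $1-t_0$; a priori $\mu$ may drop to $|\Omega|$ arbitrarily fast as $t\uparrow1$. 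Extracting a quantitative lower bound on $1-t_0$ from the smallness of the deficit is exactly the hard step in~\cite{nonsharp}, and it is not supplied here. The phrase ``the sliding can be estimated by the integral of $(-\mu')$ and hence by the Dirichlet energy via Chebyshev'' does not fill this gap: $\int(-\mu')$ is a volume increment, and Chebyshev on $|\nabla u|$ does not connect it to the energy in the direction you need. Finally, ``H\"older interpolation costs $\mathcal{A}^p$'' and ``optimizing $\eta$ yields $\mathcal{A}^{2+p}$'' are asserted rather than derived; the bookkeeping sketched above does not visibly produce the exponent $2+p$. So the level-set reduction is sound and is the right strategy, but the core gluing argument that actually yields the stated quantitative estimate is missing.
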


For the case $p=2$, the following inequality was obtained in 
a recent paper by De Philippis, Marini, and the author.

	\begin{theorem} [\cite{DPMM}]\label{thm:2-cap}
		There exists a constant $c=c(N,p)$ such that, for any open set $\Omega$
		\begin{equation*}
			\frac{\Capa_2(\Omega)-\Capa_2(B_r)}{r^{N-2}}\geq c\,\mathcal{A}(\Omega)^{2}.
		\end{equation*}
	\end{theorem}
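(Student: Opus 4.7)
My plan is to follow the two-step strategy that has become standard for sharp quantitative geometric inequalities and that was used for $p=2$ in \cite{DPMM}: first establish a Fuglede-type second-order expansion giving the inequality on sets $\Omega$ whose boundary is a $C^1$-small normal graph over $\partial B_r$, then reduce the general case to this nearly spherical regime via a selection/penalization argument combined with regularity for quasi-minimizers of $p$-capacity.

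For the Fuglede step, I would parametrize $\partial\Omega=\{(1+\varphi(\omega))r\omega:\omega\in\mathds{S}^{N-1}\}$ with $\|\varphi\|_{C^1}$ small, normalized so that $|\Omega|=|B_r|$ and, after a translation (which costs only a constant factor in $\mathcal{A}$), so that the barycenter of $\Omega$ lies at the origin. The capacitary potential of $B_r$ is the explicit radial power $u_0(x)=(r/|x|)^{(N-p)/(p-1)}$ on $\{|x|\ge r\}$, and I would write the capacitary potential of $\Omega$ as a perturbation of $u_0$, expand the Dirichlet-type energy $\int|\nabla u|^p$ to second order in $\varphi$, and obtain
\[
\Capa_p(\Omega)-\Capa_p(B_r)=Q(\varphi)+o(\|\varphi\|^2),
\]
where $Q$ is a quadratic form arising from the linearization of the $p$-Laplace operator around $u_0$. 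Diagonalizing $Q$ in spherical harmonics $\varphi=\sum_k\varphi_kY_k$, the mode $k=0$ is killed by the volume constraint and $k=1$ by the barycenter centering, while the coefficients $c_k(N,p)$ for $k\ge 2$ should be strictly positive and grow in $k$, yielding $Q(\varphi)\ge c\,\|\varphi\|_{H^{1/2}(\mathds{S}^{N-1})}^2$. Since $\mathcal{A}(\Omega)^2\lesssim\|\varphi\|_{L^1}^2\lesssim\|\varphi\|_{H^{1/2}}^2$, this closes the nearly spherical case with the sharp exponent.

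For the selection principle, I would argue by contradiction: assuming a sequence $\Omega_k$ with $|\Omega_k|=|B_r|$ violates the estimate, replace each $\Omega_k$ by a minimizer $\tilde\Omega_k$ of a penalized functional of the form
\[
\Capa_p(E)-\Capa_p(B_r)+\Lambda_k\bigl(\mathcal{A}(E)-\mathcal{A}(\Omega_k)\bigr)^2+\mu_k\bigl||E|-|B_r|\bigr|,
\]
with parameters chosen so that the contradiction is preserved by the replacement. Standard lower semicontinuity and $L^1$-compactness, combined with Theorem \ref{t:fmp} to control $\mathcal{A}(\tilde\Omega_k)\to 0$, force $\tilde\Omega_k\to B_r$, and the quasi-minimality of $\tilde\Omega_k$ with respect to the pure $p$-capacity should upgrade this to $C^{1,\beta}$ convergence, placing us inside the scope of the Fuglede step and contradicting the failure of the stability estimate.

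The main obstacle, compared to the $p=2$ case of \cite{DPMM}, is twofold and concentrated in the Fuglede step. Analytically, the second-variation quadratic form $Q$ now carries the weight $|\nabla u_0|^{p-2}$, which is a power of $|x|$ that is either degenerate ($p>2$) or singular ($p<2$); computing the mode-by-mode coefficients $c_k$ thus reduces to solving, for each spherical harmonic, a radial ODE in this weighted setting and verifying that the Dirichlet-to-Neumann-like spectrum remains strictly positive and grows linearly in $k$ for $k\ge 2$, a point that in the linear case $p=2$ was immediate from the explicit Kelvin-transform structure. Geometrically, the regularity theory required in the selection principle must be adapted from the harmonic to the $p$-harmonic setting, so one needs quasi-minimality estimates and $\varepsilon$-regularity for the free boundary of $p$-capacitary quasi-minimizers, which is the technical hurdle I expect to require the most delicate work.
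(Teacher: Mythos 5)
Your high-level strategy --- Fuglede second-variation for nearly spherical sets followed by a Cicalese--Leonardi selection principle --- is exactly the route taken in \cite{DPMM} and in the present paper, and the outline of both steps is sound. However, as written there are two genuine gaps.

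First, you do not address the reduction to \emph{bounded} sets. The penalized problem, to have a minimizer, must be posed among competitors $E\subset B_R$ for some fixed $R$, but a generic open $\Omega$ with $|\Omega|=|B_1|$ need not be bounded, and simply intersecting $\Omega$ with a ball changes both the volume and the deficit. One needs a quantitative lemma (Lemma \ref{reducetobdd} here) asserting that one can replace $\Omega$ by a rescaled truncation $\tilde\Omega=\lambda(\Omega\cap B_S)$ with $\diam\tilde\Omega$ bounded, $D(\tilde\Omega)\le C\,D(\Omega)$ and $\mathcal A(\tilde\Omega)\ge\mathcal A(\Omega)-C\,D(\Omega)$. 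Establishing this requires the two-sided $p$-capacity comparison of Lemma \ref{estcap}, including the lower bound for $\Capa_p(\Omega)-\Capa_p(\Omega\cap B_S)$ coming from the explicit capacitary potential of $B_S$, and an iteration showing that the mass of $\Omega$ outside some dyadic annulus must eventually drop below $D(\Omega)$. This step is absent from your plan and without it the contradiction argument does not get off the ground.

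Second, your penalized functional penalizes directly with the Fraenkel asymmetry via $\Lambda_k\bigl(\mathcal A(E)-\mathcal A(\Omega_k)\bigr)^2$. This is the crucial wrong turn. The Euler--Lagrange equation for the free boundary of a minimizer requires computing the first variation of the asymmetry term, and the domain derivative of $\mathcal A$ is of order $\pm 1$ depending on which side of the optimal ball the boundary point lies; the coefficient $q_{v}$ in $\div(|\nabla v|^{p-2}\nabla v)=q_v\,\mathcal H^{N-1}\mres\partial^*\Omega$ would then be merely bounded, not continuous, and the improvement-of-flatness machinery from \cite{DP} --- which needs $q_v$ at least Lipschitz (indeed $C^{k,\gamma}$ to bootstrap to higher regularity) --- cannot be started. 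The paper sidesteps this by replacing $\mathcal A$ with the Almgren--Taylor--Wang-type asymmetry $\alpha(\Omega)=\int_{\Omega\Delta B_1(x_\Omega)}\big|1-|x-x_\Omega|\big|\,dx$, whose integrand vanishes on $\partial B_1$ and whose first variation is a smooth, bounded-by-$C\sigma$ function of $x$; this makes $q_{v_j}$ smooth (Lemma on smoothness of $q_v$) and is precisely what lets Theorem \ref{thm:pcapregflat} produce $C^{k}$ graphs. Relatedly, the paper's penalty $\sqrt{\varepsilon_j^2+\sigma^2(\alpha(\Omega)-\varepsilon_j)^2}$ is \emph{globally} $1$-Lipschitz, which is what gives quasi-minimality with respect to $\Capa_p$ with a controllably small constant; your quadratic penalty has an unbounded Lipschitz constant and would need additional a priori control to play the same role. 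Finally, as a minor point, the degeneracy of the $p$-Laplacian means one cannot directly take shape derivatives of $\Capa_p$: the paper works with the uniformly elliptic regularization $\Capa_{p,\kappa}$ and then sends $\kappa\to 0$, and you would need to do the same for $p\neq 2$.
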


The goal of this paper is to extend the result of Theorem \ref{thm:2-cap} to the case of a general $p$.

	\subsection{Main result and strategy of the proof}
	
	The following is the main result of the paper. By the scaling \(\Capa_p(\lambda \Omega)=\lambda^{N-p}\Capa_p(\Omega)\), we can also get the analogous result for  $\Omega$ with  arbitrary volume.

	\begin{theorem}\label{thm:main p-cap}
		Let $\Omega$ be an open set such that $\vert\Omega\vert=\vert B_1\vert$.
		Then there exists a constant $c=c(N,p)$ such that the following inequality holds:
				\begin{equation*}
					\Capa_p(\Omega)-\Capa_p(B_1)\geq c\,\mathcal{A}(\Omega)^2.
				\end{equation*}
	\end{theorem}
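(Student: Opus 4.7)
The natural plan is to mimic the approach of Theorem \ref{thm:2-cap}, which combines a Cicalese--Leonardi selection principle with a sharp Fuglede-type inequality for nearly-spherical sets. Concretely, I would argue by contradiction: assuming the sharp inequality fails, one produces a sequence $\Omega_k$ with $|\Omega_k|=|B_1|$, $\mathcal{A}(\Omega_k)\to 0$, and $\Capa_p(\Omega_k)-\Capa_p(B_1)\ll \mathcal{A}(\Omega_k)^2$. The goal is then to replace $\Omega_k$ by a ``regularized'' sequence of near-minimizers with the same failure, smooth enough that one can parametrize $\partial\Omega_k$ over $\partial B_1$ and run a second-variation computation there.

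\textbf{Selection principle.} I would introduce a penalized functional such as
\begin{equation*}
\mathcal{F}_k(\Omega)=\Capa_p(\Omega)-\Capa_p(B_1)+\Lambda_k\bigl(\mathcal{A}(\Omega)-\varepsilon_k\bigr)^2+\mu_k\bigl||\Omega|-|B_1|\bigr|,
\end{equation*}
with $\varepsilon_k=\mathcal{A}(\Omega_k)\to 0$ and $\Lambda_k$ chosen so that any minimizer $\tilde\Omega_k$ still realizes $\mathcal{A}(\tilde\Omega_k)=\varepsilon_k$ up to lower order. Existence of minimizers in a suitable class follows from the concentration-compactness/quantitative isoperimetric inequality as in \cite{DPMM} and \cite{nonsharp}. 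The Euler--Lagrange equation and the quasi-minimality with respect to the perimeter and the $p$-capacity then force $\tilde\Omega_k$ to be $C^{1,\alpha}$-close to $B_1$; here one combines $p$-harmonic regularity (De Giorgi--Lieberman-type results) with the fact that the capacitary potential enters the variation of $\Capa_p$ as an outer normal flux that itself is $C^{0,\alpha}$. The outcome is a parametrization $\partial\tilde\Omega_k=\{(1+u_k(\theta))\theta:\theta\in\partial B_1\}$ with $\|u_k\|_{C^{1,\alpha}}\to 0$.

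\textbf{Fuglede-type estimate, the main obstacle.} On such nearly-spherical sets I would prove
\begin{equation*}
\Capa_p(\Omega)-\Capa_p(B_1)\;\ge\; c\,\|u\|_{H^{1/2}(\partial B_1)}^{2},
\end{equation*}
modulo the barycenter/volume constraints that eliminate the $\ell=0,1$ spherical harmonics. For $p=2$ this was proved in \cite{DPMM} by expanding the harmonic capacitary potential into spherical harmonics and computing eigenvalues explicitly. For general $p$, the capacitary potential of $B_1$ is the radial function $U(r)=r^{(p-N)/(p-1)}$, and the linearization of the $p$-Laplace operator at $U$ is a linear elliptic operator with weights that depend only on $|x|$. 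I expect this to be the main technical obstacle: one has to diagonalize this weighted linear operator on radial $\times$ spherical variables (decomposing into spherical harmonics $Y_\ell$) and solve an ODE in $r$ on each mode to identify the correct Dirichlet-to-Neumann eigenvalues, then verify the sharp positivity giving the $\|\cdot\|_{H^{1/2}}^2$ lower bound. The algebra is heavier than for $p=2$, but since the linearized operator is still radially symmetric and self-adjoint in the natural weighted norm, the spectral decomposition should go through; the signs of the eigenvalues ultimately come from the monotonicity of the associated ODEs in $r$.

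\textbf{Concluding the contradiction.} Once the Fuglede estimate is in place, the classical argument (see \cite{nonsharp,DPMM}) gives $\|u_k\|_{H^{1/2}}^{2}\ge c\,\mathcal{A}(\tilde\Omega_k)^2$ by comparing the $L^2$ distance between the parametrization and its rigid translations with the Fraenkel asymmetry, and then bootstrapping through the $\ell=1$ mode using the barycenter constraint. Combining with the selection principle reduction yields
\begin{equation*}
\Capa_p(\tilde\Omega_k)-\Capa_p(B_1)\;\ge\; c\,\mathcal{A}(\tilde\Omega_k)^2 = c\,\varepsilon_k^2,
\end{equation*}
contradicting the choice of the sequence. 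The only genuinely new ingredient beyond \cite{DPMM} and the standard regularity theory is the sharp spectral analysis of the linearized $p$-capacity functional; everything else is a reasonably direct adaptation of the $p=2$ machinery.
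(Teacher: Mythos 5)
Your high-level strategy is essentially the paper's: argue by contradiction, run a selection principle to replace the contradicting sequence by minimizers of a penalized free boundary problem that converge smoothly to $B_1$, and close the loop with a Fuglede-type second-variation estimate for nearly spherical sets. That said, there are several concrete gaps.

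First, you omit the reduction from general open sets to sets contained in a fixed ball $B_R$. Your penalized minimization is only well posed on a bounded class (otherwise there is no compactness and the penalization does not control escape to infinity), and the paper devotes an entire section (Lemma \ref{reducetobdd}) to proving that one may replace $\Omega$ by $\lambda(\Omega\cap B_S)$ while controlling both deficit and asymmetry. This is not automatic and has to be done by quantitative capacity estimates on $\Omega\cap B_S$ (Lemma \ref{estcap}).

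Second, you do not address the degeneracy of the $p$-Laplace equation at critical points of the potential. The second-variation computation requires differentiating the capacitary potential with respect to the domain deformation, and the linearized operator degenerates where $\nabla u=0$. The paper handles this by working with the uniformly elliptic $\kappa$-perturbed functional $\mathrm{cap}_{p,\kappa}$, proving the shape-Hessian formula there, and passing to the limit $\kappa\to 0$ with uniform estimates (Theorem \ref{thm:convergence of u_kappa to u_0}, Lemma \ref{l:continuity of the second derivative p cap}). Without some such regularization, the ``linearize and diagonalize'' step you describe is not justified.

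Third, your penalization uses $\mathcal{A}(\Omega)$ directly, but Fraenkel asymmetry is only an $L^1$-type quantity and is not differentiable enough to produce a usable Euler--Lagrange equation. The paper replaces it by the smoother $L^2$-type asymmetry $\alpha(\Omega)$ (the Almgren--Taylor--Wang distance) and penalizes by the Lipschitz function $\sqrt{\varepsilon_j^2+\sigma^2(\alpha(\Omega)-\varepsilon_j)^2}$ precisely so that domain variations yield a pointwise optimality condition on $\partial^*\Omega_j$ (Lemma \ref{l:EL}).

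Finally, $C^{1,\alpha}$ closeness of $\partial\tilde\Omega_k$ to $\partial B_1$ is not enough for the Fuglede step: the nearly-spherical Taylor expansion requires $C^{2,\gamma}$ control of the parametrization. Moreover, this regularity does not come from elliptic regularity of the capacitary potential; it is a \emph{free boundary} regularity statement. The paper obtains it through Lipschitz/nondegeneracy estimates, density estimates, and the flatness-improvement theorem of Danielli--Petrosyan (Theorem \ref{thm:pcapregflat}), combined with the smoothness of the ``Neumann data'' $q_{v_j}$ extracted from the Euler--Lagrange equation. Your sketch conflates regularity of the potential with regularity of the free boundary; these are genuinely different problems, and the latter is where most of the technical work lives for general $p$.

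On the spectral computation itself you are correct that the radial symmetry of the linearized operator lets one separate variables in spherical harmonics; the paper finds $u_{k,i}=|x|^{\alpha_k}Y_{k,i}$ with $\alpha_k$ a root of a quadratic, and verifies the needed coercivity after removing the $k=0,1$ modes via volume and barycenter constraints. That part of your outline matches the paper.
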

	
	Notice that by testing the inequality on ellipsoids, one can see that
the exponent $2$ on the right hand side of the inequality in Theorem
\ref{thm:main p-cap} cannot be improved.
	
	A natural way to tackle a problem like this is 
	quantifying the proof of inequality \eqref{iso_p}. 
	Indeed, that was done in \cite{nonsharp}.
	However, it seems that one is bound to get a non-sharp
	exponent on the right-hand side arguing in such a way.	
	So instead we employ the approach of \cite{CL}, 
	devised for proving sharp isoperimetric inequality.
	The method, called Selection Principle, can be described as follows.

	We first prove the theorem for smooth enough sets. This can be done by taking shape derivatives, writing Taylor expansion
	near the ball
	for $p$-capacity and providing relevant bounds for the second 
	derivative and the remainder term. That is done in Section \ref{fuglede}.	
	
	Then we prove the theorem for bounded sets of small asymmetry. 
	We argue by contradiction. Suppose there exists a sequence
	of sets $\Omega_j$ such that $\frac{\Capa_p(\Omega_j)-\Capa_p(B_1)}{\mathcal{A}(\Omega_j)^2}$ converges to 0.
	We then perturb the sequence $\{\Omega_j\}$ in such a way
	that the new sequence $\{\tilde{\Omega}_j\}$ still contradicts
	the theorem but also consists of minimizers of certain
	free boundary problems. By employing regularity results
	of \cite{DP}, we infer that the sets $\tilde{\Omega}_j$ need to be
	smooth. That leads us to contradiction with the first step of 
	the proof. This is the content of Sections \ref{penpb}-\ref{reg}.
	
	Finally, in Sections \ref{redtobdd} and \ref{s:proof} we reduce
	the general case to the one of bounded sets of small asymmetry.
	To reduce to the bounded sets, we provide bounds for the asymmetry
	and deficit of the set $\Omega\cap B_R$, where $\Omega$ is an
	arbitrary open set and $R$ is big enough. To reduce to the sets
	of small asymmetry we use the non-sharp quantitative inequality
	of \cite{nonsharp}.
	
	Note that as in \cite{fkstab} and \cite{DPMM}, we replace the Fraenkel asymmetry (which roughly resembles a \(L^1\) type norm) with a smoother (and stronger) version inspired by the distance among sets first used by Almgren, Taylor, and Wang in~\cite{AlmgrenTaylorWang93} which resembles an \(L^2\) type norm, see Section \ref{sec:lin} for the exact definition.	

 \subsection*{Acknowledgements}
The work of the author is supported by the INDAM-grant ``Geometric Variational Problems" and by the H2020-MSCA-RISE-2017 PROJECT No. 778010 IPADEGAN. The author wishes to thank Guido De Philippis for many fruitful discussions.

	\section{Fuglede's computation}	\label{fuglede}

		In this section we are going to prove the validity of the  quantitative isocapacitary inequality for sets close to the unit ball.
		More precisely, we are going to prove Theorem \ref{thm:main p-cap} for nearly spherical sets which are defined below. The proof is based on second variation argument as in \cite{Fuglede89} (compare with \cite[Section 2]{DPMM}).		

		\begin{defin}\label{d:ns}
		  An open bounded set $\Omega\subset\mathds{R}^N$ is called nearly spherical
			of class $C^{2,\gamma}$ parametrized by $\varphi$, if there exists
			$\varphi\in C^{2,\gamma}$ with $\Vert\varphi\Vert_{L^\infty}<\frac{1}{2}$ such that
			\begin{equation*}
				\partial\Omega=\{(1+\varphi(x))x: x\in\partial B_1\}.
			\end{equation*}	
		\end{defin}

	As mentioned in the introduction, we are going to work with another notion of asymmetry. Let us define it and state basic properties.	
	
	\begin{defin}
		Let $\Omega$ be an open set in $\mathds{R}^N$.
		Then we define the asymmetry $\alpha$ in the following way:
				\begin{equation*}
					\alpha(\Omega)=\int_{\Omega\Delta B_1(x_\Omega)}\big\vert 1-\vert x-x_\Omega\vert\big\vert dx.
				\end{equation*}
		Here $x_\Omega$ denotes the barycenter of $\Omega$, namely $x_\Omega=\fint_{\Omega}{x}dx$.
	\end{defin}	
	
	\begin{lemma}[{\cite[Lemma 4.2]{fkstab}}] \label{propasym}
	Let \(\Omega \subset \mathds R^n\), then 
		\begin{enumerate}[label=\textup{(\roman*)}]
			\item \label{compasym} There exists a constant $c=c(N)$ such that
							$$\alpha(\Omega)\geq c\vert\Omega\Delta B_1(x_\Omega)\vert^2$$
						for any open set $\Omega$.
			\item \label{asymlip} There exists a constant $C=C(R)$ such that	
				\begin{equation*}
					\vert\alpha(\Omega_1)-\alpha(\Omega_2)\vert\leq C\vert\Omega_1\Delta \Omega_2\vert
				\end{equation*}
				for any $\Omega_1,\Omega_2\subset B_R$. In particular, if $1_{\Omega_k}\rightarrow 1_\Omega$ in $L^1(B_R)$
				then  $\alpha(\Omega_k)\rightarrow\alpha(\Omega)$.
			\item \label{asymnrlysphrsets}
			There exist constants $C=C(N)$, $\delta=\delta(N)$ such that
			for every nearly spherical set $\Omega$ parametrized by $\varphi$ with $\Vert\varphi\Vert_{\infty}\leq\delta$ 
			and $x_\Omega=0$
			\begin{equation*}
				\alpha(\Omega)\leq C\Vert\varphi\Vert_{L^2(\partial B_1)}^2.
			\end{equation*}	
		\end{enumerate}
	\end{lemma}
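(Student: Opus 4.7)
\emph{For \textup{(i)}:} The plan is to observe that the integrand equals $\dist(x, \partial B_1(x_\Omega))$. Setting $M := |\Omega \Delta B_1(x_\Omega)|$, the key point is that the $\rho$-tubular neighborhood of $\partial B_1(x_\Omega)$ has Lebesgue measure at most $C_N \rho$ for $\rho \in (0, 1/2)$, so Chebyshev-type reasoning gives
\[
\alpha(\Omega) \geq \rho\, |\{x \in \Omega \Delta B_1(x_\Omega) : \dist(x, \partial B_1(x_\Omega)) > \rho\}| \geq \rho(M - C_N \rho).
\]
Choosing $\rho = M/(2C_N)$ (and handling the trivial large-$M$ regime separately via a direct lower bound) yields $\alpha(\Omega) \geq c M^2$.

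\emph{For \textup{(ii)}:} Write $\alpha(\Omega) = \int_{\mathds R^N} g_\Omega(x)\, |\mathds{1}_\Omega - \mathds{1}_{B_1(x_\Omega)}|\, dx$ with $g_\Omega(x) := |1 - |x - x_\Omega||$. Three Lipschitz ingredients suffice: (a) on sets of controlled volume contained in $B_R$, the barycenter map satisfies $|x_{\Omega_1} - x_{\Omega_2}| \leq C(R) |\Omega_1 \Delta \Omega_2|$, directly from the definition of barycenter; (b) $\|g_{\Omega_1} - g_{\Omega_2}\|_\infty \leq |x_{\Omega_1} - x_{\Omega_2}|$ by the triangle inequality; (c) $|B_1(x_{\Omega_1}) \Delta B_1(x_{\Omega_2})| \leq C |x_{\Omega_1} - x_{\Omega_2}|$. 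A standard ``add and subtract'' estimate combining these (with $g_{\Omega_j} \leq R + 1$ on the supports) delivers the Lipschitz bound, and the $L^1$ continuity statement follows at once.

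\emph{For \textup{(iii)}:} Use spherical coordinates $x = r\sigma$, $\sigma \in \partial B_1$. Since $x_\Omega = 0$ and $\partial \Omega = \{(1 + \varphi(\sigma))\sigma\}$, the symmetric difference is swept by radial intervals between $1$ and $1 + \varphi(\sigma)$, so
\[
\alpha(\Omega) = \int_{\partial B_1} \int_{\min(1, 1 + \varphi(\sigma))}^{\max(1, 1 + \varphi(\sigma))} |1 - r|\, r^{N-1}\, dr\, d\mathcal{H}^{N-1}(\sigma).
\]
The substitution $s = r - 1$ together with $\|\varphi\|_\infty \leq \delta \leq 1/2$ bounds the inner integral by $C_N \varphi(\sigma)^2$, whence the claim.

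The only mildly delicate point is the barycenter Lipschitz estimate in (ii), which relies on a uniform lower bound on the volumes, natural in the context of the paper; the remaining steps are elementary computations.
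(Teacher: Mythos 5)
The paper offers no proof of this lemma; it is cited verbatim from~\cite[Lemma 4.2]{fkstab}, so there is no ``paper's own proof'' to compare against. What follows is therefore a review of your argument on its own terms.

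Your proof of \textup{(iii)} is correct: the spherical-coordinate reduction, the substitution $s=r-1$, and the crude bound $(1+s)^{N-1}\le (3/2)^{N-1}$ for $|s|\le\delta\le 1/2$ give exactly the claimed $\alpha(\Omega)\le C_N\|\varphi\|_{L^2(\partial B_1)}^2$. Your proof of \textup{(ii)} is also structurally correct: writing $\alpha$ as a weighted $L^1$ norm, using the triangle inequality $\|g_{\Omega_1}-g_{\Omega_2}\|_\infty\le|x_{\Omega_1}-x_{\Omega_2}|$, the obvious bound $|B_1(x_{\Omega_1})\Delta B_1(x_{\Omega_2})|\le C|x_{\Omega_1}-x_{\Omega_2}|$, and $(A\Delta B)\Delta(C\Delta D)\subset (A\Delta C)\cup(B\Delta D)$ does the job. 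You are right, and honest, that the barycenter Lipschitz estimate $|x_{\Omega_1}-x_{\Omega_2}|\le C(R)|\Omega_1\Delta\Omega_2|$ needs a lower volume bound; in~\cite{fkstab} and in every application inside this paper the sets have $|\Omega|=|B_1|$ (or are nearby in measure), so this is a benign implicit hypothesis, but it does deserve to be stated rather than waved away.

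The gap that should be fixed is in \textup{(i)}, and it is not the one you flag. The issue is not that the ``large-$M$ regime'' is merely trivial to handle---it is that the inequality $\alpha(\Omega)\ge c\,|\Omega\Delta B_1(x_\Omega)|^2$ is \emph{false} for arbitrary open sets. Take $\Omega=B_R(0)$ with $R$ large: then $x_\Omega=0$, $M:=|\Omega\Delta B_1|\sim\omega_N R^N$, while
\[
\alpha(\Omega)=\int_{B_R\setminus B_1}(|x|-1)\,dx\sim\tfrac{N\omega_N}{N+1}R^{N+1},
\]
so $\alpha(\Omega)/M^2\sim c\,R^{1-N}\to 0$ for $N\ge 2$. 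Your Chebyshev step is fine, but the choice $\rho=M/(2C_N)$ must satisfy $\rho\le 1/2$ to use the tubular-neighborhood bound $|\{1-\rho<|x-x_\Omega|<1+\rho\}|\le C_N\rho$, and there is no ``direct lower bound'' that rescues the regime where $M$ is unbounded. What saves the lemma is the tacit normalization $|\Omega|=|B_1|$, which forces $M\le 2\omega_N$; then $\rho=M/(2C_N)\le 1/(2N(3/2)^{N-1})<1/2$ for all admissible $M$ and your estimate $\alpha\ge\rho(M-C_N\rho)=M^2/(4C_N)$ closes the argument with no side cases at all. You should state the volume constraint explicitly and drop the reference to a separate large-$M$ case, since under the correct hypothesis there isn't one.
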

	We are also going to use the following norm:
	\[
	\Vert\varphi\Vert^2_{H^\frac{1}{2}(\partial B_1)}:=\int_{\partial B_1}\varphi^2 d\mathcal{H}^{N-1}+\int_{B^c_1}\vert\nabla H(\varphi)\vert^2 dx,
	\]
	where $H(\varphi)$ is the harmonic extension of $\varphi$ into $B^c$, i.e. it is the solution of
			\begin{equation*}
				\begin{cases}
					\Delta H(\varphi)=0\text{ in }B_1^c, \\
					H(\varphi)=\varphi\text{ on }\partial B_1, \\
					H(\varphi)(x)\rightarrow 0\text{ as }\vert x\vert\rightarrow\infty
				\end{cases}
			\end{equation*}	
	in $D^{1,2}(\mathds{R}^N)$. Note that this norm is equivalent to the standard one, where the second integral is replaced by Gagliardo seminorm (see for example \cite[(1,3,3,3)]{Gr}).
		
	We now want to write Taylor expansion for $p$-capacity around the ball. Let us first introduce a technical lemma that is going to give us a sequence of sets along which we will be taking derivatives.
One can find its proof in \cite[Lemma A.1]{fkstab}.
		
		\begin{lemma}\label{lm:vectorfield}
			Given $\gamma\in(0,1]$ there exists $\delta=\delta(N,\gamma)>0$
			and a modulus of continuity $\omega$ such that for every nearly spherical set $\Omega$
			parametrized by $\varphi$ with $\Vert\varphi\Vert_{C^{2,\gamma}(\partial B_1)}<\delta$
			and $\vert\Omega\vert=\vert B_1\vert$, we can find an autonomous vector field $X_\varphi$
			for which the following holds true:
			\begin{enumerate}[label=\textup{(\roman*)}]
				\item $\div{X_\varphi}=0$ in a $\delta$-neighborhood of $\partial B_1$;
				\item if $\Phi_t:=\Phi(t,x)$ is the flow of $X_\varphi$, i.e.
					\begin{equation*}
					  \partial_t\Phi_t=X_\varphi(\Phi_t),\qquad \Phi_0(x)=x,
					\end{equation*}
					then $\Phi_1(\partial B_1)=\partial\Omega$ and $\vert\Phi_t(B_1)\vert=\vert B_1\vert$ for all $t\in[0,1]$;
				\item 
					\begin{itemize}
						\item $\Vert\Phi_t-Id\Vert_{C^{2,\gamma}}\leq\omega(\Vert\varphi\Vert_{C^{2,\gamma}(\partial B_1)})$ for every $t\in[0,1]$,
						\medskip
						
						\item $\Vert\varphi-(X_\varphi\cdot\nu_{B_1})\Vert_{H^{\frac{1}{2}}(\partial B_1)}\leq\omega(\Vert\varphi\Vert_{L^\infty(\partial B_1)})\Vert\varphi\Vert_{H^{\frac{1}{2}}(\partial B_1)}$,
						
						\medskip
						
						\item $(X\cdot x)\circ\Phi_t-X\cdot\nu_{B_1}=(X\cdot\nu_{B_1})\psi_t$, $x\in\partial B_1$, 
						
						where $\Vert\psi_t\Vert_{C^{2,\gamma}(\partial B_1)}\leq\omega(\Vert\varphi\Vert_{C^{2,\gamma}(\partial B_1)})$.
					\end{itemize}
			\end{enumerate}
			
		\end{lemma}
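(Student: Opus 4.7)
The plan is to construct $X_\varphi$ in two main steps: first build a convenient time-dependent divergence-free field $Y_t$ realizing a volume-preserving deformation from $B_1$ to $\Omega$; then apply an implicit function theorem to replace this by an autonomous field with the same endpoint behavior.

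For the first step I would parametrize a path of nearly spherical sets $\partial\Omega_t=\{(1+\varphi_t(\omega))\omega:\omega\in\partial B_1\}$ with $\varphi_t=t\varphi+\lambda(t)$, where $\lambda(t)\in\mathds{R}$ is chosen so that $|\Omega_t|=|B_1|$ for every $t$. The hypothesis $|\Omega|=|B_1|$ forces $\int_{\partial B_1}\varphi\,d\mathcal{H}^{N-1}=O(\|\varphi\|_{L^\infty}^2)$, so the scalar equation for $\lambda(t)$ is easily solved with $|\lambda(t)|\lesssim\|\varphi\|_{L^\infty}^2$, and $\varphi_t$ inherits the $C^{2,\gamma}$ bound of $\varphi$. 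Differentiating $|\Omega_t|=|B_1|$ in $t$ shows that the normal speed $g_t$ of $\partial\Omega_t$ has zero integral, which is precisely the compatibility condition to solve the Neumann problem $\Delta u_t=0$ in a thin annulus $A=B_{1+\eta}\setminus B_{1-\eta}$ with $\partial_\nu u_t=g_t$ (pulled back to $\partial B_1$). Setting $Y_t=\nabla u_t$ and multiplying by a radial cutoff yields a divergence-free, compactly supported field whose flow matches the motion of $\partial\Omega_t$; Schauder estimates give $\|Y_t\|_{C^{2,\gamma}}\lesssim\|\varphi\|_{C^{2,\gamma}}$, and the harmonic extension characterization of $H^{1/2}(\partial B_1)$ combined with a second-order expansion in $\varphi$ yields $\|Y_0\cdot\nu_{B_1}-\varphi\|_{H^{1/2}}\lesssim\omega(\|\varphi\|_{L^\infty})\|\varphi\|_{H^{1/2}}$.

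To pass from the time-dependent $Y_t$ to an autonomous $X_\varphi$, I would apply the implicit function theorem on the Banach space of compactly supported divergence-free $C^{2,\gamma}$ fields near $0$: the map sending $X$ to a parametrization of $\Phi_1^X(\partial B_1)$ is smooth and has differential $X\mapsto X\cdot\nu_{B_1}$ at the origin, which is surjective onto the mean-zero $C^{2,\gamma}$ functions on $\partial B_1$ (the mean being exactly what the volume constraint fixes). This produces an autonomous $X_\varphi$ with $\Phi_1(\partial B_1)=\partial\Omega$, with $\|X_\varphi\|_{C^{2,\gamma}}$ controlled by $\|\varphi\|_{C^{2,\gamma}}$, and with $X_\varphi-Y_0$ of higher order in $\|\varphi\|$. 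The three bounds in (iii) then follow: the $C^{2,\gamma}$ control of $\Phi_t-\mathrm{Id}$ from Gr\"onwall applied to the flow ODE; the $H^{1/2}$ estimate from the bound on $Y_0\cdot\nu_{B_1}-\varphi$ above combined with the smallness of $X_\varphi-Y_0$; and the last identity from a Taylor expansion in $t$ of $(X_\varphi\cdot x)\circ\Phi_t$, whose leading term is $X_\varphi\cdot\nu_{B_1}$ and whose remainder $\psi_t$ is controlled in $C^{2,\gamma}$ by $\|\Phi_t-\mathrm{Id}\|_{C^{2,\gamma}}$. The main obstacle is reconciling \emph{exact} volume preservation along the whole flow with the $H^{1/2}$ bound; this is what pins down the smallness threshold $\delta=\delta(N,\gamma)$ appearing in the statement and makes the implicit function scheme converge.
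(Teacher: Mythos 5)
The paper does not prove this lemma: immediately after stating it the text says ``One can find its proof in [Lemma A.1, \textit{fkstab}]''. So the comparison is against the construction in Brasco--De Philippis--Velichkov rather than against an argument in this paper. Your proposal is in the spirit of that construction (build a divergence-free field whose normal trace on $\partial B_1$ is essentially $\varphi$, cut off near the sphere), but as a proof it has several concrete gaps.

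First, the Neumann problem you set up is not well posed as written. You propose $\Delta u_t=0$ in $A=B_{1+\eta}\setminus B_{1-\eta}$ with ``$\partial_\nu u_t=g_t$ pulled back to $\partial B_1$'', but $\partial B_1$ is not a component of $\partial A$; you must prescribe Neumann data on \emph{both} spheres $\partial B_{1\pm\eta}$, and the compatibility and flux conditions have to be tracked there. Second, the claim that ``multiplying by a radial cutoff yields a divergence-free, compactly supported field'' is false: $\div(\zeta\nabla u)=\nabla\zeta\cdot\nabla u$, which is nonzero wherever $\zeta'\neq0$. This can be repaired by choosing $\zeta\equiv1$ on a smaller annulus (the lemma only asks for $\div X=0$ in a $\delta$-neighbourhood of $\partial B_1$), but then the second assertion of (ii), $|\Phi_t(B_1)|=|B_1|$, no longer follows from Liouville's theorem: since $\div X\neq 0$ on the shells where $\zeta'\neq0$, one has $\tfrac{d}{dt}|\Phi_t(B_1)|=\int_{\Phi_t(B_1)}\div X$, which vanishes only if the flux of $\nabla u$ through each sphere in the annulus vanishes — i.e.\ one must additionally impose zero-mean Neumann data on \emph{each} boundary component, a point your sketch does not address.

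Third, the passage from the time-dependent $Y_t$ to an autonomous field via the implicit function theorem is far sketchier than it needs to be. To apply IFT one needs a Banach-space right inverse to $X\mapsto X\cdot\nu_{B_1}$ within the class of divergence-free fields with the required support — which is precisely the extension problem whose delicacy is the subject of the first two points, so nothing is actually gained. Moreover, (iii) demands an $H^{1/2}$ estimate whose smallness factor is $\omega(\|\varphi\|_{L^\infty})$, not $\omega(\|\varphi\|_{C^{2,\gamma}})$; your phrase ``with $X_\varphi-Y_0$ of higher order in $\|\varphi\|$'' does not specify the norm, and an IFT remainder bound naturally comes in the $C^{2,\gamma}$ topology, which is too weak to give the required modulus in $L^\infty$. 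This particular bullet is the hardest part of the lemma (it is what makes Fuglede's second-variation argument close), and the proposal does not convincingly produce it.

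In short, the overall strategy is reasonable, but the Neumann formulation, the cutoff/divergence-free bookkeeping, the resulting volume preservation, and the $L^\infty$-modulus in the $H^{1/2}$ bound all have genuine gaps that would have to be filled before this could serve as a proof.
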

		
		\begin{rem}
		Here and in the sequel we will denote by $\nu_\Omega$ the \emph{inner} unit normal.
		\end{rem}

		Following \cite{FZ} (note
that we will be citing the preprint rather than the published version \cite{FZpublished} as it has
more details), we consider perturbed functionals
		to make the equation non-degenerate.
		
		\begin{defin} We define perturbed $p$-capacity as follows:
			\begin{equation*}
				\mathrm{cap}_{p,\kappa}(\Omega)=\inf_{u\in W^{1,p}(\mathbb{R}^N)}\left\{\int_{\Omega^c}\left((\kappa^2+\vert\nabla u\vert^2)^{\frac{p}{2}}-\kappa^p\right)\,dx: u=1 \text{ on }\Omega\right\}.
			\end{equation*}
		\end{defin}
		
		\begin{rem}
			Note that the infimum is achieved by the unique solution of the following equation
			\begin{equation*}
				\begin{cases}
					\div((\kappa^2+\vert\nabla u\vert^2)^{\frac{p-2}{2}}\nabla u)=0\text{ in }\Omega^c,\\
					u=1\text{ on }\partial\Omega,\\
					u(x)\rightarrow 0\text{ as }\vert x\vert\rightarrow\infty.
				\end{cases}
			\end{equation*}
			We will denote the minimizer by $u_{\kappa,\Omega}$.
		\end{rem}

		Let $\Phi_t$ be the flow from the lemma and define $\Omega_t=\Phi_t(B)$. For brevity we denote $u_{\kappa,t}:=u_{\kappa,\Omega_t}$.
		\begin{rem}\label{rem:pcap eq for pert energy and direction of the norm}
			The function $u_{\kappa,t}$ satisfies the following equation
			\begin{equation}\label{e:pcap eq kappa pert energy}
				\begin{cases}
					\div((\kappa^2+\vert\nabla u_{\kappa,t}\vert^2)^{\frac{p-2}{2}}\nabla u_{\kappa,t})=0\text{ in }\Omega_t^c,\\
					u_{\kappa,t}=1\text{ on }\partial\Omega_t,\\
					u_{\kappa,t}(x)\rightarrow 0\text{ as }\vert x\vert\rightarrow\infty.
				\end{cases}
			\end{equation}
		We also note that $\nabla u_{\kappa,t}=\vert\nabla u_{\kappa,t}\vert\nu_{\partial\Omega_t}$ 
		on $\partial\Omega_t$ since it is constant on the boundary and
		less than $1$ outside of the set by maximum principle.
		\end{rem}

		We want to differentiate the perturbed $p$-capacity of $\Omega_t$ in $t$. We introduce the following notation
		\begin{equation*}
			c_k(t):=\int_{\Omega_t^c}\left((\kappa^2+\vert\nabla u_{\kappa,t}\vert^2)^{\frac{p}{2}}-\kappa^p\right)\,dx.
		\end{equation*}
		Since for any $\kappa>0$ equation \eqref{e:pcap eq kappa pert energy} is elliptic, the following standard (see e.g. \cite[Proposition 3.1]{SZ}) differentiability result holds.
		\begin{lemma}[Shape derivative of $u_{\kappa,t}$]
		For any $\kappa>0$ the derivative of $u_{\kappa,t}$ in $t$ exists 
		and it solves the following equation
		\begin{equation}\label{e:eq for u dot pcap}
			\begin{cases}
				\div\big((\kappa^2+\vert\nabla u_{\kappa,t}\vert^2)^\frac{p-2}{2}\nabla \dot{u}_{\kappa,t}\\
				\quad\quad+(p-2)(\kappa^2+\vert\nabla u_{\kappa,t}\vert^2)^\frac{p-4}{2}(\nabla u_{\kappa,t}\cdot\nabla\dot{u}_{\kappa,t})\nabla u_{\kappa,t}\big)=0\text{ in }\Omega_t^c,\\
				\dot{u}_{\kappa,t}=-\nabla u_{\kappa,t}\cdot X\text{ on }\partial\Omega_t.
			\end{cases}
		\end{equation}
		\end{lemma}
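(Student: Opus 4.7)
The plan is to pull the problem back to the fixed domain $B_1^c$ via the flow $\Phi_t$, apply the implicit function theorem in a suitable Banach-space setting (exploiting the fact that $\kappa>0$ makes the operator non-degenerate), and then differentiate the equation together with the Dirichlet condition.

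First, set $v_{\kappa,t}(y):=u_{\kappa,t}(\Phi_t(y))$ for $y\in B_1^c$; since $\Phi_t$ maps $B_1^c$ diffeomorphically onto $\Omega_t^c$, the change of variables turns \eqref{e:pcap eq kappa pert energy} into a quasilinear PDE for $v_{\kappa,t}$ on the \emph{fixed} domain $B_1^c$, with coefficients depending smoothly on $t$ through $D\Phi_t$, $\det D\Phi_t$ and their inverses, and with Dirichlet datum $v_{\kappa,t}\equiv 1$ on $\partial B_1$. Let $F(t,v)$ denote this nonlinear map between appropriate function spaces (say $\mathds{R}\times(1+W^{1,p}_0\cap C^{1,\gamma})$ with values in the natural dual). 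Because $\kappa>0$, the principal part of $F$ in $v$ is uniformly elliptic in a neighborhood of $v_{\kappa,0}$, and consequently the Fréchet derivative $\partial_v F(0,v_{\kappa,0})$ is a topological isomorphism (Lax--Milgram on the linearization, together with standard Schauder estimates to upgrade regularity). The implicit function theorem then yields a $C^1$ curve $t\mapsto v_{\kappa,t}$, and composing with $\Phi_t^{-1}$ gives the existence of $\dot{u}_{\kappa,t}=\partial_t u_{\kappa,t}$.

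Once differentiability is secured, the bulk equation for $\dot{u}_{\kappa,t}$ is obtained by differentiating \eqref{e:pcap eq kappa pert energy} in $t$ and commuting $\partial_t$ with $\div$; the chain rule
\[
\partial_t\bigl[(\kappa^2+|\nabla u_{\kappa,t}|^2)^{(p-2)/2}\nabla u_{\kappa,t}\bigr]=(\kappa^2+|\nabla u_{\kappa,t}|^2)^{(p-2)/2}\nabla\dot{u}_{\kappa,t}+(p-2)(\kappa^2+|\nabla u_{\kappa,t}|^2)^{(p-4)/2}(\nabla u_{\kappa,t}\cdot\nabla\dot{u}_{\kappa,t})\nabla u_{\kappa,t}
\]
gives exactly the displayed equation in $\Omega_t^c$. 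For the boundary condition, differentiate the identity $u_{\kappa,t}(\Phi_t(y))\equiv 1$ at $y\in\partial B_1$: since $\partial_t\Phi_t=X(\Phi_t)$, the chain rule yields $\dot{u}_{\kappa,t}+\nabla u_{\kappa,t}\cdot X=0$ on $\partial\Omega_t$, as stated.

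The main obstacle is the functional-analytic setup for the implicit function theorem, namely choosing function spaces in which $F$ is $C^1$ and $\partial_v F$ is an isomorphism, with appropriate decay at infinity so that the unbounded exterior domain causes no trouble. The decisive point is that $\kappa>0$ keeps the linearization uniformly elliptic; this is precisely why the $\kappa$-perturbation was introduced, an analogous argument for genuine $p$-capacity breaking down at points where $\nabla u$ vanishes. With ellipticity in hand, the argument reduces to the standard shape-differentiation scheme as in \cite[Proposition 3.1]{SZ}.
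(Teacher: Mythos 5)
Your proposal is correct and takes essentially the same approach as the paper: the paper does not give an explicit proof but simply remarks that the result is ``standard'' and cites \cite[Proposition 3.1]{SZ}, relying precisely on the observation that $\kappa>0$ makes the equation uniformly elliptic; your sketch (pull-back to the fixed domain via $\Phi_t$, implicit function theorem using non-degeneracy, then differentiation of the bulk equation and of the transported boundary identity $u_{\kappa,t}\circ\Phi_t\equiv 1$) is exactly the argument that citation encapsulates.
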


		We want to see what happens near the ball for the initial functional. 
		To that end, we compute $u_0:=u_{0,0}$ and its gradient:
		\begin{equation*}
			u_0=\vert x\vert^\frac{p-n}{p-1}, \nabla u_0=\frac{p-n}{p-1}\vert x\vert^{\frac{p-n}{p-1}-1}\theta,
		\end{equation*}				
		where $\theta:=\frac{x}{\vert x\vert}$.

		\begin{theorem}[convergence of $u_{\kappa,t}$]\label{thm:convergence of u_kappa to u_0}
		Let $\kappa\in [0,1]$, $p>1$, $\alpha\in (0,1)$, $R>1$. 
		There exist $\beta\in(0,\alpha)$ and a modulus of continuity
		$\omega=\omega(p, \alpha, n)$
		such that if $\Omega$ is a $C^{2,\alpha}$ 
		nearly spherical set parametrized by $\varphi$ and 
		$\Vert\varphi\Vert_{C^{2,\alpha}(\partial B)}<\delta$, 
		then for all $t\in [0,1]$ and $\kappa\in [0,1]$
		we have
		\begin{equation*}
			\Vert u_0-u_{\kappa,t}\circ\Phi_t\Vert_{C^{1,\beta}(B_R\backslash B_1)}\leq\omega(\Vert\varphi\Vert_{C^{2,\alpha}(\partial B)}+\kappa).
		\end{equation*}				
		
		Moreover, there exist $\delta'>0$, 
		$0<\gamma<\alpha$ and a modulus of continuity 
		$\omega'=\omega'(p,\alpha,n,\varepsilon)$, such that if 
		$\Vert\varphi\Vert_{C^{2,\alpha}(\partial B)}+\kappa<\delta'$, 
		then for all $t\in [0,1]$
		\begin{equation*}
			\Vert u_0-u_{\kappa,t}\circ\Phi_t\Vert_{C^{2,\gamma}(B_R\backslash B_1)}\leq\omega'(\Vert\varphi\Vert_{C^{2,\alpha}(\partial B)}+\kappa).
		\end{equation*}				
		\end{theorem}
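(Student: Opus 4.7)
The plan is to transport everything to the fixed annulus via the flow. Set $v_{\kappa,t}:=u_{\kappa,t}\circ\Phi_t$ on $B_1^c$; then $v_{\kappa,t}=1$ on $\partial B_1$, $v_{\kappa,t}\to 0$ at infinity, and $v_{\kappa,t}$ satisfies a divergence-form elliptic equation with coefficient $A_t(x,\nabla v)$ obtained from \eqref{e:pcap eq kappa pert energy} by pulling back along $\Phi_t$. By Lemma \ref{lm:vectorfield}, $\Phi_t$ is $C^{2,\alpha}$-close to the identity and is volume-preserving in a neighborhood of $\partial B_1$, so $A_t$ is a small $C^\gamma$ perturbation of the $p$-Laplace coefficient $\xi\mapsto|\xi|^{p-2}\xi$ whenever $\|\varphi\|_{C^{2,\alpha}}+\kappa$ is small. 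The overall strategy is compactness plus uniqueness of the limit.

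For the $C^{1,\beta}$ estimate I would argue by contradiction: assume there are sequences $(\varphi_j,\kappa_j,t_j)$ with $\|\varphi_j\|_{C^{2,\alpha}}+\kappa_j\to 0$ but $\|u_0-v_{\kappa_j,t_j}\|_{C^{1,\beta}(B_R\setminus B_1)}\ge c>0$. Testing $c_\kappa(t_j)$ against $u_0\circ\Phi_{t_j}^{-1}$ gives uniform energy bounds, and comparison with $u_0$ gives uniform $L^\infty$ bounds and decay estimates. The interior and boundary $C^{1,\alpha'}$ regularity theory for degenerate elliptic equations of $p$-Laplace type (Lieberman, Tolksdorff, DiBenedetto), applied with the uniform $C^{2,\alpha}$ control of $\partial\Omega_{t_j}$ from Lemma \ref{lm:vectorfield}, provides uniform $C^{1,\alpha'}$ bounds on $u_{\kappa_j,t_j}$ near $\partial\Omega_{t_j}$ that transfer to $v_{\kappa_j,t_j}$ on $B_R\setminus B_1$ under pull-back. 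By Arzelà--Ascoli a subsequence converges in $C^{1,\beta}$ for $\beta<\alpha'$ to a limit $v_\infty$ which still satisfies $v_\infty=1$ on $\partial B_1$. Since $A_{t_j}\to|\xi|^{p-2}\xi$, stability of weak solutions yields that $v_\infty$ is $p$-harmonic on $B_R\setminus\overline{B_1}$; combined with the decay at infinity inherited from the uniform barrier, uniqueness of the capacitary potential of $B_1$ forces $v_\infty=u_0$, contradicting the lower bound.

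For the $C^{2,\gamma}$ estimate the decisive observation is that $|\nabla u_0|=\tfrac{n-p}{p-1}|x|^{(p-n)/(p-1)-1}$ is bounded above and below on $B_R\setminus B_1$. Once $\|\varphi\|_{C^{2,\alpha}}+\kappa$ is small enough that the first part forces $|\nabla v_{\kappa,t}|\ge c_0>0$ on $B_R\setminus B_1$, the quantity $(\kappa^2+|\nabla v_{\kappa,t}|^2)^{(p-2)/2}$ is bounded above and below \emph{uniformly in} $\kappa\in[0,1]$, so the equation for $v_{\kappa,t}$ becomes uniformly elliptic with $C^\gamma$ coefficients inherited from $\Phi_t\in C^{2,\alpha}$ and from the $C^{1,\beta}$ regularity of $\nabla v_{\kappa,t}$. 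Schauder estimates up to $\partial B_1$ give uniform $C^{2,\gamma}$ bounds; combining these with $C^{1,\beta}$ convergence via interpolation upgrades the convergence to $C^{2,\gamma'}$ for any $\gamma'<\gamma$, which is the second claim after relabeling.

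The main technical obstacle is the degenerate case $\kappa=0$, $p\ne 2$: we need uniform (in $j$) boundary $C^{1,\alpha'}$ estimates for $u_{0,t_j}$ on the \emph{varying} exterior domains $\Omega_{t_j}^c$, which requires careful use of Lieberman-type boundary regularity together with a uniform control of the decay at infinity so that no compactness is lost. Once this is in hand, the passage to the limit in the weak formulation and the identification $v_\infty=u_0$ are standard, and the non-degeneracy of $\nabla u_0$ makes the upgrade to $C^{2,\gamma}$ essentially automatic.
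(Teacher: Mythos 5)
Your proposal follows essentially the same strategy as the paper: pull back by the flow to the fixed domain, use Lieberman-type $C^{1,\beta'}$ estimates for degenerate equations to obtain uniform bounds, argue by contradiction via compactness and uniqueness of the limiting $p$-capacitary potential, and then use the non-degeneracy of $\nabla u_0$ to make the equation uniformly elliptic and apply Schauder to get the $C^{2,\gamma}$ estimate. The only cosmetic difference is at the very end (you close via interpolation between uniform $C^{2,\gamma}$ bounds and $C^{1,\beta}$ convergence, whereas the paper simply reruns the compactness/contradiction argument in the higher norm), but these are interchangeable standard moves.
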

		\begin{proof}
			The proof goes in the same way as the one of \cite[Theorem 2.2]{FZ}.
			We reproduce it here for the reader's convenience.
			
			First, we notice that regularity for degenerate elliptic equations (see \cite[Theorem 1]{L}) gives us
			\begin{equation}\label{e:C^1,beta bound for solutions pcap}
			\Vert u_{\kappa,t}\Vert_{C^{1,\beta'}(B_R\setminus\Omega_t)}\leq C=C(p,\alpha,n,\delta)
			\end{equation}
			for some $\beta'\in(0,\alpha)$, and every $\kappa\in[0,1]$,
			$t\in[0,1]$. Fix $\beta\in(0,\beta')$.
			To prove the first inequality we argue by contradiction. Suppose there exist sequences $\{\varphi_j\}$,
			$\{\kappa_j\}$, $\{t_j\}$ such that $\Vert\varphi_j\Vert_{C^{2,\alpha}(\partial B)}+\kappa_j\rightarrow 0$,
			\begin{equation}\label{e:contr assumption convergence of u_kappa}
			\limsup_{j\rightarrow\infty} \Vert u_0-u_{\kappa_j,t_j}\circ\Phi^j_{t_j}\Vert_{C^{1,\beta}(B_R\backslash B_1)}>0,
			\end{equation}
			where $\Phi^j$ is the flow from Lemma \ref{lm:vectorfield} associated with $\varphi_j$. 
			Using \eqref{e:C^1,beta bound for solutions pcap}, we
			extract a (non-relabelled) subsequence such that
			$\tilde{u}_j:=u_{\kappa_j,t_j}\circ\Phi^j_{t_j}$ converges
			to a function $u$ in $C^{1,\beta}$.	Each function 
			$\tilde{u}_j$ satisfies
			\begin{equation*}
			\begin{cases}
				\div\left(\left(\kappa_j^2+\left\vert\left(\left(\nabla\Phi^j_{t_j}\right)^{-1}\right)^t\nabla \tilde{u}_j\right\vert^2\right)^\frac{p-2}{2}M_j\nabla \tilde{u}_j\right)=0\text{ in } B^c,\\
				\tilde{u}_j=1\text{ on }\partial B,\\
				\tilde{u}_j(x)\rightarrow 0\text{ as }\vert x\vert\rightarrow \infty,
			\end{cases}
			\end{equation*}					
			where $M_j=\mathrm{det}\nabla\Phi^j_{t_j}\left(\nabla\Phi^j_{t_j}\right)^{-1}\left(\left(\nabla\Phi^j_{t_j}\right)^{-1}\right)^t$. Thus, $u$, as a limit of $\tilde{u}_j$ in $C^{1,\beta}$,
			satisfies
			\begin{equation*}
			\begin{cases}
				\div\left(\left\vert\nabla u\right\vert^{p-2}\nabla u\right)=0\text{ in } B^c,\\
				u=1\text{ on }\partial B,\\
				u(x)\rightarrow 0\text{ as }\vert x\vert\rightarrow\infty,
			\end{cases}
			\end{equation*}					
			meaning that $u$ coincides with $u_0$, which contradicts
			\eqref{e:contr assumption convergence of u_kappa}.
						
			To get convergence in $C^{2,\gamma}$, we notice
			that $0<c\leq\vert\nabla u_0\vert\leq C$ in $B_R\backslash B_1$. The $C^{1,\beta'}$ converges gives us that the same is true for $\nabla u_{\kappa,t}$ if $\Vert\varphi\Vert_{C^{2,\alpha}(\partial B)}+\kappa$ is small enough. From here equation for $u_{\kappa,t}$
			and Schauder estimates give us
			\begin{equation*}
			\Vert u_{\kappa,t}\Vert_{C^{2,\gamma'}(B_R\setminus\Omega_t)}\leq C=C(p,\alpha,n,\delta).
			\end{equation*}
			We now can argue in the same way as we did for $C^{1,\beta'}$ convergence.
		\end{proof}

		\subsection{First derivative}
		\begin{prop}
			For $\kappa>0$, $t\in[0,1]$ the perturbed p-capacity 
			is differentiable in $t$ and the following formula holds
			\begin{equation}\label{e:first derivative p-capacity}
			\begin{split}
				c'_\kappa(t)&=-p\int_{\partial\Omega_t}{\vert\nabla u_{\kappa,t}\vert^2(\kappa^2+\vert\nabla u_{\kappa,t}\vert^2)^\frac{p-2}{2}(X\cdot\nu)}d\mathcal{H}^{N-1}\\
				&\qquad+\int_{\partial\Omega_t}{\left((\kappa^2+\vert\nabla u_{\kappa,t}\vert^2)^\frac{p}{2}-\kappa^p\right)(X\cdot\nu)}d\mathcal{H}^{N-1}\\
				&=-p\int_{\Omega_t^c}{\div\left(\vert\nabla u_{\kappa,t}\vert^2(\kappa^2+\vert\nabla u_{\kappa,t}\vert^2)^\frac{p-2}{2}X\right)}dx\\
				&\qquad+\int_{\Omega_t^c}{\div\left(\left((\kappa^2+\vert\nabla u_{\kappa,t}\vert^2)^\frac{p}{2}-\kappa^p\right)X\right)}dx.
			\end{split}	
			\end{equation}
			Moreover, for every $t\in[0,1]$ we also have					
			\begin{equation}\label{e:first derivative at zero p-capacity}
				c'_0(t)=-(p-1)\int_{\partial\Omega_t}{\vert\nabla u_{0,t}\vert^p(X\cdot\nu)}d\mathcal{H}^{N-1}.
			\end{equation}
		\end{prop}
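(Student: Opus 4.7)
The plan is to perform the standard Hadamard shape differentiation in the non-degenerate regime $\kappa>0$, and then obtain the case $\kappa=0$ by passing to the limit.

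For $\kappa>0$, the Lagrangian $\xi\mapsto(\kappa^2+|\xi|^2)^{p/2}-\kappa^p$ is smooth and uniformly elliptic, so the shape derivative $\dot u_{\kappa,t}$ exists and solves \eqref{e:eq for u dot pcap}; the boundary value $\dot u_{\kappa,t}=-\nabla u_{\kappa,t}\cdot X$ comes from differentiating $u_{\kappa,t}\circ\Phi_t\equiv 1$ on $\partial B_1$. I then invoke Reynolds' transport formula on the (unbounded) exterior domain $\Omega_t^c$, which is legitimate because $X$ is compactly supported. Since $\nu=\nu_{\Omega_t}$ denotes the inner normal of $\Omega_t$, i.e.\ the outer normal of $\Omega_t^c$, this yields
\[
c_\kappa'(t)=p\int_{\Omega_t^c}(\kappa^2+|\nabla u_{\kappa,t}|^2)^{(p-2)/2}\nabla u_{\kappa,t}\cdot\nabla\dot u_{\kappa,t}\,dx+\int_{\partial\Omega_t}\bigl((\kappa^2+|\nabla u_{\kappa,t}|^2)^{p/2}-\kappa^p\bigr)(X\cdot\nu)\,d\mathcal{H}^{N-1}.
\]

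Next, I integrate the bulk term by parts. The interior divergence vanishes by equation \eqref{e:pcap eq kappa pert energy}, leaving a boundary contribution where I substitute the alignment identity $\nabla u_{\kappa,t}=|\nabla u_{\kappa,t}|\nu$ from Remark~\ref{rem:pcap eq for pert energy and direction of the norm} and the boundary value $\dot u_{\kappa,t}=-|\nabla u_{\kappa,t}|(X\cdot\nu)$. Combining produces the first displayed identity in \eqref{e:first derivative p-capacity}. The divergence formulation is then immediate: it is the divergence theorem applied on $\Omega_t^c$ to the compactly supported vector fields $|\nabla u_{\kappa,t}|^2(\kappa^2+|\nabla u_{\kappa,t}|^2)^{(p-2)/2}X$ and $((\kappa^2+|\nabla u_{\kappa,t}|^2)^{p/2}-\kappa^p)X$, again using that $\nu$ is the outward normal of $\Omega_t^c$.

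For \eqref{e:first derivative at zero p-capacity} I let $\kappa\to 0^+$ in the boundary form just derived. Theorem~\ref{thm:convergence of u_kappa to u_0} provides uniform $C^{1,\beta}$ convergence of $u_{\kappa,t}\circ\Phi_t$ to $u_0$ on $B_R\setminus B_1$, and since $|\nabla u_0|\ge c>0$ on $\partial B_1$, this upgrades to uniform $C^1$ convergence of $u_{\kappa,t}$ to $u_{0,t}$ near $\partial\Omega_t$, uniformly in $t\in[0,1]$. The boundary integrands therefore converge uniformly in $t$; coupled with pointwise convergence $c_\kappa(t)\to c_0(t)$, the classical criterion for uniform convergence of derivatives promotes the identity
\[
c_0'(t)=-p\int_{\partial\Omega_t}|\nabla u_{0,t}|^p(X\cdot\nu)\,d\mathcal{H}^{N-1}+\int_{\partial\Omega_t}|\nabla u_{0,t}|^p(X\cdot\nu)\,d\mathcal{H}^{N-1}=-(p-1)\int_{\partial\Omega_t}|\nabla u_{0,t}|^p(X\cdot\nu)\,d\mathcal{H}^{N-1}
\]
to a genuine derivative of $c_0$.

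The main delicate point is the differentiability of $t\mapsto u_{\kappa,t}$ for $\kappa>0$ in the appropriate weighted Sobolev space over the exterior. This is precisely what \cite[Proposition 3.1]{SZ} covers: the linearized operator is uniformly elliptic because $\kappa>0$, so the classical shape-derivative machinery produces $\dot u_{\kappa,t}$ with the required regularity. Everything else reduces to bookkeeping of normals, where the convention $\nu=\nu_{\Omega_t}^{\mathrm{inner}}$ must be threaded consistently through Reynolds' formula and the divergence theorem.
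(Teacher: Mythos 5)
Your proposal is correct and follows essentially the same route as the paper's proof: Hadamard's shape-derivative formula for $\kappa>0$, integration by parts in the bulk term using the state equation \eqref{e:pcap eq kappa pert energy} and the boundary condition on $\dot u_{\kappa,t}$, the alignment identity $\nabla u_{\kappa,t}=|\nabla u_{\kappa,t}|\nu$ from Remark~\ref{rem:pcap eq for pert energy and direction of the norm}, the divergence theorem for the volume form, and the convergence result of Theorem~\ref{thm:convergence of u_kappa to u_0} (together with the standard ``uniform convergence of derivatives'' criterion, which the paper leaves implicit) to pass to $\kappa=0$. The only cosmetic remark is that the nondegeneracy $|\nabla u_0|\ge c>0$ is not actually needed to read off $C^1$ convergence of the boundary integrands from the $C^{1,\beta}$ estimate; it is only used later in the paper to upgrade to $C^{2,\gamma}$.
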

		\begin{proof}
			By Hadamard's formula (see \cite[Chapter 5]{HP}),
			\begin{equation*}
			\begin{aligned}
				c'_\kappa(t)&=\int_{\Omega_t^c}{p\nabla u_{\kappa,t}\cdot\nabla \dot{u}_{\kappa,t}(\kappa^2+\vert\nabla u_{\kappa,t}\vert^2)^\frac{p-2}{2}}\,dx+\int_{\partial\Omega_t}{\left((\kappa^2+\vert\nabla u_{\kappa,t}\vert^2)^\frac{p}{2}-\kappa^p\right)(X\cdot\nu)}d\mathcal{H}^{N-1}\\
				&=\int_{\partial\Omega_t}{\left(-\nabla u_{\kappa,t}\cdot X\right)p(\kappa^2+\vert\nabla u_{\kappa,t}\vert^2)^\frac{p-2}{2}}\nabla u_{\kappa,t}\cdot\nu\,d\mathcal{H}^{N-1}\\
				&\qquad+\int_{\partial\Omega_t}{\left((\kappa^2+\vert\nabla u_{\kappa,t}\vert^2)^\frac{p}{2}-\kappa^p\right)(X\cdot\nu)}d\mathcal{H}^{N-1},
			\end{aligned}	
			\end{equation*}						
			where for the second equality we used the equations
			\eqref{e:pcap eq kappa pert energy} and \eqref{e:eq for u dot pcap}.
			It remains to notice that $\nabla u_{\kappa,t}=\vert\nabla u_{\kappa,t}\vert\nu$ on $\partial\Omega_t$ as noted in Remark
			\ref{rem:pcap eq for pert energy and direction of the norm}. This gives us the first equality
			of \eqref{e:first derivative p-capacity}, whereas the second equality of \eqref{e:first derivative p-capacity} follows
			from divergence theorem.			
			
			The convergence established in Theorem \ref{thm:convergence of u_kappa to u_0}  gives us \eqref{e:first derivative at zero p-capacity}.
		\end{proof}
		\subsection{Second derivative}
			To state the results for the second derivative we
			need to introduce the following weighted Sobolev space:
			$$D^{1,2}(B^c,\mu):=\left\{u\in H^1_{loc}(B^c):\int_{B^c}{\vert\nabla u\vert^2\,d\mu}<\infty\right\},$$
			where $d\mu=\vert x\vert^{\left(\frac{p-n}{p-1}-1\right)(p-2)}dx$.
			We denote by $D^{1,2}_0(B^c,\mu)$ the corresponding space of functions with zero trace on $\partial B$. 		
			
			\noindent
			We also denote the mean curvature of a smooth set $\Omega$ by $\mathscr{H}_{\Omega}$ ($\mathscr{H}_{\Omega}=\div\,\nu_{\Omega}${, note that $\mathscr{H}_{\Omega_t^c}=-\mathscr{H}_{\Omega_t}$}).
						
			\begin{prop}\label{prop:second derivative p-capacity}
			We define $X_\tau:=X-\left(X\cdot\nu\right)\nu$. Then
			for $\kappa>0, t\in[0,1]$ the perturbed $p$-capacity
			is twice differentiable and the following formula holds:
			\begin{equation}\label{e:second derivative p-capacity}
			\begin{aligned}
				\frac{1}{p}c''_\kappa(t)&=\int_{\partial\Omega_t}{(\kappa^2+\vert\nabla u_{\kappa,t}\vert^2)^\frac{p-2}{2}(\nabla\dot{u}_{\kappa,t}\cdot\nu)\dot{u}_{\kappa,t}}d\mathcal{H}^{N-1}\\
				&+(p-2)\int_{\partial\Omega_t}{(\kappa^2+\vert\nabla u_{\kappa,t}\vert^2)^\frac{p-4}{2}
				(\nabla\dot{u}_{\kappa,t}\cdot\nabla u_{\kappa,t})(\nabla\dot{u}_{\kappa,t}\cdot\nu)}\dot{u}_{\kappa,t}d\mathcal{H}^{N-1}\\
				&-\int_{\partial\Omega_t}{(\kappa^2+\vert\nabla u_{\kappa,t}\vert^2)^\frac{p-2}{2}(\nabla^2 u_{\kappa,t}[\nabla u_{\kappa,t}]\cdot X_\tau)(X\cdot\nu)}d\mathcal{H}^{N-1}\\
				&-(p-2)\int_{\partial\Omega_t}{(\kappa^2+\vert\nabla u_{\kappa,t}\vert^2)^\frac{p-4}{2}\vert\nabla u_{\kappa,t}\vert^2(\nabla^2 u_{\kappa,t}[\nabla u_{\kappa,t}]\cdot X_\tau)(X\cdot\nu)}d\mathcal{H}^{N-1}\\
				&+\int_{\partial\Omega_t}{\vert\nabla u_{\kappa,t}\vert^2(X\cdot\nu)^2(\kappa^2+\vert\nabla u_{\kappa,t}\vert^2)^\frac{p-2}{2}\mathscr{H}_{\Omega_t^c}}d\mathcal{H}^{N-1}.
			\end{aligned}	
			\end{equation}
			Moreover, 
			\begin{equation*}
			\begin{split}
				\left(\frac{p-1}{N-p}\right)^{p-2}\frac{1}{p}c''_0(0&)=-(N-1)\int_{\partial B}{\dot{u}_0^2}d\mathcal{H}^{N-1}\\
				&+\int_{B^c}{\vert x\vert^{(p-2)\left(\frac{p-N}{p-1}-1\right)}\left(\vert\nabla\dot{u}_0\vert^2+(p-2)(\theta\cdot\nabla\dot{u}_0)^2\right)}dx,
			\end{split}	
			\end{equation*}
			\end{prop}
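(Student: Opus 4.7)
My plan is to proceed in two stages: first, a direct shape-calculus derivation of \eqref{e:second derivative p-capacity} in the non-degenerate regime $\kappa>0$; then, a passage to the limit $\kappa\to 0$ at $t=0$ using the $C^{2,\gamma}$ convergence of Theorem \ref{thm:convergence of u_kappa to u_0}. For the first stage I would rewrite the first derivative as a single boundary integral
\[
c'_\kappa(t)=\int_{\partial\Omega_t} F_\kappa(|\nabla u_{\kappa,t}|^2)\,(X\cdot\nu)\,d\mathcal{H}^{N-1},\qquad F_\kappa(s):=(\kappa^2+s)^{p/2}-\kappa^p-ps(\kappa^2+s)^{(p-2)/2},
\]
and differentiate once more in $t$ by Hadamard's formula (cf.\ \cite[Ch.~5]{HP}). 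This produces three kinds of contributions: the material-derivative part coming from $\partial_t|\nabla u_{\kappa,t}|^2=2\nabla u_{\kappa,t}\cdot\nabla\dot u_{\kappa,t}$, which after using $\nabla u_{\kappa,t}=|\nabla u_{\kappa,t}|\nu$ on $\partial\Omega_t$ (Remark \ref{rem:pcap eq for pert energy and direction of the norm}) and the boundary condition $\dot u_{\kappa,t}=-\nabla u_{\kappa,t}\cdot X$ collapses into the first two surface integrals of \eqref{e:second derivative p-capacity}; the convective part $\nabla|\nabla u_{\kappa,t}|^2\cdot X=2\nabla^2 u_{\kappa,t}[\nabla u_{\kappa,t}]\cdot X$, which splits along $X=X_\tau+(X\cdot\nu)\nu$ and, after absorbing the normal component into the mean-curvature contribution via the pointwise PDE for $u_{\kappa,t}$ on $\partial\Omega_t$, gives the two Hessian integrals with $X_\tau$; and the geometric contribution from differentiating the surface measure, which yields the $\mathscr{H}_{\Omega_t^c}$-term.

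For the second stage, the $C^{2,\gamma}$ convergence $u_{\kappa,0}\to u_0$ in a compact neighborhood of $\partial B_1$ from Theorem \ref{thm:convergence of u_kappa to u_0} lets me pass to the limit $\kappa\to 0$ in each of the five integrals of \eqref{e:second derivative p-capacity} specialized to $t=0$. Using $u_0=|x|^{(p-n)/(p-1)}$, a direct computation gives $\nabla^2 u_0[\nabla u_0]=a^2(a-1)|x|^{2a-4}x$ with $a=(p-n)/(p-1)$; hence $\nabla^2 u_0[\nabla u_0]\parallel\theta$, orthogonal to the tangential field $X_\tau$, so the two Hessian integrals vanish. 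To handle the first two surface terms I multiply the limiting linearized equation
\[
\div\bigl(|\nabla u_0|^{p-2}\nabla\dot u_0+(p-2)|\nabla u_0|^{p-4}(\nabla u_0\cdot\nabla\dot u_0)\nabla u_0\bigr)=0\quad\text{in }B_1^c
\]
by $\dot u_0$ and integrate by parts over $B_1^c$; the resulting boundary integral matches (up to the multiplier $p-1$, using $\nabla u_0\parallel\nu$) the first two surface terms of \eqref{e:second derivative p-capacity}, while the bulk side is
\[
\int_{B_1^c}\bigl(|\nabla u_0|^{p-2}|\nabla\dot u_0|^2+(p-2)|\nabla u_0|^{p-4}(\nabla u_0\cdot\nabla\dot u_0)^2\bigr)\,dx.
\]
Substituting $|\nabla u_0|^{p-2}=\bigl(\tfrac{n-p}{p-1}\bigr)^{p-2}|x|^{(p-2)((p-n)/(p-1)-1)}$ and $\nabla u_0\cdot\nabla\dot u_0=|\nabla u_0|(\theta\cdot\nabla\dot u_0)$ turns this into $\bigl(\tfrac{n-p}{p-1}\bigr)^{p-2}$ times the weighted Dirichlet integral in the statement. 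In parallel, the mean-curvature boundary term combined with $(X\cdot\nu)^2=\dot u_0^2/|\nabla u_0|^2$ produces the $\int_{\partial B}\dot u_0^2$ contribution. Multiplying through by $((p-1)/(N-p))^{p-2}$ finally clears the $|\nabla u_0|^{p-2}$ constants and yields the displayed identity.

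The hard part is the bookkeeping in the first stage: a naive Hadamard differentiation produces many terms that reorganize into the compact form \eqref{e:second derivative p-capacity} only after systematic use of $\nabla u_{\kappa,t}\parallel\nu$ on the moving boundary and the pointwise PDE there to re-express $\nabla^2 u_{\kappa,t}[\nu]\cdot\nu$ in terms of $\mathscr{H}_{\Omega_t}$ and $|\nabla u_{\kappa,t}|$. The limit $\kappa\to 0$ is then essentially soft, provided one has the uniform $C^{2,\gamma}$ estimates of Theorem \ref{thm:convergence of u_kappa to u_0} and uses the uniform lower bound $|\nabla u_0|\geq c>0$ near $\partial B_1$ (from the explicit form of $u_0$) to keep the degenerate operator non-singular there.
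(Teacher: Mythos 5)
Your Stage 1 plan — Hadamard differentiation of the boundary-integral form of $c'_\kappa(t)$ — is a genuinely different route from the paper, which differentiates the \emph{volume}-integral form of \eqref{e:first derivative p-capacity} (the second equality there) and then applies the divergence theorem and the pointwise boundary identity \eqref{e:divergence on the boundary}. Both can be made to work; the paper's route trades the messier Hadamard formula for surface integrals (which produces tangential-divergence and curvature terms that must be resorted) for a single, routine volume-Hadamard step followed by a clean divergence-theorem manipulation. Your organization of the resulting terms (material part, convective part split along $X=X_\tau+(X\cdot\nu)\nu$, metric part) is plausible, and your observation that the Hessian integrals vanish at $\kappa=0$, $t=0$ because $\nabla^2 u_0[\nabla u_0]\parallel\theta$ is correct and is exactly how the paper gets rid of them. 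Likewise your integration-by-parts identity relating $(p-1)\int_{\partial B}|\nabla u_0|^{p-2}(\nabla\dot u_0\cdot\nu)\dot u_0$ to the bulk weighted Dirichlet form is the right algebraic mechanism.

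The genuine gap is in Stage 2, which you describe as ``essentially soft'' but which occupies the larger half of the paper's proof. Two distinct issues are conflated and neither is addressed. First, the five integrals in \eqref{e:second derivative p-capacity} are written in terms of $\dot u_{\kappa,t}$ and $\nabla\dot u_{\kappa,t}$, not of $u_{\kappa,t}$; the $C^{2,\gamma}$ convergence of $u_{\kappa,0}$ given by Theorem \ref{thm:convergence of u_kappa to u_0} does \emph{not} by itself yield any convergence of $\dot u_{\kappa,0}$. The paper has to derive equiboundedness of $\dot u_{\kappa,t}\circ\Phi_t$ in $C^{1,\gamma}$ on compacts from the linearized equation \eqref{e:eq for u dot pcap} (using the non-degeneracy $|\nabla u_{\kappa,t}|\ge c>0$ near $\partial B$) before any limit passage is legitimate. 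Second, your integration by parts on the unbounded domain $B^c$ is only valid once one knows that the limit $\hat w$ of $\dot u_{\kappa,t}\circ\Phi_t$ belongs to the weighted space $D^{1,2}(B^c,\mu)$, satisfies the limiting weak formulation against \emph{all} test functions in $D^{1,2}_0(B^c,\mu)$ (not merely compactly supported ones), and is the unique such solution (so the whole family converges, not just a subsequence). Establishing this requires the energy bound on $\int(\kappa^2+|\nabla u_{\kappa,t}|^2)^{(p-2)/2}|\nabla\dot u_{\kappa,t}|^2$, then the cutoff/Poincaré argument (the ``Claim'' in the paper's proof) to control the boundary term at infinity; otherwise the identity you want, bulk integral $=$ $(p-1)\int_{\partial B}|\nabla u_0|^{p-2}(\nabla\dot u_0\cdot\nu)\dot u_0$, simply has an uncontrolled contribution at infinity. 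Your proposal names none of this, and ``uses the uniform lower bound $|\nabla u_0|\geq c>0$ near $\partial B_1$'' controls only the behavior near $\partial B$, not at infinity, where $|\nabla u_0|\to 0$ and the weight $|x|^{(p-2)(\frac{p-N}{p-1}-1)}$ degenerates.
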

			where $\dot{u}_0$ solves
			\begin{equation}\label{e:el for u dot}
			\begin{cases}
				\div\left(\vert x\vert^{(p-2)(\frac{p-N}{p-1}-1)}\nabla \dot{u}_0+(p-2)\vert x\vert^{(p-2)(\frac{p-N}{p-1}-1)}(\theta\cdot\nabla\dot{u}_0)\theta\right)=0\text{ in }B^c,\\
				\dot{u}_0=\frac{N-p}{p-1}\theta\cdot X\text{ on }\partial B
			\end{cases}
			\end{equation}
			in $W^{1,2}(B^c,d\mu)$.
			\begin{proof}

			{\bf Computation.}			
			First we use Hadamard's formula to differentiate
			the second equality of \eqref{e:first derivative p-capacity}.
			Using then the divergence theorem, we get
			\begin{equation*}
			\begin{split}
				&c_\kappa''(t)=\int_{\partial\Omega_t}{p\left(\kappa^2+\vert\nabla u_{\kappa,t}\vert^2\right)^{(p-2)/2}\left(\nabla u_{\kappa,t}\cdot\nabla\dot{u}_{\kappa,t}\right)\left(X\cdot\nu\right)}\,d\mathcal{H}^{N-1}\\
				&+\int_{\partial\Omega_t}{\div\left(\left((\kappa^2+\vert\nabla u_{\kappa,t}\vert^2)^\frac{p}{2}-\kappa^p\right)X\right)\left(X\cdot\nu\right)}\,d\mathcal{H}^{N-1}\\
				&-p\int_{\partial\Omega_t}{\left((p-2)\kappa^2+\vert\nabla u_{\kappa,t}\vert^2\right)^{(p-4)/2}\vert\nabla u_{\kappa,t}\vert^2\left(\nabla u_{\kappa,t}\cdot\nabla\dot{u}_{\kappa,t}\right)\left(X\cdot\nu\right)}\,d\mathcal{H}^{N-1}\\
				&-2p\int_{\partial\Omega_t}{2\left(\kappa^2+\vert\nabla u_{\kappa,t}\vert^2\right)^{(p-2)/2}\left(\nabla u_{\kappa,t}\cdot\nabla\dot{u}_{\kappa,t}\right)\left(X\cdot\nu\right)}\,d\mathcal{H}^{N-1}\\
				&-p\int_{\partial\Omega_t}{\div\left(\left(\kappa^2+\vert\nabla u_{\kappa,t}\vert^2\right)^{(p-2)/2}\vert\nabla u_{\kappa,t}\vert^2 X\right)\left(X\cdot\nu\right)}\,d\mathcal{H}^{N-1}.\\				
			\end{split}	
			\end{equation*}
			Using \eqref{e:eq for u dot pcap}, and the fact that $X$ is divergence-free, we obtain
			\begin{equation}\label{e:second derivative - first equation}
			\begin{split}
				&c_\kappa''(t)=p\int_{\partial\Omega_t}{\left(\kappa^2+\vert\nabla u_{\kappa,t}\vert^2\right)^{(p-2)/2}\dot{u}_{\kappa,t}\left(\nabla \dot{u}_{\kappa,t}\cdot\nu\right)}\,d\mathcal{H}^{n-1}\\
				&-\int_{\partial\Omega_t}{p\left(\kappa^2+\vert\nabla u_{\kappa,t}\vert^2\right)^{(p-2)/2}\left(\nabla^2 u_{\kappa,t}[\nabla u_{\kappa,t}]\cdot X\right)\left(X\cdot\nu\right)}\,d\mathcal{H}^{n-1}\\
				&+p\int_{\partial\Omega_t}{\left((p-2)\left(\kappa^2+\vert\nabla u_{\kappa,t}\vert^2\right)^{(p-4)/2}\vert\nabla u_{\kappa,t}\vert^2\right)\dot{u}_{\kappa,t}\left(\nabla \dot{u}_{\kappa,t}\cdot\nu\right)}\\
				&-p(p-2)\int_{\partial\Omega_t}{\left(\kappa^2+\vert\nabla u_{\kappa,t}\vert^2\right)^{(p-4)/2}\vert\nabla u_{\kappa,t}\vert^2\left(\nabla^2 u_{\kappa,t}[\nabla u_{\kappa,t}]\cdot X\right)\left(X\cdot\nu\right)}\,d\mathcal{H}^{n-1}.				
			\end{split}	
			\end{equation}												
			We now use Remark \ref{rem:pcap eq for pert energy and direction of the norm} to get
			the following equality on the boundary
			\begin{equation}\label{e:divergence on the boundary}
			\begin{aligned}
				0&=\div\left((\kappa^2+\vert\nabla u_{\kappa,t}\vert^2)^\frac{p-2}{2}\nabla u_{\kappa,t}\right)\\
				&=(\kappa^2+\vert\nabla u_{\kappa,t}\vert^2)^\frac{p-2}{2}\vert\nabla u_{\kappa,t}\vert\mathscr{H}_{\Omega_t^c}+(\kappa^2+\vert\nabla u_{\kappa,t}\vert^2)^\frac{p-2}{2}\nabla^2 u_{\kappa,t}[\nu]\cdot\nu\\
				&+(p-2)(\kappa^2+\vert\nabla u_{\kappa,t}\vert^2)^\frac{p-4}{2}\vert\nabla u_{\kappa,t}\vert^2\nabla^2 u_{\kappa,t}[\nu]\cdot\nu.
			\end{aligned}	
			\end{equation}
			Now we plug \eqref{e:divergence on the boundary} into
			\eqref{e:second derivative - first equation} and get
			\eqref{e:second derivative p-capacity}.
			
			\noindent						
			{\bf Convergence.}
			Fix $R>1$. By Schauder estimates functions $u_{\kappa,t}\circ\Phi_t$
			are equibounded in $C^{2,\gamma}(B_R\backslash B)$
			and $\vert\nabla u_{\kappa,t}\vert\in (c(R), C(R))$
			for $\kappa$, $t$ small.
			Thus, from \eqref{e:eq for u dot pcap}, using classical elliptic
			estimates we get that $\dot{u}_{\kappa,t}\circ\Phi_t$ are equibounded in $C^{1,\gamma}(B_R\backslash B)$ and up to a subsequence converge to a function 
			$\hat{w}\in C^1(B^c)$ uniformly on compacts.
			
			Using $\dot{u}_{\kappa,t}$ as a test function in \eqref{e:eq for u dot pcap} and applying
			divergence theorem, we get
			\begin{equation*}
			\begin{split}
			&\int_{\Omega_t^c}{(\kappa^2+\vert\nabla u_{\kappa,t}\vert^2)^\frac{p-2}{2}\left\vert\nabla \dot{u}_{\kappa,t}\right\vert^2}dx
			+(p-2)\int_{\Omega_t^c}{(\kappa^2+\vert\nabla u_{\kappa,t}\vert^2)^\frac{p-4}{2}(\nabla u_{\kappa,t}\cdot\nabla\dot{u}_{\kappa,t})^2}dx\\
			&=\int_{\partial\Omega_t^c}{(\kappa^2+\vert\nabla u_{\kappa,t}\vert^2)^\frac{p-2}{2}\dot{u}_{\kappa,t}\nabla \dot{u}_{\kappa,t}\cdot\nu}\\
			&\qquad+\int_{\partial\Omega_t^c}{(p-2)(\kappa^2+\vert\nabla u_{\kappa,t}\vert^2)^\frac{p-4}{2}(\nabla u_{\kappa,t}\cdot\nabla\dot{u}_{\kappa,t})\dot{u}_{\kappa,t}\nabla u_{\kappa,t}\cdot\nu}\leq C.
			\end{split}
			\end{equation*}
			That means that $\hat{w} \in D^{1,2}(B^c,\mu)$.		
			Passing to the limit in \eqref{e:eq for u dot pcap} as $(\kappa,t)\rightarrow (0,0)$, we get
			\begin{equation}\label{e:pcap eq for w hat}
				\int_{B^c}{\nabla\hat{w}\cdot\nabla v+(p-2)(\theta\cdot\nabla\hat{w})(\theta\cdot\nabla v)}d\mu=0
			\end{equation}
			for any $v\in W^{1,2}(B_R\backslash\overline{B})$
			with compact support in $B_R\backslash\overline{B}$.
			
			It remains to show that the same identity holds for every 
			$v$ in $D^{1,2}_0(B^c;\mu)$ and that $\hat{w}$ is the unique
			solution of the equation \eqref{e:pcap eq for w hat} in $D^{1,2}(B^c;\mu)$ with a
			proper boundary condition. To that end, we fix $R>1$
			and a cut-off function $\eta_R$ such that $\eta_R\equiv 1$
			in $B_R\backslash B$, $\eta_R\equiv 0$ in $B_{2R}^c$ and 
			$\vert\nabla\eta_R\vert\leq C/R$. 
			We also fix a constant $c$ that we specify later, it will depend on $R$.
			For a function $v\in D^{1,2}_0(B^c;\mu)$ we plug $(v-c)\eta_R$ into \eqref{e:pcap eq for w hat} to get
			\begin{equation*}
			\begin{split}
				&\int_{B^c}{\eta_R\nabla\hat{w}\cdot\nabla v+(p-2)\eta_R(\theta\cdot\nabla\hat{w})(\theta\cdot\nabla v)}d\mu\\
				&=\int_{B^c}{\eta_R\nabla\hat{w}\cdot\nabla (v-c)+(p-2)\eta_R(\theta\cdot\nabla\hat{w})(\theta\cdot\nabla (v-c))}d\mu\\
				&=-\left(\int_{B^c}{(v-c)\nabla\hat{w}\cdot\nabla \eta_R+(p-2)(v-c)(\theta\cdot\nabla\hat{w})(\theta\cdot\nabla\eta_R)}d\mu\right).
			\end{split}	
			\end{equation*}
			
			\noindent
			{\bf Claim:}
$				\int_{B^c}{(v-c)\nabla\hat{w}\cdot\nabla \eta_R+(p-2)(v-c)(\theta\cdot\nabla\hat{w})(\theta\cdot\nabla\eta_R)}d\mu\rightarrow 0\text{ as }R\rightarrow \infty.
$

			\noindent
			Indeed, we have
			\begin{equation*}
			\begin{split}
				&\left\vert\int_{B^c}{(v-c)\nabla\hat{w}\cdot\nabla \eta_R+(p-2)(v-c)(\theta\cdot\nabla\hat{w})(\theta\cdot\nabla\eta_R)}d\mu\right\vert\\
				&=\left\vert\int_{B_{2R}\setminus B_R}{\left((v-c)\nabla\hat{w}\cdot\nabla \eta_R+(p-2)(v-c)(\theta\cdot\nabla\hat{w})(\theta\cdot\nabla\eta_R)\right)\vert x\vert^{\left(\frac{p-N}{p-1}-1\right)(p-2)}}\,dx\right\vert\\
				&\leq (p-1)\frac{C}{R}\int_{B_{2R}\setminus B_R}{\vert v-c\vert\vert\nabla\hat{w}\vert\vert x\vert^{\left(\frac{p-N}{p-1}-1\right)(p-2)}}\,dx\\
				&\leq\frac{C}{R}\left(\int_{B_{2R}\setminus B_R}{\vert\nabla\hat{w}\vert^2\vert x\vert^{\left(\frac{p-N}{p-1}-1\right)(p-2)}}\,dx\right)^{1/2}
				\left(\int_{B_{2R}\setminus B_R}{\vert v-c\vert^2\vert x\vert^{\left(\frac{p-N}{p-1}-1\right)(p-2)}}\,dx\right)^{1/2}. 	
			\end{split}
			\end{equation*}
			Since we know that $w\in D^{1,2}(B^c;\mu)$, if we manage to 
			show that 
			\begin{equation}\label{e:ineq for the test function}
			\frac{1}{R}\left(\int_{B_{2R}\setminus B_R}{\vert v-c\vert^2\vert x\vert^{\left(\frac{p-N}{p-1}-1\right)(p-2)}}\,dx\right)^{1/2}\leq C
			\end{equation} 
			for some $C=C(N,p)$, the claim will be proven. For convenience we denote $\gamma:=\left(\frac{p-N}{p-1}-1\right)(p-2)$. 
			We now choose $c=\fint_{B_{2R}\setminus B_R}{v}$ and use Poincar\'e inequality to get
			\begin{equation}\label{e:poincare for test function}
			\begin{split}
			\int_{B_{2R}\setminus B_R}{\vert v-c\vert^2}\,dx=
			\int_{B_{2R}\setminus B_R}{\left\vert v-\fint_{B_{2R}\setminus B_R}{v}\right\vert^2}\,dx
			\leq CR^{2}\int_{B_{2R}\setminus B_R}{\left\vert \nabla v\right\vert^2}\,dx\\
			=CR^{2-\gamma}\int_{B_{2R}\setminus B_R}{\left\vert \nabla v\right\vert^2 R^\gamma}\,dx
			\leq CR^{2-\gamma},
			\end{split}
			\end{equation}
			where in the last inequality we used that $v\in D^{1,2}(B^c;\mu)$.
			This yields \eqref{e:ineq for the test function}, since
			\begin{equation*}
			\begin{split}
			\frac{1}{R}\left(\int_{B_{2R}\setminus B_R}{\vert v-c\vert^2\vert x\vert^\gamma}\,dx\right)^{1/2}
			\leq C R^{-1+\gamma/2}\left(\int_{B_{2R}\setminus B_R}{\vert v-c\vert^2}\,dx\right)^{1/2}
			\leq C,
			\end{split}
			\end{equation*}
			where in the last inequality we applied \eqref{e:poincare for test function}. Thus we proved the claim
			and so \eqref{e:pcap eq for w hat} holds for every $v\in D_0^{1,2}(B^c;\mu)$. The boundary condition is 
			the same as in \eqref{e:el for u dot} since $\dot{u}_{\kappa,t}\circ \Phi_t$ are converging to $\hat{w}$ regularly
			on $\partial B$. So, $\hat{w}$ solves the Dirichlet problem \eqref{e:el for u dot} and by uniqueness 
			the whole sequence $\dot{u}_{\kappa,t}\circ \Phi_t$ converges to $\hat{w}$.

		\end{proof}									

		\begin{lemma}\label{l:continuity of the second derivative p cap}
			There exists a modulus of continuity $\omega$ such that 
			\begin{equation*}
				\vert c''_\kappa(t)-c''_\kappa(0)\vert\leq\omega(\Vert\varphi\Vert_{C^{2,\alpha}}+\kappa)\Vert X\cdot\nu\Vert^2_{H^{1/2}(\partial B)}.
			\end{equation*}
		\end{lemma}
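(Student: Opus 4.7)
My plan is to start from the explicit formula \eqref{e:second derivative p-capacity} and pull both $c''_\kappa(t)$ and $c''_\kappa(0)$ back to integrals over the fixed sphere $\partial B_1$ using the flow $\Phi_t$ from Lemma \ref{lm:vectorfield}. Under this change of variables each of the five terms in \eqref{e:second derivative p-capacity} becomes an integral on $\partial B_1$ whose integrand factorises as a coefficient $F_{\kappa,t}$---built from $u_{\kappa,t}\circ\Phi_t$ and its first two derivatives, from $\nabla\Phi_t$, $\det\nabla\Phi_t$, $\psi_t$, and from the pulled-back normal and mean curvature of $\partial\Omega_t$---multiplied by a quadratic expression in $X$, either pointwise in $X\cdot\nu_{B_1}$ and $X_\tau$ or bilinear in $\dot u_{\kappa,t}$, $\nabla\dot u_{\kappa,t}$ (both of which depend linearly on $X$ through the Dirichlet datum in \eqref{e:eq for u dot pcap}). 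The difference $c''_\kappa(t)-c''_\kappa(0)$ then splits into pieces of the form $(F_{\kappa,t}-F_{\kappa,0})\cdot Q(X)$ and $F_{\kappa,0}\cdot(Q_t(X)-Q_0(X))$, and the goal in each case is to bound the result by $\omega(\Vert\varphi\Vert_{C^{2,\alpha}}+\kappa)\Vert X\cdot\nu\Vert^2_{H^{1/2}(\partial B_1)}$.

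The coefficient differences are controlled in $C^0(\partial B_1)$ by combining Theorem \ref{thm:convergence of u_kappa to u_0}, which provides the uniform $C^{2,\gamma}$ convergence $u_{\kappa,t}\circ\Phi_t\to u_0$ on $B_R\setminus B_1$, with the uniform bounds $\Vert\Phi_t-\mathrm{id}\Vert_{C^{2,\gamma}}+\Vert\psi_t\Vert_{C^{2,\gamma}}\le\omega(\Vert\varphi\Vert_{C^{2,\alpha}})$ from Lemma \ref{lm:vectorfield}. Pointwise quadratic factors of the form $(X\cdot\nu_{B_1})^2$ are directly bounded by $\Vert X\cdot\nu\Vert^2_{L^2(\partial B_1)}\le\Vert X\cdot\nu\Vert^2_{H^{1/2}(\partial B_1)}$, while the mixed pieces $(X_\tau\cdot v)(X\cdot\nu)$ are handled after observing that $X_\tau$ on $\partial B_1$ is controlled by $X\cdot\nu$: indeed $X$ is autonomous, divergence-free near $\partial B_1$ and built smoothly from $\varphi$, so $\Vert X\Vert_{C^{2,\gamma}}$ is comparable to $\Vert\varphi\Vert_{C^{2,\gamma}}$, and the second item of Lemma \ref{lm:vectorfield}(iii) ensures that $\Vert\varphi\Vert_{H^{1/2}}$ is comparable to $\Vert X\cdot\nu_{B_1}\Vert_{H^{1/2}(\partial B_1)}$ up to a lower-order error.

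The genuinely analytic piece is the bilinear term $\int_{\partial\Omega_t}(\kappa^2+\vert\nabla u_{\kappa,t}\vert^2)^{\frac{p-2}{2}}\dot u_{\kappa,t}(\nabla\dot u_{\kappa,t}\cdot\nu)\,d\mathcal H^{N-1}$ together with its $(p-2)$-counterpart. Testing \eqref{e:eq for u dot pcap} against $\dot u_{\kappa,t}$ and applying the divergence theorem---exactly as in the convergence step of the proof of Proposition \ref{prop:second derivative p-capacity}---these boundary integrals equal the weighted Dirichlet energy
\begin{equation*}
\int_{\Omega_t^c}(\kappa^2+\vert\nabla u_{\kappa,t}\vert^2)^{\frac{p-2}{2}}\vert\nabla\dot u_{\kappa,t}\vert^2\,dx+(p-2)\int_{\Omega_t^c}(\kappa^2+\vert\nabla u_{\kappa,t}\vert^2)^{\frac{p-4}{2}}(\nabla u_{\kappa,t}\cdot\nabla\dot u_{\kappa,t})^2\,dx.
\end{equation*}
Since the Dirichlet datum is $-\vert\nabla u_{\kappa,t}\vert(X\cdot\nu)$ and $\vert\nabla u_{\kappa,t}\vert$ is $C^{2,\gamma}$-close to $\tfrac{N-p}{p-1}$, a trace/extension argument combined with $C^{2,\gamma}$ control of $\Phi_t$ bounds this energy by $C\Vert X\cdot\nu\Vert^2_{H^{1/2}(\partial B_1)}$, uniformly in $\kappa,t\in[0,1]$, while the $C^{2,\gamma}$ convergence of $\dot u_{\kappa,t}\circ\Phi_t$ to $\dot u_{\kappa,0}$---again from Proposition \ref{prop:second derivative p-capacity}---provides the required modulus on the difference between the energies at $t$ and at $0$. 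The main obstacle is producing this estimate uniformly in $\kappa\in[0,1]$: the limiting operator in \eqref{e:el for u dot} is degenerate, so the $H^{1/2}$-to-Dirichlet-energy bound must be derived in the weighted space $D^{1,2}(B^c,\mu)$ in a way that matches the nondegenerate bounds for $\kappa>0$, which is precisely what the weight $d\mu=\vert x\vert^{(\frac{p-N}{p-1}-1)(p-2)}\,dx$ is designed to encode.
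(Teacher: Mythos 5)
Your decomposition into pulled-back integrals on the fixed sphere, your treatment of the boundary terms using $\vert X_\tau\circ\Phi_t\vert\leq\omega(\Vert\varphi\Vert_{C^{2,\alpha}})\vert X\cdot\nu_B\vert$ (available because $X$ is parallel to $\theta$ near $\partial B_1$), and your conversion of the $\dot u(\nabla\dot u\cdot\nu)$ boundary integrals into the weighted bulk Dirichlet energy by testing \eqref{e:eq for u dot pcap} against $\dot u_{\kappa,t}$ all agree with the paper. The gap lies in how you propose to bound the resulting difference of bulk energies $\vert I_1(t)-I_1(0)\vert$.

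You invoke ``the $C^{2,\gamma}$ convergence of $\dot u_{\kappa,t}\circ\Phi_t$ to $\dot u_{\kappa,0}$ from Proposition \ref{prop:second derivative p-capacity},'' but what is established there is $C^{1,\gamma}$ equiboundedness and $C^1_{\mathrm{loc}}$ subsequential convergence (to the limit $\hat w$, not to $\dot u_{\kappa,0}$), and it is qualitative: for one fixed $\varphi$ (hence fixed $X$) as $(\kappa,t)\to(0,0)$. It cannot produce a modulus depending only on $\Vert\varphi\Vert_{C^{2,\alpha}}+\kappa$ and holding uniformly over all nearly spherical sets. More to the point, since $\dot u_{\kappa,t}$ depends linearly on $X$, what you actually need is an operator-norm statement of the form
\begin{equation*}
\Vert\dot u_{\kappa,t}\circ\Phi_t-\dot u_{\kappa,0}\Vert_{D^{1,2}(B^c,\mu)}\leq\omega(\Vert\varphi\Vert_{C^{2,\alpha}}+\kappa)\,\Vert X\cdot\nu\Vert_{H^{1/2}(\partial B)},
\end{equation*}
and any $C^{k,\gamma}$ estimate on $\dot u$ scales with $\Vert X\Vert_{C^{k,\gamma}}$, not with the much weaker $\Vert X\cdot\nu\Vert_{H^{1/2}}$, so it simply does not close. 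You identify the ``main obstacle'' as the degeneracy of the limiting operator at $\kappa=0$; the weighted space does address that, but the harder issue, which your sketch leaves untouched, is uniformity in $\varphi$.

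The paper resolves this by a normalization-and-compactness argument absent from your proposal: assume for contradiction sequences $\kappa_j\to 0$, $t_j\to t$, $\varphi_j\to 0$ in $C^{2,\alpha}$ violating the bound, normalize $\tilde w_j:=w_{\kappa_j,t_j,\varphi_j}/\Vert X_j\cdot\nu_B\Vert_{H^{1/2}(\partial B)}$ and $\tilde w_{0,j}$ analogously, show $\tilde w_j-\tilde w_{0,j}\to 0$ strongly in $H^{1/2}(\partial B)$ and, via a cut-off together with the harmonic extension $z_j$ of the boundary difference, strongly in $W^{1,2}(B_R\setminus B)$, and deduce $L_{\kappa_j,t_j,\varphi_j}(\tilde w_j,\tilde w_j)-L_{\kappa_j,0,\varphi_j}(\tilde w_{0,j},\tilde w_{0,j})\to 0$, contradicting \eqref{e:limits of bilinear forms unequal}. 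It is this compactness step, not pointwise $C^{k,\gamma}$ convergence of the shape derivatives, that produces the uniform modulus in the statement.
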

		\begin{proof}
			By divergence theorem and \eqref{e:eq for u dot pcap},  using change of variables
			we can rewrite the second derivative of the energy in the following way:
			\begin{equation*}
				\frac{1}{p}c''_\kappa(t)=I_1(t)+I_2(t)+I_3(t),
			\end{equation*}
			where
			\begin{equation*}
			\begin{split}
				&I_1(t):=\int_{B}{\left((\kappa^2+\vert\nabla u_{\kappa,t}\vert^2)^\frac{p-2}{2}\vert\nabla\dot{u}_{\kappa,t}\vert^2\right)\circ \Phi_t\,\mathrm{det}\nabla\Phi_t}dx\\
				&+(p-2)\int_{B}{\left((\kappa^2+\vert\nabla u_{\kappa,t}\vert^2)^\frac{p-4}{2}
				(\nabla\dot{u}_{\kappa,t}\cdot\nabla u_{\kappa,t})^2\right)\circ \Phi_t\,\mathrm{det}\nabla\Phi_t}dx,\\
				&I_2(t):=-\int_{\partial B}{\left((\kappa^2+\vert\nabla u_{\kappa,t}\vert^2)^\frac{p-2}{2}(\nabla^2 u_{\kappa,t}[\nabla u_{\kappa,t}]\cdot X_\tau)(X\cdot\nu)\right)\circ\Phi_t\,J^{\partial B}\Phi_t}d\mathcal{H}^{N-1}\\
				&-(p-2)\int_{\partial B}{\left((\kappa^2+\vert\nabla u_{\kappa,t}\vert^2)^\frac{p-4}{2}\vert\nabla u_{\kappa,t}\vert^2(\nabla^2 u_{\kappa,t}[\nabla u_{\kappa,t}]\cdot X_\tau)(X\cdot\nu)\right)\circ\Phi_t\,J^{\partial B}\Phi_t}d\mathcal{H}^{N-1},\\
				&I_3(t):=\int_{\partial B}{\left(\vert\nabla u_{\kappa,t}\vert^2(X\cdot\nu)^2(\kappa^2+\vert\nabla u_{\kappa,t}\vert^2)^\frac{p-2}{2}\mathscr{H}_{\Omega_t^c}\right)\circ\Phi_t\,J^{\partial B}\Phi_t}d\mathcal{H}^{N-1}.	
			\end{split}
			\end{equation*}
		By Lemma \ref{lm:vectorfield}, we have
		\begin{equation*}
			\Vert\mathscr{H}_{\partial\Omega_t}\circ\Phi_t-\mathscr{H}_{\partial B}\Vert_{L^\infty(\partial B)}+\Vert J_{n-1}\Phi_t-1\Vert_{L^\infty(\partial B)}+\Vert\mathrm{det}\nabla\Phi_t-1\Vert_{L^\infty(\partial B)}\leq\omega\left(\Vert\varphi\Vert_{C^{2,\alpha}}\right).
		\end{equation*}		
		In addition, by Lemma \ref{lm:vectorfield}, $X$ is parallel to
		$\theta$ in a neighborhood of $\partial B$, so we have	
		\begin{equation*}
			\vert\left(X\cdot\nu_{\Omega_t}\right)\circ\Phi_t-X\cdot\nu_B
			\vert
			\leq\omega\left(\Vert\varphi\Vert_{C^{2,\alpha}}\right)\vert X\cdot\nu_B\vert,
		\end{equation*}
		as well as		
		\begin{equation*}
			\vert X_\tau\circ\Phi_t\vert
			\leq\omega\left(\Vert\varphi\Vert_{C^{2,\alpha}}\right)\vert X\cdot\nu_B\vert.
		\end{equation*}
		Thus, using Theorem \ref{thm:convergence of u_kappa to u_0}
		and noticing that $I_2(0)=0$, we get
		\begin{equation*}
			\vert I_2(t)-I_2(0)\vert+\vert I_3(t)-I_3(0)\vert
			\leq\omega\left(\Vert\varphi\Vert_{C^{2,\alpha}}+\kappa\right)\Vert X\cdot\nu_B\Vert^2_{L^2(\partial B)}.
		\end{equation*}
		It remains to show that
		\begin{equation}\label{e:I_1 bound}
			\vert I_1(t)-I_1(0)\vert\leq\omega(\Vert\varphi\Vert_{C^{2,\alpha}}+\kappa)\Vert X\cdot\nu\Vert^2_{H^{1/2}(\partial B)}.
		\end{equation}
		We are going to sketch the proof of \eqref{e:I_1 bound}, for more details see
		the proof of \cite[Lemma 2.7]{FZ}.
		We first move the equation for $\dot{u}_{\kappa,t}$ onto the unit ball $B$. To that end, we denote $w_{\kappa,t}:=\dot{u}_{\kappa,t}\circ\Phi_t$,
		$\tilde{u}_{\kappa,t}:=u_{\kappa,t}\circ\Phi_t$, and
		$N_t=\left(\nabla\Phi_{t}\right)^{-1}\left(\left(\nabla\Phi_{t}\right)^{-1}\right)^t$. Then $w_{\kappa,t}$ satisfies
		\begin{equation*}
			\begin{cases}
				\div\big((\kappa^2+\vert\left(\left(\nabla\Phi_{t}\right)^{-1}\right)^t\nabla \tilde{u}_{\kappa,t}\vert^2)^\frac{p-2}{2}\mathrm{det}\nabla\Phi_t N_t\nabla w_{\kappa,t}\\
				+(p-2)(\kappa^2+\vert\left(\left(\nabla\Phi_{t}\right)^{-1}\right)^t\nabla \tilde{u}_{\kappa,t}\vert^2)^\frac{p-4}{2}\mathrm{det}\nabla\Phi_t(N_t\nabla\tilde{u}_{\kappa,t}\cdot\nabla w_{\kappa,t})N_t\nabla w_{\kappa,t}\big)=0\text{ in }B^c,\\
				w_{\kappa,t}=-\left(\nabla u_{\kappa,t}\cdot X\right)\circ\Phi_t\text{ on }\partial B
			\end{cases}
		\end{equation*}
		and
		\begin{equation*}
		\begin{split}
			I_1(t)&:=\int_{B}{(\kappa^2+\vert\left(\left(\nabla\Phi_{t}\right)^{-1}\right)^t\nabla\tilde{u}_{\kappa,t}\vert^2)^\frac{p-2}{2}N_t\nabla w_{\kappa,t}\cdot\nabla w_{\kappa,t}\,\mathrm{det}\nabla\Phi_t}dx\\
			&+(p-2)\int_{B}{(\kappa^2+\vert\left(\left(\nabla\Phi_{t}\right)^{-1}\right)^t\nabla \tilde{u}_{\kappa,t}\vert^2)^\frac{p-4}{2}
				(N_t\nabla\tilde{u}_{\kappa,t}\cdot\nabla w_{\kappa,t})^2\,\mathrm{det}\nabla\Phi_t}dx.	
		\end{split}
		\end{equation*}
		For convenience we define a bilinear form $L_{\kappa,t,\varphi}$ as
		\begin{equation*}
		\begin{split}
			&L_{\kappa,t,\varphi}(u,v):=\int_{B}{(\kappa^2+\vert\left(\left(\nabla\Phi_{t,\varphi}\right)^{-1}\right)^t\nabla\tilde{u}_{\kappa,t,\varphi}\vert^2)^\frac{p-2}{2}N_{t,\varphi}\nabla u\cdot\nabla v\,\mathrm{det}\nabla\Phi_t}dx\\
			&+(p-2)\int_{B}{(\kappa^2+\vert\left(\left(\nabla\Phi_{t,\varphi}\right)^{-1}\right)^t\nabla \tilde{u}_{\kappa,t,\varphi}\vert^2)^\frac{p-4}{2}
				(N_t\nabla\tilde{u}_{\kappa,t}\cdot\nabla u)(N_t\nabla\tilde{u}_{\kappa,t}\cdot\nabla v)\,\mathrm{det}\nabla\Phi_t}dx,	
		\end{split}
		\end{equation*}
		so that proving \eqref{e:I_1 bound} amounts to showing that
		\begin{equation*}
			\vert L_{\kappa,t,\varphi}(w_{\kappa,t,\varphi},_{\kappa,t,\varphi})-L_{\kappa,0,\varphi}(w_{\kappa,0,\varphi},_{\kappa,t,\varphi})\vert\leq\omega(\Vert\varphi\Vert_{C^{2,\alpha}}+\kappa)\Vert X\cdot\nu\Vert^2_{H^{1/2}(\partial B)}.
		\end{equation*}
		We argue by contradiction. Assume there exist sequences $\kappa_j\rightarrow 0$, $t_j\rightarrow t\in[0,1]$, $\varphi_j\rightarrow 0$ in $C^{2,\alpha}(\partial B)$ such that
		\begin{equation}\label{e:limits of bilinear forms unequal}
			\lim_{j\rightarrow\infty}\frac{L_{\kappa_j,t_j,\varphi_j}(w_{\kappa_j,t_j,\varphi_j},w_{\kappa_j,t_j,\varphi_j})}{\Vert X_j\cdot\nu_B\Vert^2_{H^{1/2}(\partial B)}}
			\neq\lim_{j\rightarrow\infty}\frac{L_{\kappa_j,0,\varphi_j}(w_{\kappa_j,0,\varphi_j},w_{\kappa_j,0,\varphi_j})}{\Vert X_j\cdot\nu_B\Vert^2_{H^{1/2}(\partial B)}}.
		\end{equation}
		Note that we can assume that both limits are finite.
		We define
		\begin{equation*}
		\tilde{w}_j:=\frac{w_{\kappa_j,t_j,\varphi_j}}{\Vert X_j\cdot\nu_B\Vert_{H^{1/2}(\partial B)}},\qquad
				\tilde{w}_{0,j}:=\frac{w_{\kappa_j,0,\varphi_j}}{\Vert X_j\cdot\nu_B\Vert_{H^{1/2}(\partial B)}}.
		\end{equation*}
		One can easily show that $\tilde{w}_j-\tilde{w}_{0,j}\rightarrow 0$
		strongly in $H^{1/2}(\partial B)$.
		A bit more work is required to show that 
		$\tilde{w}_j-\tilde{w}_{0,j}\rightarrow 0$
		strongly in $W^{1,2}(B_R\setminus B)$ for every $r\in(0,1)$.
		To do that, one can prove first that both $\tilde{w}_j$
		and $\tilde{w}_{0,j}$ converge weakly
		to the unique solution in $D^{1,2}(B^c,\mu)$ of
		\begin{equation*}
		\begin{cases}
			\div\left(\vert x\vert^{\frac{p-2}{p-1}}\nabla w+(p-2)\vert x\vert^{\frac{p-2}{p-1}}(\theta\cdot\nabla w)\theta\right)=0\text{ in }B^c,\\
			w=f\text{ on }\partial B,
		\end{cases}	
		\end{equation*}		 
		where $f$ is the weak limit in $H^{1/2}(\partial B)$ of the restriction of $\tilde{w}_j$ on $\partial B$ (remember that the limit of restriction of $\tilde{w}_{0,j}$ is the same). 
		To show the strong convergence consider $z_j$ - the harmonic
		extension of $\tilde{w}_{j}-\tilde{w}_{0,j}$ from $\partial B$
		to $B^c$. Note that $z_j$ converges strongly to zero in $D^{1,2}(B^c)$. Denote by $\zeta\in C^{\infty}_0(B_R)$ a cut-off function such that $\zeta\equiv 1$ on $B_R\setminus B$, $0\leq\zeta\leq 1$. By divergence theorem we get
		\begin{equation*}
		\begin{split}
			&L_{\kappa_j,t_j,\varphi_j}(\tilde{w}_{j}-\tilde{w}_{0,j}, (\tilde{w}_{j}-\tilde{w}_{0,j})\zeta)=L_{\kappa_j,t_j,\varphi_j}(\tilde{w}_j,z_j\zeta)\\
			&-(L_{\kappa_j,t_j,\varphi_j}-L_{\kappa_j,0,\varphi_j})(\tilde{w}_{0,j},(\tilde{w}_j-\tilde{w}_{0,j})\zeta)
			-L_{\kappa_j,0,\varphi_j}(\tilde{w}_{0,j},z_j\zeta)
			\rightarrow 0,
		\end{split}	
		\end{equation*}
		which yields strong convergence of $\tilde{w}_{j}-\tilde{w}_{0,j}$ to zero in $W^{1,2}(B_R\setminus B)$.
		Finally, one can now show that
		\begin{equation*}
			\lim_{j\rightarrow\infty}\left(L_{\kappa_j,t_j,\varphi_j}(\tilde{w}_j,\tilde{w}_j)-L_{\kappa_j,0,\varphi_j}(\tilde{w}_{0,j},
			\tilde{w}_{0,j})\right)=0,			
		\end{equation*}
		contradicting \eqref{e:limits of bilinear forms unequal}.
		\end{proof}

		\begin{lemma}\label{l:taylor for p-capacity}
			Given $\gamma\in(0,1]$, there exists $\delta=\delta(N,\gamma)>0$
			and a modulus of continuity $\omega$ such that for every nearly spherical set $\Omega$
			parametrized by $\varphi$ with $\Vert\varphi\Vert_{C^{2,\gamma}(\partial B_1)}<\delta$
			and $\vert\Omega\vert=\vert B_1\vert$, we have
			\begin{equation*}
				\Capa_p(\Omega)\geq \Capa_p(B_1)+\frac{1}{2}\partial^2 \Capa_p(B_1)[\varphi,\varphi]
				-\omega(\Vert\varphi\Vert_{C^{2,\gamma}})\Vert\varphi\Vert^2_{H^\frac{1}{2}(\partial B_1)},
			\end{equation*}
			where
			\begin{equation*}
			\begin{split}
				\left(\frac{p-1}{N-p}\right)^{p-2}\frac{1}{p}\partial^2 \Capa_p(B_1)[\varphi,\varphi]:=				
				-(N-1)\int_{\partial B}{\left(\frac{N-p}{p-1}\varphi\right)^2}\,d\mathcal{H}^{N-1}\\
				+\int_{B^c}{\vert x\vert^{(p-2)\left(\frac{p-N}{p-1}-1\right)}\left(\vert\nabla f(\varphi)\vert^2+(p-2)\left(\theta\cdot\nabla f(\varphi)\right)^2\right)}\,dx,
			\end{split}	
			\end{equation*}
			with $f(\varphi)$ satisfying
			\begin{equation*}
			\begin{cases}
				\div\left(\vert x\vert^{(p-2)(\frac{p-N}{p-1}-1)}\nabla f(\varphi)+(p-2)\vert x\vert^{(p-2)(\frac{p-N}{p-1}-1)}(\theta\cdot\nabla f(\varphi))\theta\right)=0\text{ in }B^c,\\
				f(\varphi)=\frac{N-p}{p-1}\varphi\text{ on }\partial B.
			\end{cases}
			\end{equation*}
		\end{lemma}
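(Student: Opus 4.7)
The plan is to Taylor expand the perturbed $p$-capacity $c_\kappa(t)=\mathrm{cap}_{p,\kappa}(\Omega_t)$ along the volume-preserving flow $\Phi_t$ provided by Lemma \ref{lm:vectorfield}, so that $\Omega_0=B_1$ and $\Omega_1=\Omega$, then pass to the limit $\kappa\to 0$. Writing the expansion with integral remainder,
\[
c_\kappa(1)=c_\kappa(0)+c'_\kappa(0)+\int_0^1(1-t)\,c''_\kappa(t)\,dt,
\]
the first step is to show $c'_\kappa(0)=0$. Since $u_{\kappa,0}$ is radial on $B_1^c$, $|\nabla u_{\kappa,0}|$ is constant on $\partial B_1$, so by the first identity in \eqref{e:first derivative p-capacity} the derivative $c'_\kappa(0)$ reduces to a constant multiple of $\int_{\partial B_1}X\cdot\nu\,d\mathcal H^{N-1}$, which vanishes since $\div X=0$ in a neighborhood of $\partial B_1$ (equivalently, $\Phi_t$ preserves volume).

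Next, I use Lemma \ref{l:continuity of the second derivative p cap} to obtain, for all $t\in[0,1]$,
\[
c''_\kappa(t)\ge c''_\kappa(0)-\omega\bigl(\|\varphi\|_{C^{2,\gamma}}+\kappa\bigr)\|X\cdot\nu\|^2_{H^{1/2}(\partial B_1)},
\]
and inserting this into the expansion gives
\[
c_\kappa(1)\ge c_\kappa(0)+\tfrac12 c''_\kappa(0)-\omega\bigl(\|\varphi\|_{C^{2,\gamma}}+\kappa\bigr)\|X\cdot\nu\|^2_{H^{1/2}(\partial B_1)}.
\]
Now I let $\kappa\to 0$. The $C^{2,\gamma}$-convergence in Theorem \ref{thm:convergence of u_kappa to u_0} together with the convergence of $\dot{u}_{\kappa,t}\circ\Phi_t$ to the solution of \eqref{e:el for u dot} established in the proof of Proposition \ref{prop:second derivative p-capacity} yield $c_\kappa(1)\to\Capa_p(\Omega)$, $c_\kappa(0)\to\Capa_p(B_1)$ and $c''_\kappa(0)\to c''_0(0)$, producing the corresponding inequality for the unperturbed $p$-capacity with the second derivative $c''_0(0)$ given by Proposition \ref{prop:second derivative p-capacity} in terms of $\dot{u}_0$ with boundary data $\dot{u}_0=\tfrac{N-p}{p-1}\,\theta\cdot X$ on $\partial B_1$.

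It remains to replace $\dot{u}_0$ with $f(\varphi)$ and $\|X\cdot\nu\|_{H^{1/2}(\partial B_1)}$ with $\|\varphi\|_{H^{1/2}(\partial B_1)}$. By Lemma \ref{lm:vectorfield}(iii) we have $\|X\cdot\nu_{B_1}-\varphi\|_{H^{1/2}(\partial B_1)}\le\omega(\|\varphi\|_\infty)\|\varphi\|_{H^{1/2}(\partial B_1)}$; since $\dot u_0$ and $f(\varphi)$ solve the same linear degenerate Dirichlet problem \eqref{e:el for u dot} (up to a sign coming from the inner normal convention, which is immaterial as the quadratic form only depends on squared quantities), continuous dependence of the associated Dirichlet energy on the $H^{1/2}$ boundary data gives
\[
\bigl|c''_0(0)-p\,\partial^2\Capa_p(B_1)[\varphi,\varphi]\bigr|\le \omega(\|\varphi\|_\infty)\|\varphi\|^2_{H^{1/2}(\partial B_1)},
\]
and similarly $\|X\cdot\nu\|^2_{H^{1/2}}\le(1+\omega(\|\varphi\|_\infty))\|\varphi\|^2_{H^{1/2}}$. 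Absorbing both errors into the modulus of continuity yields the statement.

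The main obstacle I anticipate is the $\kappa\to 0$ limit of the second derivative: the formulas in \eqref{e:second derivative p-capacity} contain boundary integrals of $|\nabla u_{\kappa,t}|$-dependent quantities and boundary/bulk terms involving $\dot{u}_{\kappa,t}$ in a weighted space that degenerates as $\kappa\to 0$. The uniform $C^{2,\gamma}$ bound and the non-degeneracy $|\nabla u_{\kappa,t}|\ge c>0$ near $\partial B_1$ from Theorem \ref{thm:convergence of u_kappa to u_0}, combined with the weak-to-strong convergence argument in the proof of Proposition \ref{prop:second derivative p-capacity}, are exactly what make this passage possible; using Lemma \ref{l:continuity of the second derivative p cap} at $\kappa>0$ rather than directly at $\kappa=0$ is what allows one to avoid confronting the degeneracy head-on.
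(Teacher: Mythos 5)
Your proposal is correct and follows essentially the same route as the paper's proof: Taylor-expand the perturbed energy $c_\kappa$ in $t$ along the flow, observe that the first derivative at $t=0$ vanishes, control the remainder via Lemma \ref{l:continuity of the second derivative p cap}, and pass to the limit $\kappa\to 0$ using Theorem \ref{thm:convergence of u_kappa to u_0} and Proposition \ref{prop:second derivative p-capacity}. The only cosmetic difference is that you verify $c'_\kappa(0)=0$ directly for each $\kappa>0$ (via radiality of $u_{\kappa,0}$ and volume preservation) rather than invoking, as the paper does, first-order optimality of the ball to get $c'_0(0)=0$ after passing to the limit; these are equivalent.
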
		
		\begin{proof}
		We write Taylor expansion for $c_\kappa$:
		$$c_\kappa(1)=c_\kappa(0)+c_\kappa'(0)+\frac{1}{2}c_\kappa''(0)+\frac{1}{2}\int_0^1{(1-t)(c_\kappa''(t)-c_\kappa''(0))}\,dt.$$
		From isocapacitary inequality we know that $c_0'(0)=0$.
		So, we get the desired inequality using Lemma \ref{l:continuity of the second derivative p cap} and passing to the limit as $\kappa\rightarrow 0$. 
		\end{proof}
		
\subsection{Inequality for nearly spherical sets}\label{sec:lin}
We now establish a quantitative inequality for nearly spherical sets in the spirit of those established by Fuglede in~\cite{Fuglede89}, compare with \cite[Section 2]{FZ}.

\begin{theorem} \label{mainthmnrlysphr}
	There exists $\delta=\delta(N), c=c(N)$ such that if $\Omega$ is a nearly spherical set of class $C^{2,\gamma}$
	parametrized by $\varphi$ with $\Vert\varphi\Vert_{C^{2,\gamma}}\leq\delta,\vert\Omega\vert=\vert B_1\vert$
	and $x_\Omega=0$, then
	\begin{equation*}
		\Capa_p(\Omega)-\Capa_p(B_1)\geq c\Vert\varphi\Vert^2_{H^\frac{1}{2}(\partial B_1)}.
	\end{equation*}
\end{theorem}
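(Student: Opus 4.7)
By Lemma~\ref{l:taylor for p-capacity}, the conclusion follows once we establish the coercivity estimate $\partial^2\Capa_p(B_1)[\varphi,\varphi]\geq 2c\,\|\varphi\|^2_{H^{1/2}(\partial B_1)}$ for admissible $\varphi$, since the Taylor remainder $\omega(\|\varphi\|_{C^{2,\gamma}})\|\varphi\|^2_{H^{1/2}}$ can be absorbed by choosing $\delta$ small. The natural route is a Fuglede-style spectral diagonalization of the quadratic form via spherical harmonics.

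Expand $\varphi=\sum_{k\geq 0}\sum_{i} a_{k,i}Y_{k,i}$ in an $L^2(\partial B_1)$-orthonormal basis of eigenfunctions of $-\Delta_{S^{N-1}}$ with eigenvalues $k(k+N-2)$. Expanding the volume constraint $|\Omega|=|B_1|$ and the barycenter constraint $x_\Omega=0$ to second order in $\varphi$ using the polar parametrization $\partial\Omega=\{(1+\varphi(x))x\}$ gives
\[
|a_0|+\Bigl(\sum_i a_{1,i}^2\Bigr)^{1/2}\leq C\|\varphi\|^2_{L^2(\partial B_1)},
\]
so modes $k=0,1$ contribute at most $C\|\varphi\|^4_{L^2}\leq C\delta^2\|\varphi\|^2_{H^{1/2}}$ to the quadratic form and can be absorbed.

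For each mode I separate variables in \eqref{e:el for u dot}. The weight exponent simplifies to $\gamma:=(p-2)\bigl(\tfrac{p-N}{p-1}-1\bigr)=-\tfrac{(p-2)(N-1)}{p-1}$, and the ansatz $f(Y_{k,i})=R(r)Y_{k,i}(\theta)$ reduces the PDE to the Euler equation
\[
(p-1)R''+\frac{N-1}{r}R'-\frac{k(k+N-2)}{r^2}R=0,
\]
where the drift coefficient $N-1$ arises from the clean cancellation $(p-1)(N-1+\gamma)=N-1$. Its decaying solution is $R(r)=\tfrac{N-p}{p-1}\,r^{\alpha_k}$ with
\[
\alpha_k=\frac{-(N-p)-\sqrt{(N-p)^2+4(p-1)k(k+N-2)}}{2(p-1)}.
\]
Setting $A(f):=|x|^\gamma\bigl(\nabla f+(p-2)(\theta\cdot\nabla f)\theta\bigr)$, which is divergence-free by the PDE, an integration by parts over $B^c$ against $f$, combined with the on-boundary identity $A(f)\cdot(-\theta)=-(p-1)\partial_r f$ on $\partial B_1$, plus orthogonality of the $Y_{k,i}$, diagonalizes the whole second variation:
\[
\partial^2\Capa_p(B_1)[\varphi,\varphi]=p\Bigl(\tfrac{N-p}{p-1}\Bigr)^{p}\sum_{k,i}\lambda_k\,a_{k,i}^2,\qquad \lambda_k:=-(N-1)-(p-1)\alpha_k.
\]

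The crucial remaining step is the sign analysis of $\{\lambda_k\}$. A direct computation gives $\alpha_0=-\tfrac{N-p}{p-1}$, so $\lambda_0=1-p<0$; and thanks to the identity $(N-p)^2+4(p-1)(N-1)=(N+p-2)^2$ one finds $\alpha_1=-\tfrac{N-1}{p-1}$ and hence $\lambda_1=0$, reflecting invariance of $p$-capacity under translations. For $k\geq 2$, the inequality $k(k+N-2)>N-1$ together with the same discriminant computation yields $\lambda_k>0$, with $\lambda_k\sim\sqrt{p-1}\,k$ as $k\to\infty$; since an explicit harmonic extension on $B^c$ gives $\|Y_{k,i}\|^2_{H^{1/2}(\partial B_1)}=N-1+k$, this produces $\lambda_k\geq c(N,p)(1+k)$ for $k\geq 2$, delivering coercivity on $\varphi_{\geq 2}$, while the modes $k=0,1$ are absorbed via the constraint estimates. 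The hardest point is the sign check for $\lambda_k$: without the factorization $(N-p)^2+4(p-1)(N-1)=(N+p-2)^2$ the translation modes would ruin coercivity, and it is precisely this identity that forces the exact cancellation $\lambda_1=0$.
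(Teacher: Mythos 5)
Your proposal is correct and takes essentially the same approach as the paper: a Fuglede-style second-variation argument diagonalized by spherical harmonics, reduction of \eqref{e:el for u dot} to the Euler equation $(p-1)R''+\tfrac{N-1}{r}R'-\tfrac{k(k+N-2)}{r^2}R=0$ (which is exactly the paper's \eqref{e:eq for alphha} for the indicial exponent $\alpha_k$), the same eigenvalue formula $Q[Y_{k,i}]=-(N-1)-(p-1)\alpha_k$, the same factorization $(N-p)^2+4(p-1)(N-1)=(N+p-2)^2$ killing the $k=1$ modes, and absorption of the $k=0,1$ modes via the volume and barycenter constraints. The only cosmetic differences are that the paper works with $\Psi=\theta\cdot X$ (controlled in terms of $\varphi$ via Lemma~\ref{lm:vectorfield}) and obtains $a_0=0$ exactly rather than your $|a_0|\leq C\|\varphi\|_{L^2}^2$; both absorptions are valid.
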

\begin{rem}
	Note that by Lemma \ref{propasym} this theorem gives us Theorem \ref{thm:main p-cap} for nearly spherical sets.
\end{rem}
\begin{proof}
			Let $X$ be the vector field from Lemma \ref{lm:vectorfield}.
			We introduce the following notation:
			\begin{equation*}
				\hat{u}=-\frac{p-1}{p-N}\dot{u}_0,\text{ }\Psi=\theta\cdot X.
			\end{equation*}
			Then by Proposition \ref{prop:second derivative p-capacity} $\hat{u}$ solves
			\begin{equation*}
			\begin{cases}
				\div\left(\vert x\vert^{(p-2)(\frac{p-N}{p-1}-1)}\nabla \hat{u}+(p-2)\vert x\vert^{(p-2)(\frac{p-N}{p-1}-1)}(\theta\cdot\nabla\hat{u})\theta\right)=0\text{ in }B^c,\\
				\hat{u}=\Psi\text{ on }\partial B
			\end{cases}
			\end{equation*}
			and
			\begin{equation*}
			\begin{split}
				\left(\frac{p-1}{p-N}\right)^{p}\frac{1}{p}c''_0(0)&=-(N-1)\int_{\partial B}{\hat{u}^2}d\mathcal{H}^{N-1}\\
				&+\int_{B^c}{\vert x\vert^{(p-2)\left(\frac{p-N}{p-1}-1\right)}\left(\vert\nabla\hat{u}\vert^2+(p-2)(\theta\cdot\nabla\hat{u})^2\right)}dx.
			\end{split}	
			\end{equation*}
			We introduce the following notation:
			\begin{equation*}
			\begin{split}
			Q[\Psi]&=-(N-1)\int_{\partial B}{\hat{u}^2}d\mathcal{H}^{N-1}\\
				&+\int_{B^c}{\vert x\vert^{(p-2)\left(\frac{p-N}{p-1}-1\right)}\left(\vert\nabla\hat{u}\vert^2+(p-2)(\theta\cdot\nabla\hat{u})^2\right)}dx.
			\end{split}
			\end{equation*}
			We write $\Psi$ in the basis of spherical harmonics, i.e.
			\begin{equation*}
				\Psi=\sum_{k=0}^\infty\sum_{i=1}^{M(k,N)}a_{k,i}Y_{k,i},
			\end{equation*}
where $Y_{k,i}$ for $i=1,\dots, M(k,N)$ are harmonic polynomials of degree $k$, normalized so that $\Vert Y_{k,i}\Vert_{L^2(\partial B)}=1$. 
			By Lemma \ref{lm:vectorfield} we have that $\Vert\Psi\Vert_{H^{1/2}(\partial B_1)}\geq c\Vert\varphi\Vert_{H^{1/2}(\partial B_1)}$ if $\delta$ is small enough. Thus, to prove the theorem, by Lemma \ref{l:taylor for p-capacity} it is enough to show that $Q[\Psi]\geq c \Vert\Psi\Vert_{H^{1/2}(\partial B_1)}^2$, or, equivalently, that
		  \begin{equation}\label{e:ineq for Q}
		  Q[\Psi]=\sum_{k=0}^\infty\sum_{i=1}^{M(k,N)}a_{k,i}^2 Q[Y_{k,i}]\geq c\sum_{k=0}^\infty\sum_{i=1}^{M(k,N)}(k+1)a_{k,i}^2.
		  \end{equation}
		  
		\medskip
		\noindent
		We first note $\int_{\partial B}\Psi=0$ as $\Phi_t$ conserves volume, and thus $a_0=0$.		  
		We then bound $\sum_{i=1}^n a_{1,i}^2$. We recall that $x_\Omega=0$, hence
		$$
		\int_{\partial B}{x\left((1+\varphi)^{N+1}-1\right)}\,d\mathcal{H}^{N-1}=0
		$$
		and consequently, for any $\varepsilon>0$ if $\delta$ is small enough we get
		$$
		\left\vert\int_{\partial B}{x\varphi}\,d\mathcal{H}^{N-1}\right\vert\leq\varepsilon\Vert\varphi\Vert_{L^2(\partial B)}.
		$$
		By Lemma \ref{lm:vectorfield} this in turn yields
		$$
		\left\vert\int_{\partial B}{x\Psi}\,d\mathcal{H}^{N-1}\right\vert\leq 2\varepsilon\Vert\Psi\Vert_{L^2(\partial B)},
		$$
		if $\delta$ is small enough.
		So we get $$\sum_{i=1}^n a_{1,i}^2\leq 2\sum_{k=2}^\infty\sum_{i=1}^{M(k,N)}(k+1)a_{k,i}^2$$
		for $\delta$ small enough and to prove \eqref{e:ineq for Q}
		it remains to show that
		  \begin{equation}\label{e:ineq for Q reduced}
		  \sum_{k=0}^\infty\sum_{i=1}^{M(k,N)}a_{k,i}^2 Q[Y_{k,i}]\geq c\sum_{k=2}^\infty\sum_{i=1}^{M(k,N)}(k+1)a_{k,i}^2.
		  \end{equation}
We denote by $u_{k,i}$ the function $\hat{u}$ corresponding to $Y_{k,i}$ on the boundary.		
			Then a straightforward computation tells us that 
			$$u_{k,i}=\vert x\vert^{\alpha_k}Y_{k,i},$$
			where $\alpha_k<0$ is the only negative solution of the following quadratic equation:
			\begin{equation}\label{e:eq for alphha}
				(p-1)\alpha_k^2+\left(N-p\right)\alpha_k-k(k+N-2)=0.
			\end{equation}
			Remembering that
			$$\int_{\partial B}\vert Y_{k,i}\vert^2d\mathcal{H}^{N-1}=1,
			\int_{\partial B}\vert\nabla_\tau Y_{k,i}\vert^2d\mathcal{H}^{N-1}=k(k+N-2),$$
			we get that
			\begin{equation*}
			\begin{split}
				Q[Y_{k,i}]=-(N-1)-\frac{\alpha_k^2(p-1)+k(k+N-2)}{(p-2)(\frac{p-N}{p-1}-1)+2(\alpha_k-1)+N}\\
				=-(N-1)-\frac{\left(2k(k+N-2)-(N-p)\alpha_k\right)(p-1)}{(p-1)2\alpha_k+N-p},
			\end{split}	
			\end{equation*}
			where we used \eqref{e:eq for alphha}.
			Now, since by \eqref{e:eq for alphha} we have
			\begin{equation*}
				\alpha_k=-\frac{N-p+\sqrt{(N-p)^2+4(p-1)k(k+N-2)}}{2(p-1)},
			\end{equation*}
			we get after straightforward computations
			\begin{equation*}
			\begin{split}
				Q[Y_{k,i}]=-(N-1)+\frac{N-p+\sqrt{(N-p)^2+4(p-1)k(k+N-2)}}{2}.
			\end{split}	
			\end{equation*}
			Notice that 
			\begin{equation*}
			Q[Y_{k,i}]\geq ck\text{ for }k\geq 2
			\end{equation*} 
			for some $c=c(N,p)>0$. This gives us \eqref{e:ineq for Q reduced} and hence \eqref{e:ineq for Q} and so we conclude the proof of the theorem.	
\end{proof}

	\section{Stability  for bounded sets with small asymmetry}\label{penpb}

	This section, together with the two subsequent ones, is dedicated to the proof of the following theorem.	
	
	\begin{theorem} \label{pcapmainthmbdd}
		There exist constants $c=c(N,R)$, $\varepsilon_0=\varepsilon_0(N,R)$ such that
		for any open set $\Omega\subset B_R$ with $\vert\Omega\vert=\vert B_1\vert$ and $\alpha(\Omega)\leq\varepsilon_0$
		the following inequality holds:
		\begin{equation*}
			\Capa_p(\Omega)-\Capa_p(B_1)\geq c\alpha(\Omega).
		\end{equation*}
	\end{theorem}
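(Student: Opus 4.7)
The plan is to follow the Selection Principle of \cite{CL}, as adapted for capacitary functionals in \cite{DPMM}. I argue by contradiction: assume there exists a sequence of open sets $\Omega_j \subset B_R$ with $|\Omega_j| = |B_1|$, $\alpha(\Omega_j) \to 0$, and
$$\Capa_p(\Omega_j)-\Capa_p(B_1) = o(\alpha(\Omega_j)).$$
The goal is to replace $\Omega_j$ by a ``better'' sequence $\tilde\Omega_j$ of minimizers of a suitable penalized free boundary problem, which still violates the conclusion but enjoys enough regularity to fall into the nearly spherical regime covered by Theorem \ref{mainthmnrlysphr}. The natural functional to consider is
$$\mathcal{F}_j(E) := \Capa_p(E) + \Lambda \bigl| |E|-|B_1| \bigr| + \eta_j \sqrt{\bigl(\alpha(E)-\alpha(\Omega_j)\bigr)^2 + \varepsilon_j^2},$$
minimized over open sets $E \subset B_R$, with $\Lambda$ a large fixed constant, $\eta_j \to 0$ a slowly vanishing weight on the asymmetry penalization, and $\varepsilon_j \to 0$ a smoothing parameter that makes the functional differentiable in $\alpha$. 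Existence of minimizers $\tilde\Omega_j$ follows from $L^1$-compactness in $B_R$, lower semicontinuity of $\Capa_p$, and continuity of $\alpha$ (Lemma \ref{propasym}\ref{asymlip}).

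Testing $\mathcal{F}_j$ against $\Omega_j$ and calibrating $\Lambda, \eta_j, \varepsilon_j$ carefully, one derives three properties of $\tilde\Omega_j$: (a) $|\tilde\Omega_j| = |B_1|$, forced by taking $\Lambda$ larger than the effective Lagrange multiplier of the volume-constrained capacity problem near the ball; (b) $\alpha(\tilde\Omega_j)$ is comparable to $\alpha(\Omega_j)$, provided the ratio $o(\alpha(\Omega_j))/\alpha(\Omega_j)$ is negligible with respect to $\eta_j$; and (c) $\Capa_p(\tilde\Omega_j) - \Capa_p(B_1) = o(\alpha(\tilde\Omega_j))$, so the new sequence also contradicts the theorem. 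In particular $\alpha(\tilde\Omega_j) \to 0$, and by Lemma \ref{propasym}\ref{compasym} we obtain $\tilde\Omega_j \to B_1$ in $L^1$ after translating to place the barycenter at the origin, which is compatible with the volume constraint and the translation invariance of $\Capa_p$.

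The crucial step, to be carried out in the subsequent section, is regularity. The Euler--Lagrange condition for $\mathcal{F}_j$ identifies $\tilde\Omega_j$ as an almost-minimizer of the volume-constrained $p$-capacity functional with an $L^\infty$-perturbation of controlled size coming from the $\alpha$-term. Such configurations fall within the scope of the free-boundary regularity theory for $p$-capacity of \cite{DP}, which provides uniform $C^{1,\gamma}$ bounds on $\partial\tilde\Omega_j$. Since $|\nabla u_0|$ is bounded away from zero on $\partial B_1$, Theorem \ref{thm:convergence of u_kappa to u_0} (applied with $\kappa=0$) guarantees that the $p$-Laplace equation is non-degenerate near $\partial\tilde\Omega_j$ for $j$ large, so Schauder theory upgrades the regularity to uniform $C^{2,\gamma}$ bounds. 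Consequently $\tilde\Omega_j$ are nearly spherical in the sense of Definition \ref{d:ns}, parametrized by $\varphi_j$ with $\|\varphi_j\|_{C^{2,\gamma}} \to 0$. Theorem \ref{mainthmnrlysphr} combined with Lemma \ref{propasym}\ref{asymnrlysphrsets} then gives
$$\Capa_p(\tilde\Omega_j)-\Capa_p(B_1) \geq c \|\varphi_j\|_{H^{1/2}(\partial B_1)}^2 \geq c' \alpha(\tilde\Omega_j),$$
contradicting property (c). The main obstacle will be the regularity step: one must verify that the $\alpha$-penalization constitutes a sufficiently mild perturbation so that the degenerate $p$-capacitary free boundary theory of \cite{DP} applies with uniform estimates, and that one can propagate these estimates all the way to $C^{2,\gamma}$ control of $\partial\tilde\Omega_j$ so as to enter the nearly spherical regime. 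A secondary technical subtlety is the simultaneous calibration of $\Lambda$, $\eta_j$, and $\varepsilon_j$ so that properties (a)--(c) hold in tandem.
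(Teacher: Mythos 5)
Your high-level plan is the right one and matches the paper: argue by contradiction, replace the contradicting sequence by minimizers of a penalized free boundary problem, apply the regularity theory of \cite{DP} to get smooth convergence to the ball, and conclude by Theorem \ref{mainthmnrlysphr}. However, several of the concrete design choices in your penalized functional deviate from the paper's in ways that matter, and one step of your outline is mis-attributed.

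First, your volume penalty $\Lambda\bigl||E|-|B_1|\bigr|$ is symmetric, whereas the paper deliberately uses the asymmetric Aguilera--Alt--Caffarelli function $f_{\hat\eta}$ (steep slope $1/\hat\eta$ below $\omega_N$, shallow slope $\hat\eta$ above). The paper explicitly flags this: ``to apply the regularity theory for free boundary problems, it is crucial to introduce a \emph{monotone} dependence on the volume.'' The one-sided Lipschitz bound \eqref{e:feta} is what drives the comparison estimates (Lemma \ref{l:varpcap}), the density estimates, and the non-degeneracy lemma. Your claim (a) --- that for $\Lambda$ large the minimizers automatically satisfy $|\tilde\Omega_j|=|B_1|$ --- is also not how the paper's argument runs: the minimizers of \eqref{pcappertpb} have $\bigl||\Omega_j|-|B_1|\bigr|\le C\sigma^4\varepsilon_j$ (Lemma \ref{pcapfirstpropmin}), and the exact volume is only restored at the very end by rescaling $U_j=\lambda_j(\Omega_j-x_{\Omega_j})$. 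Keeping the volume slack, rather than pinning it, is exactly what keeps the Euler--Lagrange condition on the free boundary a clean equality of the form $(p-1)q_{v_j}^p-(\alpha\text{-term})=\Lambda_j$, which is what feeds into the regularity theorem.

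Second, you take a vanishing weight $\eta_j\to 0$ on the asymmetry penalization, whereas the paper uses a \emph{fixed} small parameter $\sigma$ and the asymmetry term $\sqrt{\varepsilon_j^2+\sigma^2(\alpha(\Omega)-\varepsilon_j)^2}$. The fixed $\sigma$ is needed twice: it gives a uniform-in-$j$ Lipschitz bound $\le\sigma$ on the asymmetry perturbation (which is what makes the free boundary condition a uniformly small perturbation of the constant case and lets the Selection Principle deliver $\limsup_j\frac{\Capa_p(U_j)-\Capa_p(B_1)}{\alpha(\Omega_j)}\le C\sigma$), and it provides the final contradiction $c(N,R)\le C(N,R)\sigma$, a statement that makes sense only with $\sigma$ a free parameter one can send to zero \emph{after} the $j\to\infty$ limit. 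With $\eta_j\to 0$ the final inequality would compare two quantities that both vanish, and you would have to track very carefully the relative rates; the paper's structure avoids this entirely.

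Finally, Theorem \ref{thm:convergence of u_kappa to u_0} does not provide the non-degeneracy of the gradient near $\partial\tilde\Omega_j$; that theorem is about nearly spherical sets (the $\Phi_t$-flows of Lemma \ref{lm:vectorfield}) and is used only in Section \ref{fuglede} to justify the Taylor expansion. In the Selection Principle step, non-degeneracy of $v_j$ near the free boundary comes from the free boundary theory itself: the Lipschitz bound (Lemma \ref{lip}), the non-degeneracy lemma, and the density estimates on $\partial\Omega_j$, all of which rely on the minimality of $\Omega_j$ for \eqref{pcappertpb} with the asymmetric volume penalty. Until those estimates are in place, one does not yet know that $\Omega_j$ is nearly spherical, so Theorem \ref{thm:convergence of u_kappa to u_0} is not available.
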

	
	To prove Theorem \ref{pcapmainthmbdd} we are going to argue by contradiction. Suppose that the theorem doesn't hold. Then there exists a sequence of open sets $\tilde{\Omega}_j\subset B_R$ such that
	\begin{equation}\label{pcapcontrseq}
		\vert \tilde{\Omega}_j\vert=\vert B_1\vert,\quad
		\alpha(\tilde{\Omega}_j)=\varepsilon_j\rightarrow 0,\quad
		\frac{\Capa_p(\tilde{\Omega}_j)-\Capa_p(B_1)}{\varepsilon_j}\leq\sigma^4
	\end{equation}
	for some small $\sigma$ to be chosen later.
	We then perturb the sequence $\tilde{\Omega}_j$ so that it converges to $B_1$ in a smooth way. More precisely, we are going to show the following.		
	
	\begin{theorem}[Selection Principle]\label{pcapSelPr}
		There exists $\tilde{\sigma}=\tilde{\sigma}(N,R)$ such that 
		if one has a contradicting sequence $\tilde{\Omega}_j$ as 
		the one described above in \eqref{pcapcontrseq} with $\sigma<\tilde{\sigma}$, 
	          then there exists a sequence of smooth open sets $U_j$ such that
		\begin{enumerate}[label=\textup{(\roman*)}]
			\item $\vert U_j\vert=\vert B_1\vert$,
			\item $\partial U_j\rightarrow \partial B_1$ in $C^k$ for every $k$,
			\item $\limsup_{j\rightarrow\infty}\frac{\Capa_p(U_j)-\Capa_p(B_1)}{\alpha(\Omega_j)}\leq C\sigma$ for some $C=C(N,R)$ constant,
			\item the barycenter of every $\Omega_j$ is in the origin.
		\end{enumerate}
	\end{theorem}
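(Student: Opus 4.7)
The plan is to implement the Cicalese--Leonardi selection principle adapted to $p$-capacity: perturb $\tilde{\Omega}_j$ to a minimizer of a penalized functional that (a) forces the asymmetry to stay comparable to $\varepsilon_j$ so the deficit-to-asymmetry ratio is preserved, and (b) is amenable to the regularity theory of \cite{DP}, making the perturbed set smooth.

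After translating each $\tilde{\Omega}_j$ so that its barycenter lies at the origin (a translation changes neither $\Capa_p$ nor $\alpha$, since $\alpha$ is translation-invariant by its definition), fix $R'>R$ large enough to contain all translated sets and consider
\begin{equation*}
J_j(\Omega) := \Capa_p(\Omega) + \sigma\sqrt{(\alpha(\Omega)-\varepsilon_j)^2 + \varepsilon_j^2}
\end{equation*}
on $\mathcal{A} := \{\Omega\subset B_{R'}\text{ open}\,:\,|\Omega|=|B_1|,\ x_\Omega=0\}$. Existence of a minimizer $U_j\in\mathcal{A}$ follows by the direct method: $L^1$-compactness of characteristic functions of subsets of $B_{R'}$, the standard $L^1$-lower semicontinuity of $\Capa_p$, and the continuity of $\alpha$ on $L^1$-convergent sequences supplied by Lemma \ref{propasym}\ref{asymlip}. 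Both the volume and barycenter constraints pass to the limit; if enforcing $x_\Omega=0$ inside the direct method is awkward, I replace it with a smooth penalty $\Lambda|x_\Omega|^2$ for $\Lambda$ large, or recenter the minimizer a posteriori by a small translation.

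Testing $J_j$ with the competitor $\tilde{\Omega}_j\in\mathcal{A}$ gives
\begin{equation*}
\Capa_p(U_j) + \sigma\sqrt{(\alpha(U_j)-\varepsilon_j)^2 + \varepsilon_j^2} \;\le\; \Capa_p(\tilde{\Omega}_j) + \sigma\varepsilon_j \;\le\; \Capa_p(B_1) + (\sigma^4+\sigma)\varepsilon_j.
\end{equation*}
Using $\Capa_p(U_j)\ge\Capa_p(B_1)$ (isocapacitary inequality) together with $\sqrt{a^2+\varepsilon_j^2}\ge\varepsilon_j$, I can peel off the two summands separately to obtain
\begin{equation*}
\Capa_p(U_j)-\Capa_p(B_1) \;\le\; \sigma^4\varepsilon_j, \qquad |\alpha(U_j)-\varepsilon_j| \;\le\; C\sigma^{3/2}\varepsilon_j.
\end{equation*}
For $\sigma$ small enough this forces $\alpha(U_j)\ge\varepsilon_j/2$, so $(\Capa_p(U_j)-\Capa_p(B_1))/\alpha(U_j)\le 2\sigma^4\le C\sigma$, which is (iii); (i) and (iv) are built into $\mathcal{A}$.

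For smoothness and convergence, note that by Lemma \ref{propasym}\ref{asymlip} the penalty term is uniformly Lipschitz in $|\Omega\Delta\Omega'|$ on $\mathcal{A}$, so $U_j$ is a quasi-minimizer of $\Capa_p$ under the volume constraint with a bounded lower-order volume-type perturbation. Compactness together with the bound $\Capa_p(U_j)\to\Capa_p(B_1)$ and Lemma \ref{propasym}\ref{compasym} (or equivalently the rigidity of the isocapacitary inequality via Schwarz symmetrization) forces $U_j\to B_1$ in $L^1$. The free-boundary regularity results of \cite{DP} then apply and yield, for $j$ large, that $\partial U_j$ is a $C^{1,\gamma}$ graph over $\partial B_1$ with graph function $\varphi_j\to 0$ in $C^{1,\gamma}$. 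Because the capacitary potential has non-degenerate gradient near $\partial B_1$ (so the linearized $p$-Laplace operator around $u_0$ is uniformly elliptic on a neighborhood), a standard Schauder bootstrap applied to the Euler--Lagrange system of the capacitary potential lifts the convergence of $\varphi_j$ from $C^{1,\gamma}$ to $C^k$ for every $k$, giving (ii).

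The principal obstacle is the regularity step: one has to cast the penalized functional $J_j$ into the framework of \cite{DP}, treating the $\alpha$-penalty as a harmless volume-Lipschitz perturbation, and then carefully iterate Schauder estimates for the degenerate $p$-Laplace equation on the perturbed exterior domain to pass from $C^{1,\gamma}$ closeness to $C^k$ closeness. Imposing the barycenter constraint inside the minimization, rather than a posteriori, is a technical inconvenience that does not affect any leading-order estimate.
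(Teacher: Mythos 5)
Your high-level strategy is the right one and matches the paper's: perturb the contradicting sequence via a penalized minimum problem whose penalty forces the asymmetry to remain comparable to $\varepsilon_j$ while keeping the deficit small, then appeal to the free boundary regularity theory of \cite{DP} to obtain smoothness. Your preliminary estimates from minimality (peeling off $\Capa_p(U_j)-\Capa_p(B_1)\le\sigma^4\varepsilon_j$ and $|\alpha(U_j)-\varepsilon_j|\le C\sigma^{3/2}\varepsilon_j$) are correct and would give (iii).

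However, there is a genuine gap in the structure of the minimization problem. You impose the hard constraints $|\Omega|=|B_1|$ and $x_\Omega=0$ inside the admissible class $\mathcal{A}$. The paper deliberately does \emph{not} do this: it replaces the volume constraint by the Aguilera--Alt--Caffarelli monotone penalty $f_\eta(|\Omega|)$ and imposes no barycenter condition, remarking explicitly that introducing a monotone dependence on the volume ``is crucial'' in order to apply the free boundary regularity theory. This is not cosmetic. First, existence: your direct method argument takes an $L^1$-limit of characteristic functions, but the limit set need not be open, need not have the exact volume (one only controls $|\hat V_\infty\setminus\hat V|=0$ directly), and $\Capa_p$ is not obviously $L^1$-lower semicontinuous at the level of the limit set without further argument -- the paper devotes a two-step proof (bounded-perimeter minimizing sequence, then showing the set where the limit potential equals $1$ coincides with the $L^1$-limit set, using precisely the monotone penalty) to close this. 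Second, regularity: the results of \cite{DP} concern (quasi-)minimizers of an unconstrained Alt--Caffarelli functional. With a hard volume constraint and a hard barycenter constraint you must produce Lagrange multipliers and re-derive the Euler--Lagrange conditions and the non-degeneracy/linear-growth estimates in the constrained setting; this is what the penalty is engineered to avoid. The paper instead obtains smoothness of the free boundary from the explicit one-sided volume comparison estimates afforded by $f_\eta$, an Euler--Lagrange equation stating that $q_{v_j}$ equals an explicit smooth function of $x$, and only then the flatness-implies-smoothness theorem of \cite{DP} (which gives $C^{k+1,\gamma}$ directly from $C^{k,\gamma}$ smoothness of $q_v$, not from a Schauder bootstrap on the potential).

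A further, more minor mismatch: the theorem as stated produces $U_j$ with $|U_j|=|B_1|$ and the barycenter of the minimizers $\Omega_j$ at the origin, which in the paper is achieved by rescaling and translating the penalized minimizers \emph{a posteriori}, $U_j=\lambda_j(\Omega_j-x_{\Omega_j})$, using the quantitative closeness $||\Omega_j|-|B_1||\le C\sigma^4\varepsilon_j$. Imposing both constraints inside the minimization, as you propose, is a real obstruction rather than a ``technical inconvenience,'' and your fallback of ``recentering a posteriori'' should in fact be the main route, applied to both volume and barycenter.
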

	
	\begin{proof} [Proof of Theorem \ref{pcapmainthmbdd} assuming Selection Principle]
		Suppose Theorem \ref{pcapmainthmbdd} does not hold. Then for any $\sigma>0$ we can find a contradicting
		sequence $\tilde{\Omega}_j$ as in \eqref{pcapcontrseq}. We apply Theorem \ref{pcapSelPr} to $\tilde{\Omega}_j$
		to get a smooth contradicting sequence $U_j$.
		
		By the properties of $U_j$, we have that for $j$ big enough $U_j$ is a nearly spherical set with barycenter at the origin
		and with volume $\vert B_1\vert$.
		Thus, we can use Theorem \ref{mainthmnrlysphr} and get
		$$c(N,R)\leq\limsup_{j\rightarrow\infty}\frac{\Capa_*(U_j)-\Capa_*(B_1)}{\alpha_*(\Omega_j)}\leq C(N,R)\sigma.$$
		But this cannot happen for $\sigma=\sigma(N,R)$ small enough.
	\end{proof}

\section{Proof of Theorem \ref{pcapSelPr}: Existence and first properties}	\label{s:ex}
	
\subsection{Getting rid of the volume constraint}
The first step consists in getting rid of the volume constraint in the isocapacitary inequality. Note that this has to be done locally  since, by scaling,  globally there exists no Lagrange multiplier. Furthermore, to apply the regularity theory for free boundary problems, it is crucial to introduce a \emph{monotone} dependence on the volume. To this end, let us set, following \cite{AguileraAltCaffarelli86},
	\begin{equation*}
		f_\eta(s):=
		\begin{cases}
			-\frac{1}{\eta}(s-\omega_N), \qquad &s\leq\omega_N\\
			-\eta(s-\omega_N), &s\geq\omega_N
		\end{cases}
	\end{equation*} 
	and let us consider the new functional	
	\begin{equation*}
		\mathscr{C}_\eta(\Omega)=\Capa_p(\Omega)+f_\eta(\vert\Omega\vert).
	\end{equation*}
We now show that the above functional is uniquely minimized by balls. Note also that \(f_\eta\) satisfies
\begin{equation}
\label{e:feta}
\eta(t-s)\le f_\eta(s)-f_\eta(t)\le \frac{(t-s)}{\eta}\qquad\text{for all \(0\le s\le t\).}
\end{equation}

	\begin{lemma} \label{pcappropofCeta}
		There exists an $\hat{\eta}=\hat{\eta}(R)>0$ such that the only minimizer of 
		$\mathscr{C}_{\hat{\eta}}$ in the class of sets contained in $B_{R}$
		is  a translate of the unit  unit ball $B_1$.
		
		Moreover, there exists $c=c(R)>0$ such that for any ball $B_r$ with $0<r<R$, one has
		\begin{equation*}
			\mathscr{C}_{\hat{\eta}}(B_r)-\mathscr{C}_{\hat{\eta}}(B_1)\geq c\vert r-1\vert.
		\end{equation*}
	\end{lemma}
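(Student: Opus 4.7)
My plan is to reduce the problem to a one-dimensional computation via the isocapacitary inequality and then analyze the scalar function $r\mapsto \mathscr{C}_{\hat\eta}(B_r)$ explicitly.

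\textbf{Step 1: Reduction to balls.} Given any admissible $\Omega\subset B_R$, let $r_\Omega\in(0,R]$ be defined by $\omega_N r_\Omega^N=|\Omega|$. Because $f_{\hat\eta}$ depends only on the volume, the classical isocapacitary inequality \eqref{iso_p} immediately yields
\begin{equation*}
\mathscr{C}_{\hat\eta}(\Omega)=\Capa_p(\Omega)+f_{\hat\eta}(|\Omega|)\ge \Capa_p(B_{r_\Omega})+f_{\hat\eta}(|B_{r_\Omega}|)=\mathscr{C}_{\hat\eta}(B_{r_\Omega}),
\end{equation*}
with equality (in the introductory discussion after \eqref{iso_p}) only when $\Omega$ coincides, up to a set of zero $p$-capacity, with a translate of the ball $B_{r_\Omega}$. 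So it suffices to prove that among balls $B_r\subset B_R$, $r=1$ is the unique minimizer of $\mathscr{C}_{\hat\eta}$ with the claimed linear growth.

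\textbf{Step 2: One-dimensional analysis.} Using the scaling $\Capa_p(B_r)=r^{N-p}\Capa_p(B_1)$, I define
\begin{equation*}
g(r):=\mathscr{C}_{\hat\eta}(B_r)-\mathscr{C}_{\hat\eta}(B_1)=\Capa_p(B_1)(r^{N-p}-1)+f_{\hat\eta}(\omega_N r^N),
\end{equation*}
and split the analysis on $\{r<1\}$ and $\{r>1\}$ where $f_{\hat\eta}$ is smooth. On $(0,1)$,
\[
g'(r)=r^{N-p-1}\Big[(N-p)\Capa_p(B_1)-\tfrac{N\omega_N}{\hat\eta}r^{p}\Big],
\]
while on $(1,R]$,
\[
g'(r)=r^{N-p-1}\Big[(N-p)\Capa_p(B_1)-\hat\eta N\omega_N r^{p}\Big].
\]
I will choose $\hat\eta=\hat\eta(R)$ so small that simultaneously $\hat\eta N\omega_N R^{p}<\tfrac12(N-p)\Capa_p(B_1)$ and $\tfrac{N\omega_N}{\hat\eta}>2(N-p)\Capa_p(B_1)$. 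The first bound makes $g'>0$ on the whole of $(1,R]$, and the second makes $g'<0$ on a left-neighborhood of $1$ (more precisely, on $(r_0,1)$ with $r_0^{p}=\tfrac{(N-p)\Capa_p(B_1)\hat\eta}{N\omega_N}$ arbitrarily small). On the remaining interval $(0,r_0)$, $g$ is increasing and $g(0^+)=-\Capa_p(B_1)+\tfrac{\omega_N}{\hat\eta}>0$. Hence $g$ attains its minimum on $(0,R]$ uniquely at $r=1$ with $g(1)=0$.

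\textbf{Step 3: Linear growth and conclusion.} From the explicit expressions for $g'(1^\pm)$ I obtain
\[
g'(1^+)=(N-p)\Capa_p(B_1)-\hat\eta N\omega_N\;>\;0,\qquad -g'(1^-)=\tfrac{N\omega_N}{\hat\eta}-(N-p)\Capa_p(B_1)\;>\;0,
\]
so $g$ is locally linearly bounded below near $r=1$. Away from $1$, $g$ is continuous, strictly positive, and $g(r)/|r-1|$ extends continuously to a positive value at $r=1$ (equal to one of the one-sided derivatives), hence is bounded below on the compact set $(0,R]\setminus\{1\}$ by some $c=c(R)>0$. This yields the second claim. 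Combined with Step 1 and the rigidity of the isocapacitary inequality, it also gives the uniqueness of translates of $B_1$ as minimizers.

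The only subtlety I foresee is making sure the choice of $\hat\eta$ is compatible on both branches (so that $g$ has a genuine V-shape at $r=1$ with the minimum being strict and global on $(0,R]$), and treating the non-smoothness of $f_{\hat\eta}$ at $|\Omega|=\omega_N$ — but the two-sided estimate \eqref{e:feta} and the piecewise-linear structure make the one-sided derivative analysis above transparent.
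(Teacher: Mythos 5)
Your proof is correct and follows essentially the same approach as the paper: reduce to balls by the isocapacitary inequality, then analyze the scalar function $r\mapsto\mathscr{C}_{\hat\eta}(B_r)$ via its one-sided derivatives on $(0,1)$ and $(1,R]$, a computation the paper only sketches (deferring to \cite{DPMM}) but which you carry out explicitly. One very minor remark: your two stated smallness conditions on $\hat\eta$ do not by themselves force $g(0^+)=-\Capa_p(B_1)+\omega_N/\hat\eta>0$ (that needs the additional bound $\hat\eta<\omega_N/\Capa_p(B_1)$), but since you are free to shrink $\hat\eta$ further this is harmless.
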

	\begin{proof}
		Suppose that $\Omega$ is a minimizer of 
		$\mathscr{C}_{\hat{\eta}}$ in the class of sets contained in $B_{R}$. Let $r$ be such that $\vert B_r\vert=\vert\Omega\vert$.
		Then, by symmetrization 
		\begin{equation*}
			\mathscr{C}_{\hat{\eta}}(\Omega)=\Capa_p(\Omega)+f_{\hat{\eta}}(\vert\Omega\vert)\geq\Capa_p(B_r)+f_{\hat{\eta}}(\vert\Omega\vert)
			=\mathscr{C}_{\hat{\eta}}(B_r).
		\end{equation*}
		Moreover, the equality holds only if $\Omega$ is a translation
		of the ball $B_r$. So, any minimizer should be a translate of 
		a ball of a radius $r\in(0, R]$, and we need to prove that
		a function $g:(0,R]\rightarrow\mathds{R}_+$ defined as
		\begin{equation*}
			g(r):=\Capa_p(B_r)+f_{\hat{\eta}}(\vert B_r\vert)
			=\Capa(B_1)r^{n-p}+f_{\hat{\eta}}(\vert B_r\vert)
		\end{equation*}		 
		achieves its only minimum at $r=1$ if $\hat{\eta}$
		is small enough. This is a problem on a real line and
		the desired result can be obtained by examining
		the derivative of $g$ in separate cases $r\in(0,1)$ and $r\in[1,R]$.
		
		For more detail look at the proofs of Lemma 4.1 and Lemma 4.2 in \cite{DPMM}. The computation can be repeated almost verbatim, the only difference being scaling of capacity.
	\end{proof}	

	\subsection{A penalized minimum problem}
	The sequence in Theorem \ref{pcapSelPr} is obtained by solving the following minimum problem (see also \cite{AFM} where
	a similar penalized problem is introduced to deal with a nonlocal isoperimetric problem).
	\begin{equation} \label{pcappertpb}
		\min{\{\mathscr{C}_{\hat{\eta},j}(\Omega):\Omega\subset B_R\}},
	\end{equation}
	where 
	\[
	\mathscr{C}_{\hat{\eta},j}(\Omega)=\mathscr{C}_{\hat{\eta}}(\Omega)+\sqrt{\varepsilon_j^2+\sigma^2(\alpha(\Omega)-\varepsilon_j)^2}
	=\Capa_p(\Omega)+f_{\hat{\eta}}(\vert\Omega\vert)+\sqrt{\varepsilon_j^2+\sigma^2(\alpha(\Omega)-\varepsilon_j)^2}.
	\]

	\begin{lemma}
		There exists $\sigma_0=\sigma_0(N,R)>0$ such that for every $\sigma<\sigma_0$
		the minimum in \eqref{pcappertpb} is attained by a $p$-quasi-open\footnote{Recall that a set is said to be $p$-quasi-open if it is the zero level set of a $W^{1,p}$ function.} set $\Omega_j$.
		Moreover, perimeters of $\Omega_j$ are bounded independently on $j$.
	\end{lemma}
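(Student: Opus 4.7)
The plan is to apply the direct method of the calculus of variations for existence, and then to exploit quasi-minimality together with free boundary type density estimates for the uniform perimeter bound.

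For existence, I would take a minimizing sequence $\{\Omega_k\}\subset B_R$ for $\mathscr{C}_{\hat\eta,j}$. Testing against any fixed competitor (say $B_1$) gives a uniform bound on $\Capa_p(\Omega_k)$, so the associated capacitary potentials $u_k$, which satisfy $0\le u_k\le 1$ and $\|\nabla u_k\|_{L^p}^p=\Capa_p(\Omega_k)$, form a bounded sequence in $D^{1,p}(\mathds R^N)$. Passing to a subsequence, $u_k\rightharpoonup u$ weakly in $D^{1,p}$, strongly in $L^p_{\mathrm{loc}}$, and $p$-quasi-everywhere. Taking the $p$-quasi-continuous representative of $u$, I would define $\Omega_j:=\{u\ge 1\}$, which is $p$-quasi-open. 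Since $u$ is admissible for $\Capa_p(\Omega_j)$, weak lower semicontinuity of the $L^p$-norm yields $\Capa_p(\Omega_j)\le\liminf_k\Capa_p(\Omega_k)$. Standard compactness then gives $1_{\Omega_k}\to 1_{\Omega_j}$ in $L^1(B_R)$ along a further subsequence, and Lemma \ref{propasym}(ii) gives continuity of $\alpha$; hence $\mathscr{C}_{\hat\eta,j}(\Omega_j)\le\liminf_k\mathscr{C}_{\hat\eta,j}(\Omega_k)$ and $\Omega_j$ is a minimizer.

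For the uniform perimeter bound, I would first compare $\Omega_j$ to the contradicting set $\tilde\Omega_j$: by \eqref{pcapcontrseq} and $|\tilde\Omega_j|=\omega_N$,
\[
\mathscr{C}_{\hat\eta,j}(\Omega_j)\le\mathscr{C}_{\hat\eta,j}(\tilde\Omega_j)=\Capa_p(\tilde\Omega_j)+\varepsilon_j\le\Capa_p(B_1)+(1+\sigma^4)\varepsilon_j.
\]
Combined with the lower bounds from Lemma \ref{pcappropofCeta} and \eqref{e:feta}, this forces $|\Omega_j|\to\omega_N$ and $\Capa_p(\Omega_j)\to\Capa_p(B_1)$ as $j\to\infty$. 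Next, since both $f_{\hat\eta}$ and the square-root asymmetry penalization are Lipschitz in $|\Omega\Delta\Omega'|$ (the latter via Lemma \ref{propasym}(ii)), minimality of $\Omega_j$ translates into a uniform \emph{quasi-minimality} for the pure $p$-capacity: for every $\Omega'\subset B_R$ with $\Omega'\Delta\Omega_j\Subset B_R$,
\[
\Capa_p(\Omega_j)\le \Capa_p(\Omega')+C\,|\Omega_j\Delta\Omega'|,
\]
with $C=C(\hat\eta,\sigma,R)$ independent of $j$.

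The hard part is then to leverage this quasi-minimality into uniform Alt--Caffarelli type density estimates for the free boundary: constants $c_0,r_0>0$ independent of $j$ such that $c_0 r^N\le|B_r(x)\cap\Omega_j|\le(\omega_N-c_0)r^N$ for every $x\in\partial\Omega_j$ and $0<r<r_0$. This is essentially the content of the $p$-capacity free boundary theory of \cite{DP} cited in the introduction. Granted such density estimates, a standard Vitali covering argument applied to $\partial\Omega_j\subset B_R$ bounds $\mathcal H^{N-1}(\partial^*\Omega_j)$, and hence the perimeter $P(\Omega_j)$, uniformly in $j$. The passage from quasi-minimality to the density estimates is the main technical obstacle; the rest is standard direct-method machinery.
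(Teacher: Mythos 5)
Your existence argument has a genuine gap exactly at the step you label ``standard compactness.'' You claim that a subsequence of $1_{\Omega_k}$ converges to $1_{\Omega_j}$ in $L^1(B_R)$, but the $\Omega_k$ in your minimizing sequence carry no a priori bound on their perimeters, so the only compactness available for $1_{\Omega_k}$ is weak-$*$ in $L^\infty$; that is not enough to identify the limit with a characteristic function, nor to pass to the limit in $|\Omega_k|$ or in $\alpha(\Omega_k)$. And passing to the limit in $\alpha$ is unavoidable here, because the penalization $t\mapsto\sqrt{\varepsilon_j^2+\sigma^2(t-\varepsilon_j)^2}$ is not monotone in $t$, so one cannot circumvent the convergence of $\alpha(\Omega_k)$ by a one-sided semicontinuity trick the way one can for $f_{\hat\eta}(|\Omega_k|)$. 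Without strong $L^1$ convergence of the sets, your direct method does not close.

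The paper's proof exists precisely to fill this gap, and it does so in a more elementary way than your Alt--Caffarelli route: one first \emph{modifies} the minimizing sequence. Starting from $V_k$ with capacitary potential $v_k$, one enlarges it to a superlevel set $\hat V_k=\{v_k>s_k\}$ with $1-t_k<s_k<1$, $t_k=1/\sqrt{k}$. Because the volume penalization $f_{\hat\eta}$ decreases at rate at least $\hat\eta$ when the set grows, while the asymmetry penalization can only increase at rate $C\sigma$ (Lemma \ref{propasym}\ref{asymlip}), for $\sigma\ll\hat\eta$ this enlargement does not increase the energy, and the AM--GM inequality plus the coarea formula lets one choose a level $s_k$ at which $P(\hat V_k)\le C(N,R)$. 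This single device then delivers everything you were missing: the uniformly bounded perimeters give strong $L^1$ compactness of $1_{\hat V_k}$ (so the limit energy can be identified with that of a limit set, and $\alpha$ passes to the limit), and the same bound, together with lower semicontinuity of perimeter, gives the stated bound on $P(\Omega_j)$ outright. Your alternative route for the perimeter bound---quasi-minimality, then \cite{DP}-type density estimates, then a covering argument---is plausible but uses much heavier machinery from later in the paper (Lipschitz regularity, the representation of $\div(|\nabla v_j|^{p-2}\nabla v_j)$ as $q_{v_j}\mathcal{H}^{N-1}\mres\partial^*\Omega_j$), and in any case cannot rescue the existence step, since there the bounded-perimeter minimizing sequence is needed \emph{before} one even has a minimizer to which the regularity theory could apply.
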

		\begin{proof}
		The proof is almost verbatim repetition of the proof of 
		\cite[Lemma 4.3]{DPMM}. 		
			
		\noindent	
		\textbf{Step 1: finding minimizing sequence with  bounded perimters.}		
		We consider $\{V_k\}_{k\in\mathbb{N}}$ -- a minimizing sequence for $\mathscr{C}_{\hat{\eta},j}$, satisfying
		\begin{equation*}
			\mathscr{C}_{\hat{\eta},j}(V_k)\leq \inf{\mathscr{C}_{\hat{\eta},j}}+\frac{1}{k}.
		\end{equation*}
		We denote by $v_k$ the capacitary potentials of $V_k$, so $V_k=\{x\in B_R: v_k=1\}$.
		We take as a variation the slightly enlarged set $\tilde{V}_k$:
		\begin{equation*}
			\tilde{V}_k=\{x\in B_R: v_k>1-t_k\},
		\end{equation*}
		where $t_k=\frac{1}{\sqrt{k}}$.
		
		Note that the function $\tilde{v}_k=\frac{\min(v_k, 1-t_k)}{1-t_k}$ is in $D^{1,p}(\mathbb R^N)$ and  \(v_k=1\) on $\tilde{V}_k$, so we can bound the p-capacity of $\tilde{V}_k$ by $\int{\vert\nabla\tilde{v}_k\vert^p}dx$. Since $V_k$ is almost minimizing, we write
		\begin{equation*}
			\begin{aligned}
				&\int_{\{v_k<1\}}{\vert\nabla v_k\vert^p}dx+f_{\hat{\eta}}(|\{v_k=1\}|)+\sqrt{\varepsilon_j^2+\sigma^2(\alpha(\{v_k=1\})-\epsilon_j)^2}\\
				&\leq\int_{\{v_k<1-t_k\}}{\left\vert\nabla \left(\frac{v_k}{1-t_k}\right)\right\vert^p}dx+f_{\hat{\eta}}(|\{v_k\geq 1-t_k\}|)+\sqrt{\varepsilon_j^2+\sigma^2(\alpha(\{v_k\geq 1-t_k\})-\varepsilon_j)^2}+\frac{1}{k}.
			\end{aligned}	
		\end{equation*}
		We use \eqref{e:feta} and the fact that the function $t\mapsto\sqrt{\varepsilon_j^2+\sigma^2(t-\varepsilon_j)^2}$ is \(1\) Lipschitz to get
		\begin{equation*}
			\begin{aligned}
				&\int_{\{1-t_k<v_k<1\}}{\vert\nabla v_k\vert^p}dx+\hat\eta|\{1-t_k<v_k<1\}|\\
				&\leq\sigma\big(\vert\alpha(\{v_k\geq 1-t_k\}-\alpha(\{v_k=1\})\vert\big)+\frac{1}{k}+\int_{\{v_k<1-t_k\}}{\left(\left(\frac{1}{1-t_k}\right)^p-1\right)\vert\nabla v_k\vert^p}dx\\
				&\leq	C(R)\sigma\vert\{1-t_k<v_k\leq 1\}\vert+\frac{1}{k}+\left(\left(\frac{1}{1-t_k}\right)^p-1\right) \Capa_p(V_k)\\
				&\leq	C(R)\sigma\vert\{1-t_k<v_k\leq 1\}\vert+\frac{1}{k}+c(N,R)t_k,
			\end{aligned}	
		\end{equation*}
		where in the second inequality we used Lemma \ref{propasym}, \ref{asymlip}. Taking $\sigma<\frac{\hat\eta}{2C(R)}$, we obtain
		\begin{equation*}
			\int_{\{1-t_k<v_k<1\}}{\vert\nabla v_k\vert^p}dx+\frac{\hat \eta}{2}(|\{1-t_k<v_k<1\}|)\leq\frac{1}{k}+c(N,R)t_k.
		\end{equation*}
		We estimate the left-hand side from below, using the arithmetic-geometric  mean inequality, the Cauchy-Schwarz inequality, and the co-area formula.
		\begin{equation*}
			\begin{aligned}
				&\int_{\{1-t_k<v_k<1\}}{\vert\nabla v_k\vert^p}dx+\frac{\hat\eta}{2}(|\{1-t_k<v_k<1\}|)\\
				&\geq p\left(\int_{1-t_k<v_k<1}{\vert\nabla v_k\vert^p}dx\right)^\frac{1}{p}\left((p-1)\frac{\hat \eta}{2}(|\{1-t_k<v_k<1\}|)\right)^\frac{1}{p'}\\
				&\geq c{\hat\eta}^\frac{1}{p'}\int_{1-t_k<v_k<1}{\vert\nabla v_k\vert}dx
				=c{\hat\eta}^\frac{1}{p'}\int_{1-t_k}^1{P(\{v_k>s\})}ds.
			\end{aligned}	
		\end{equation*}
where \(P(E)\) denotes the De Giorgi perimeter of a set \(E\). Hence, there exists a level $1-t_k<s_k<1$ such that for $\hat{V}_k=\{v_k>s_k\}$
		\begin{equation*}
			P(\hat{V}_k)\leq\frac{1}{t_k}\int_{1-t_k}^1{P(\{v_k>s\})}ds\leq\frac{1}{t_kc{\hat\eta}^\frac{1}{p'}k}+c(N,R)=\frac{1}{c{\hat\eta}^\frac{1}{p'}\sqrt{k}}+c(N,R).
		\end{equation*}	
		where in the last equality we have used that \(t_k=\frac{1}{\sqrt{k}}\).  
		These $\hat{V}_k$ will give us the desired "good" minimizing sequence, indeed 
		\begin{equation*}
			\begin{split}
				\mathscr{C}_{\hat{\eta},j}(\hat{V}_k)				
				\leq\mathscr{C}_{\hat{\eta},j}(V_k)+f_{\hat{\eta}}(\vert\{ v_k>s_k\}\vert)
				&-f_{\hat{\eta}}(\vert \{v_k=1\}\vert)\\
				&+C\sigma\vert\{1-s_k<v_k<1\}\vert \leq\mathscr{C}_{\hat{\eta},j}(V_k),
			\end{split}
		\end{equation*}
		where in the first inequality we have used that \(\hat V_k\subset V_k\) and in the second that, thanks to our choice of \(\sigma\),
		\[
		f_{\hat{\eta}}(\vert \{v_k>s_k\}\vert)-f_{\hat{\eta}}(\vert \{v_k=1\}\vert)
				+C\sigma\vert\{1-s_k<v_k<1\}\le (C\sigma-\hat\eta)\vert\{1-s_k<v_k<1\}\le 0.
		\]
		
\medskip
\noindent
\textbf{Step 2: Existence of a minimizer.}		
		Since $\{\hat{V}_k\}_k$ is a sequence with  equibounded perimeter,s there exists a Borel set $\hat{V}_\infty$ such that up to a (not relabelled) subsequence 
		$$ 1_{\hat{V}_k}\rightarrow 1_{\hat{V}_\infty} \text{ in } L_1(B_R)\text{ and a.e. in }B_R, \qquad P(\hat{V}_\infty)\leq C(N,R).$$
We want to show that $\hat{V}_\infty$ is a minimizer for $\mathcal{C}_{\eta,j}$.	We set $\hat{v}_k=\frac{\min(v_k, s_k)}{s_k}$ and we note that they are the capacitary potentials of \(\hat V_k\). Moreover  the sequence $\{\hat{v}_k\}_k$ is bounded in $D^{1,p}(\mathbb R^N)$. Thus, there exists a function $\hat{v}\in D^{1,p}(\mathbb R^N)$ such that up to a (not relabelled) subsequence $$\hat{v}_k\rightarrow \hat{v} \text{ strongly in }L^p(B_R)\text{ and a.e. in }B_R.$$
		Let us define  $\hat{V}=\{x:\hat{v}=1\}$, we want to show that \(\hat V\) is a minimizer. First,  note that
		$$1_{\hat{V}}(x)\geq \limsup{1_{\hat{V}_k}}(x)=1_{\hat{V}_\infty}(x)\qquad\text{ for a.e. }x\in B_R,$$
hence \(|\hat V_\infty \setminus \hat V|=0\). Moreover, by the lower semicontinuity of Dirichlet integral, the monotonicity of \(f_{\hat{\eta}}\) and the continuity of $\alpha$ with respect to the $L^1$ convergence, we have
		\begin{equation}
		\begin{aligned} \label{pcapVinftyismin}
		\inf\mathscr{C}_{\hat{\eta},j}&=\lim_k \int \vert\nabla\hat{v}_k\vert^p+f_{\hat{\eta}}(\vert\hat{V}_k\vert)+\sqrt{\varepsilon_j^2+\sigma^2(\alpha(\hat{V}_k)-\varepsilon_j)^2}
			\\
			&\ge \Capa_p(\hat{V})+f_{\hat{\eta}}(\vert\hat{V}_\infty\vert)
			+\sqrt{\varepsilon_j^2+\sigma^2(\alpha(\hat{V}_\infty)-\varepsilon_j)^2}
			\ge \Capa_p(\hat{V})+f_{\hat{\eta}}(\vert\hat{V}\vert).
		\end{aligned}	
		\end{equation}
		Hence
		\begin{equation*}
		\begin{aligned} 
			\Capa_p(\hat{V})&+f_{\hat{\eta}}(\vert\hat{V}_\infty\vert)
			+\sqrt{\varepsilon_j^2+\sigma^2(\alpha(\hat{V}_\infty)-\varepsilon_j)^2}
			\leq\inf\mathscr{C}^R_{\hat{\eta},j}(\Omega)\\
			&\leq \Capa_p(\hat{V})+f_{\hat{\eta}}(\vert\hat{V}\vert)
			+\sqrt{\varepsilon_j^2+\sigma^2(\alpha(\hat{V})-\varepsilon_j)^2}.
		\end{aligned}	
		\end{equation*}
		Using Lemma \ref{propasym} \ref{asymlip}   we get 
		$$f_{\hat{\eta}}(\vert\hat{V}_\infty\vert)-f_{\hat{\eta}}(\vert\hat{V}\vert)\leq C\sigma\vert\hat{V}\Delta\hat{V}_\infty\vert=C\sigma\vert\hat{V}\setminus \hat{V}_\infty\vert.$$
		Since $\vert\hat{V}\vert\geq\vert\hat{V}_\infty\vert$, \eqref{e:feta} and  our choice of \(\sigma\)   yield
		$$\hat \eta\vert\hat{V}\backslash\hat{V}_\infty\vert\leq f_{\hat{\eta}}(\vert\hat{V}_\infty\vert)-f_{\hat{\eta}}(\vert\hat{V}\vert)\leq C\sigma\vert\hat{V}\backslash\hat{V}_\infty\vert\le \frac{\hat{\eta}}{2}\vert\hat{V}\backslash\hat{V}_\infty\vert,$$
	from which we conclude that \(|\hat V\Delta \hat V_\infty|=0\) and thus, by \eqref{pcapVinftyismin} that \(\hat V\) is the desired minimizer.
	\end{proof}

	\subsection{First properties of the minimizers}
	
	\begin{lemma} \label{pcapfirstpropmin}
		Let $\{\Omega_j\}$ be a sequence of minimizers for \eqref{pcappertpb}.
		Then the following properties hold:
		\begin{enumerate}[label=\textup{(\roman*)}]
			\item 
			$\vert\alpha(\Omega_j)-\varepsilon_j\vert\leq 3\sigma\varepsilon_j$;
			\item 
			$\big\vert\vert\Omega_j\vert-\vert B_1\vert\big\vert\leq C\sigma^4\varepsilon_j$;
			\item\label{pcap L1 convergence to the ball}
						 up to translations $\Omega_j\rightarrow B_1$ in $L^1$,
			\item 
			$0\leq\mathscr{C}_{\hat{\eta}}(\Omega_j)-\mathscr{C}_{\hat{\eta}}(B_1)\leq\sigma^4\varepsilon_j$.
		\end{enumerate}
	\end{lemma}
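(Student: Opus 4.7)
The strategy is to use the original contradicting set $\tilde{\Omega}_j$ as a competitor against the minimizer $\Omega_j$ and derive a single master inequality from which all four properties follow. Using that $|\tilde{\Omega}_j|=\omega_N$ gives $f_{\hat{\eta}}(|\tilde{\Omega}_j|)=0$, that $\alpha(\tilde{\Omega}_j)=\varepsilon_j$ makes the penalty term collapse to $\varepsilon_j$, and the standing hypothesis \eqref{pcapcontrseq}, I would bound
\[
\mathscr{C}_{\hat{\eta},j}(\Omega_j)\le \mathscr{C}_{\hat{\eta},j}(\tilde{\Omega}_j)\le \Capa_p(B_1)+(1+\sigma^4)\varepsilon_j.
\]
On the other hand, since $B_1$ minimizes $\mathscr{C}_{\hat{\eta}}$ among subsets of $B_R$ by Lemma \ref{pcappropofCeta}, one has
\[
\mathscr{C}_{\hat{\eta},j}(\Omega_j)\ge \Capa_p(B_1)+\sqrt{\varepsilon_j^2+\sigma^2(\alpha(\Omega_j)-\varepsilon_j)^2}.
\]
Chaining these two inequalities yields the master estimate
\[
\sqrt{\varepsilon_j^2+\sigma^2(\alpha(\Omega_j)-\varepsilon_j)^2}\le (1+\sigma^4)\varepsilon_j.
\]

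Properties (i) and (iv) then fall out at once. Squaring the master estimate and using $(1+\sigma^4)^2-1\le 3\sigma^4$ for $\sigma$ small gives $|\alpha(\Omega_j)-\varepsilon_j|\le \sqrt{3}\,\sigma\varepsilon_j\le 3\sigma\varepsilon_j$, which is (i). For (iv), the nonnegativity part is again Lemma \ref{pcappropofCeta}, while subtracting the trivial lower bound $\sqrt{\varepsilon_j^2+\cdots}\ge \varepsilon_j$ from the upper bound on $\mathscr{C}_{\hat{\eta},j}(\Omega_j)$ yields $\mathscr{C}_{\hat{\eta}}(\Omega_j)-\mathscr{C}_{\hat{\eta}}(B_1)\le \sigma^4\varepsilon_j$.

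For (ii), I would let $r_j$ satisfy $|B_{r_j}|=|\Omega_j|$. Schwarz symmetrization gives $\Capa_p(\Omega_j)\ge \Capa_p(B_{r_j})$, and since $f_{\hat{\eta}}$ of the equal volumes cancels, $\mathscr{C}_{\hat{\eta}}(\Omega_j)\ge \mathscr{C}_{\hat{\eta}}(B_{r_j})$. Combining this with (iv) and the second assertion of Lemma \ref{pcappropofCeta} produces $c|r_j-1|\le \sigma^4\varepsilon_j$, and then $\big||\Omega_j|-|B_1|\big|=\omega_N|r_j^N-1|\le C(N,R)|r_j-1|\le C\sigma^4\varepsilon_j$ since $r_j\in(0,R]$.

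Finally, (iii) follows by bootstrapping (ii), (iv), and Theorem \ref{t:fmp}. From (ii) I get $r_j\to 1$ and hence $\Capa_p(B_{r_j})\to \Capa_p(B_1)$; combined with (iv) and $f_{\hat{\eta}}(|\Omega_j|)\to 0$ this forces $\Capa_p(\Omega_j)-\Capa_p(B_{r_j})\to 0$. The non-sharp quantitative isocapacitary inequality Theorem \ref{t:fmp}, applied with the reference ball $B_{r_j}$, then yields $\mathcal{A}(\Omega_j)\to 0$. Picking near-optimal balls $B_{r_j}(y_j)$ in the definition of $\mathcal{A}$ and using $r_j\to 1$, one assembles translates $B_1(y_j)$ with $|\Omega_j\Delta B_1(y_j)|\to 0$, which is the desired $L^1$ convergence up to translations. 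The whole argument is essentially bookkeeping organized around the master estimate; no step is technically delicate, and the only mild care is needed in the final step to package the asymmetry decay together with volume convergence into the translation sequence.
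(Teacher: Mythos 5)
Your proof is correct, and parts (i), (ii), and (iv) follow the same competitor argument the paper has in mind: compare $\mathscr{C}_{\hat\eta,j}(\Omega_j)$ against the original sequence $\tilde\Omega_j$ from above and against the ball $B_1$ from below (via Lemma~\ref{pcappropofCeta}), and extract the master estimate $\sqrt{\varepsilon_j^2+\sigma^2(\alpha(\Omega_j)-\varepsilon_j)^2}\le(1+\sigma^4)\varepsilon_j$ from which everything falls out. For (ii), using Schwarz symmetrization to pass to $B_{r_j}$ and then the radial growth estimate $\mathscr{C}_{\hat\eta}(B_r)-\mathscr{C}_{\hat\eta}(B_1)\ge c|r-1|$ from Lemma~\ref{pcappropofCeta} is exactly the intended route.

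Where you genuinely diverge from the paper is in (iii). The paper (deferring to \cite[Lemma~4.4]{DPMM}) explicitly appeals to the fact that the $\Omega_j$ have \emph{uniformly bounded perimeter} (established in the preceding existence lemma), extracts a subsequence converging in $L^1(B_R)$ by $BV$-compactness, and then identifies the limit as a unit ball using lower semicontinuity of the capacity (via the capacitary potentials, as in Step~2 of the existence proof) together with the fact that $B_1$ is the unique minimizer of $\mathscr{C}_{\hat\eta}$. You instead bypass compactness entirely: you deduce $\Capa_p(\Omega_j)-\Capa_p(B_{r_j})\to 0$ from (ii), (iv) and Lemma~\ref{pcappropofCeta}, feed this into the non-sharp quantitative inequality of Theorem~\ref{t:fmp} to get $\mathcal A(\Omega_j)\to 0$, and read off the $L^1$ convergence of translates directly from near-optimal Fraenkel balls combined with $r_j\to 1$. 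Your route is shorter and avoids the (mildly delicate) lower semicontinuity argument for $\Capa_p$ under $L^1$ convergence of sets; the paper's route has the advantage of not re-invoking the FMP theorem and of being the same compactness machinery already set up in the existence step. Both are valid, and the conclusion ``up to translations'' is handled correctly in your version since the near-optimal balls $B_{r_j}(y_j)$ supply the translations.
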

	\begin{proof}
	The lemma follows easily from Lemma \ref{pcappropofCeta}.
	To prove \ref{pcap L1 convergence to the ball} we need
	to recall that the sets $\Omega_j$ have bounded perimeter.
	
	For more details see the proof of \cite[Lemma 4.4]{DPMM}.
	\end{proof}
	\section{Proof of Theorem \ref{pcapSelPr}: Regularity}\label{reg}
	
In this section, we show that  the sequence of minimizers of  \eqref{pcappertpb} converges smoothly to  the unit ball. This will be done by relying on the regularity theory for free boundary problems established in \cite{DP}.
	
	\subsection{Linear growth away from the free boundary} Let $u_j$ be the capacitary potential for $\Omega_j$, a minimizer of \eqref{pcappertpb}.	Let us also introduce $v_j:=1-u_j$, so that $\Omega_j=\{v_j=0\}$. Following \cite{DP} we are going to show that 
	\[
	v_j(x)\sim \dist(x,\Omega_j).
	\]
where the implicit constant depends only on \(R\). The above estimate is obtained by suitable comparison estimates. We will need to have
some compactness properties, so we first prove H\"older continuity,
also with the constant depending only on \(R\).
	
	\subsubsection{H\"older continuity} 
	
	The proof is based on establishing a decay estimate for the integral oscillation of $u_j$ 
	and it is almost identical to the case of $2$-capacity
	(see \cite[Lemma 5.8]{DPMM}).

	We are going to use the following growth result for $p$-harmonic
	functions. The proof can be found, for example, in \cite[Theorem 7.7]{G}.	
	
	\begin{lemma} \label{l:pharmgrowth}
		Suppose $w\in W^{1,2}(\Omega)$ is $p$-harmonic, $x_0\in\Omega$.
		Then there exists a constant $c=c(N,p)$, $0<\beta\leq 1$ such that for any balls $B_{r_1}(x_0)\subset B_{r_2}(x_0)\Subset\Omega$	
		\begin{equation*}
			\fint_{B_{r_1}(x_0)}{\left\vert \nabla w\right\vert^p}\leq c\left(\frac{r_1}{r_2}\right)^{p\beta-p}\fint_{B_{r_2}(x_0)}{\left\vert\nabla w\right\vert^p}.
		\end{equation*}
	\end{lemma}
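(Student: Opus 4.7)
The proof rests on the classical interior $C^{1,\beta}$ regularity theory for $p$-harmonic functions (Uhlenbeck, DiBenedetto, Tolksdorf): any weak solution of $\div(|\nabla w|^{p-2}\nabla w)=0$ belongs to $C^{1,\beta}_{\mathrm{loc}}$ for some $\beta=\beta(N,p)\in(0,1]$. My strategy is to reduce the claim to an interior $L^\infty$-bound on $|\nabla w|$ by rescaling, and then to exploit the fact that $(r_1/r_2)^{p\beta-p}\geq 1$ because $p\beta-p\leq 0$, which makes the target inequality essentially a weak form of an $L^\infty$ estimate.

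Concretely, I would normalize by translation so that $x_0=0$, and set $\tilde w(y):=w(r_2 y)/r_2$ for $y\in B_1$. The $p$-Laplace equation is invariant under this scaling, so $\tilde w$ is $p$-harmonic on $B_1$. From the interior $C^{1,\beta}$ regularity (combined with a routine covering/iteration argument to pass from H\"older continuity of $\nabla\tilde w$ to an $L^\infty$--$L^p$ bound on gradients) one has
\[
\|\nabla\tilde w\|_{L^\infty(B_{1/2})}^p\leq C(N,p)\fint_{B_1}|\nabla\tilde w|^p\,dy.
\]
Undoing the rescaling via $\nabla\tilde w(y)=\nabla w(r_2 y)$ yields
\[
\|\nabla w\|_{L^\infty(B_{r_2/2}(x_0))}^p\leq C\fint_{B_{r_2}(x_0)}|\nabla w|^p\,dx.
\]

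To close the argument I would split into two cases. For $r_1\leq r_2/2$,
\[
\fint_{B_{r_1}(x_0)}|\nabla w|^p\leq \|\nabla w\|_{L^\infty(B_{r_2/2}(x_0))}^p\leq C\fint_{B_{r_2}(x_0)}|\nabla w|^p,
\]
and the stated estimate follows because $(r_1/r_2)^{p\beta-p}\geq 1$. For $r_2/2<r_1\leq r_2$ it is enough to note that
\[
\fint_{B_{r_1}}|\nabla w|^p\leq (r_2/r_1)^N\fint_{B_{r_2}}|\nabla w|^p\leq 2^N\fint_{B_{r_2}}|\nabla w|^p,
\]
and again absorb the constant using $(r_1/r_2)^{p\beta-p}\geq 1$.

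The real difficulty is entirely concentrated in the $C^{1,\beta}$ regularity input itself, which is a deep classical result with a long history; everything after that is purely bookkeeping. If one instead wanted a self-contained argument producing the sharp exponent $p\beta-p$ (not merely the $L^\infty$-based reduction above), one would need a Campanato-type iteration: comparing $w$ on dyadic balls with solutions of a linearized constant-coefficient problem and iterating the decay of $\fint_{B_\rho}|\nabla w-(\nabla w)_\rho|^p$ to extract the H\"older exponent. For the statement at hand this refinement is not needed, precisely because the inequality is insensitive to the actual value of $\beta\in(0,1]$.
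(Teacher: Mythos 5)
Your proof is correct, but it takes a genuinely different route from the paper's. The paper does not prove this lemma itself: it cites \cite[Theorem 7.7]{G}, which establishes the decay estimate for functions in the De Giorgi class by a Caccioppoli-type iteration, and (as the paper's own remark points out) this yields only \emph{some} $\beta\in(0,1)$. You instead invoke the interior $C^{1,\beta}$ regularity theory for $p$-harmonic functions (Uhlenbeck/DiBenedetto/Tolksdorf) to get the $L^\infty$--$L^p$ gradient estimate, which after rescaling and the dichotomy $r_1\le r_2/2$ versus $r_2/2<r_1\le r_2$ delivers exactly the claimed inequality with the best exponent $\beta=1$. Your key observation --- that $p\beta-p\le0$ together with $r_1/r_2\le1$ makes $(r_1/r_2)^{p\beta-p}\ge1$, so that the stated conclusion reduces to the bounded comparison $\fint_{B_{r_1}}|\nabla w|^p\le c\fint_{B_{r_2}}|\nabla w|^p$ --- is accurate and is essentially what the paper's remark (``one can prove that for $p$-harmonic functions the inequality holds for $\beta=1$, but we are not going to need that'') alludes to. The trade-off is clear: your route is shorter but imports a substantially deeper regularity input, whereas the cited De Giorgi-class argument is more self-contained and suffices for the application downstream. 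One minor point: the parenthetical ``routine covering/iteration argument to pass from H\"older continuity of $\nabla\tilde w$ to an $L^\infty$--$L^p$ bound on gradients'' slightly obscures matters; the cleanest reference is to cite the interior gradient bound $\|\nabla w\|_{L^\infty(B_{1/2})}\le C(N,p)\bigl(\fint_{B_1}|\nabla w|^p\bigr)^{1/p}$ directly in the form it is usually stated, rather than re-deriving it from H\"older continuity of the gradient, since the latter route requires a normalization that already presupposes a comparable estimate.
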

	\begin{rem}
		In \cite{G} the result is proven for the functions in De Giorgi class.
		One can prove that in the case of $p$-harmonic functions the
		inequality holds for $\beta=1$, but we are not going to need that.
	\end{rem}
		
	To prove H\"{o}lder continuity of $u_j$  we will use  several times the following comparison estimates. 
	\begin{lemma}\label{l:varpcap}Let \(u_j\) be the capacitary potential of  a minimizer for \eqref{pcappertpb}. Let $A\subset B_R$ be an open set with Lipschitz  boundary and  let $w\in W^{1,p}(\mathds{R}^n)$ coincide with $u_j$ on the boundary of $A$ in the sense of traces.

		Then
		$$\int_A{\vert\nabla u_j\vert^p}dx-\int_A{\vert\nabla w\vert^p}dx\leq \left(\frac{1}{\hat \eta}+C\sigma\right)\big| A\cap \left(\{u=1\}\Delta\{w=1\}\right)\big|.$$
		Moreover, if $u_j\leq w\leq 1$ in $A$, then
		$$\int_A{\vert\nabla u_j\vert^p}dx+\frac{\hat \eta}{2}\big|A\cap \left(\{u=1\}\Delta\{w=1\}\right)\big|\leq\int_A{\vert\nabla w\vert^p}dx,$$
		provided \(\sigma\le \sigma(N,R)\).
	\end{lemma}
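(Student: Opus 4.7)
The plan is to use $w$ to build a competitor for $\Omega_j$ in the penalized problem \eqref{pcappertpb} and then read off the two inequalities from the minimality of $\Omega_j$. Concretely, I would define
\[
\tilde u := \begin{cases} w & \text{in } A, \\ u_j & \text{in } \mathds{R}^n \setminus A, \end{cases}
\]
(replacing $w$ by $\min(\max(w,0),1)$ beforehand, which only decreases the Dirichlet energy and does not affect the symmetric difference appearing on the right-hand side). Since $w = u_j$ on $\partial A$ in the sense of traces, $\tilde u \in W^{1,p}(\mathds{R}^n)$, and the set $\tilde \Omega := \{\tilde u = 1\} = (A \cap \{w=1\}) \cup (\Omega_j \setminus A)$ is contained in $B_R$, so it is an admissible competitor. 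By construction,
\[
\tilde \Omega \,\Delta\, \Omega_j = A \cap \bigl(\{u_j = 1\} \,\Delta\, \{w=1\}\bigr).
\]

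\textbf{First inequality.} Since $\tilde u \equiv 1$ on $\tilde \Omega$, the definition of capacity gives
\[
\Capa_p(\tilde \Omega) \leq \int |\nabla \tilde u|^p = \int_{A} |\nabla w|^p + \int_{\mathds{R}^n \setminus A} |\nabla u_j|^p,
\]
while $\Capa_p(\Omega_j) = \int |\nabla u_j|^p$. Subtracting and using the minimality inequality $\mathscr{C}_{\hat\eta, j}(\Omega_j) \leq \mathscr{C}_{\hat\eta, j}(\tilde \Omega)$ yields
\[
\int_A |\nabla u_j|^p - \int_A |\nabla w|^p \leq \bigl[f_{\hat\eta}(|\tilde \Omega|) - f_{\hat\eta}(|\Omega_j|)\bigr] + \bigl[S(\alpha(\tilde \Omega)) - S(\alpha(\Omega_j))\bigr],
\]
where $S(\tau) := \sqrt{\varepsilon_j^2 + \sigma^2(\tau-\varepsilon_j)^2}$. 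The bracketed $f_{\hat\eta}$-term is controlled by the upper bound in \eqref{e:feta} by $\tfrac{1}{\hat\eta}|\tilde\Omega \,\Delta\, \Omega_j|$, while $S$ is $\sigma$-Lipschitz and Lemma \ref{propasym}\ref{asymlip} gives $|\alpha(\tilde \Omega)-\alpha(\Omega_j)| \leq C(R)|\tilde \Omega \,\Delta\, \Omega_j|$; together these yield the claim.

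\textbf{Second inequality.} If $u_j \leq w \leq 1$ in $A$, then $\{u_j=1\} \cap A \subset \{w=1\} \cap A$, hence $\tilde \Omega \supset \Omega_j$ and $|\tilde \Omega| \geq |\Omega_j|$. The lower bound in \eqref{e:feta} now gives a bound in the opposite direction,
\[
f_{\hat\eta}(|\tilde\Omega|) - f_{\hat\eta}(|\Omega_j|) \leq -\hat\eta\, |\tilde \Omega \,\Delta\, \Omega_j|,
\]
while the asymmetry term is still bounded by $C\sigma\, |\tilde \Omega \,\Delta\, \Omega_j|$. Choosing $\sigma = \sigma(N,R)$ small enough that $C\sigma \leq \hat\eta/2$, one obtains $-\frac{\hat\eta}{2}|\tilde \Omega \,\Delta\, \Omega_j|$ on the right-hand side of the inequality from Step~1, and rearranging delivers the second claim.

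\textbf{Main point.} There is no real obstacle: the argument is the standard ``glue a competitor and use minimality'' scheme familiar from free-boundary problems. The only subtle bookkeeping is the sign analysis for $f_{\hat\eta}$ — the upper Lipschitz bound in \eqref{e:feta} produces the $1/\hat\eta$ factor in the general case, whereas the monotonicity of $f_{\hat\eta}$ (the lower bound in \eqref{e:feta}) is precisely what produces the favorable $-\hat\eta$ term when $w \geq u_j$, and this is the reason for introducing the monotone penalization $f_{\hat\eta}$ in the first place. The smallness assumption $\sigma \leq \sigma(N,R)$ is needed only to absorb the non-monotone asymmetry perturbation.
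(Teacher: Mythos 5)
Your proof is correct and takes exactly the paper's approach (which defers to \cite[Lemma 5.5]{DPMM}): glue $w$ into $u_j$ on $A$ to form $\tilde u$, take $\tilde\Omega:=\{\tilde u=1\}$ as the competitor in \eqref{pcappertpb}, and extract both inequalities from minimality together with the two Lipschitz bounds in \eqref{e:feta} and the $\sigma$-Lipschitz bound on $\tau\mapsto\sqrt{\varepsilon_j^2+\sigma^2(\tau-\varepsilon_j)^2}$ combined with Lemma \ref{propasym}\ref{asymlip}. One small inaccuracy in your preamble: truncating $w$ to $[0,1]$ replaces $\{w=1\}$ by $\{w\ge 1\}$ and thus \emph{can} change $|A\cap(\{u_j=1\}\Delta\{w=1\})|$, so the claim that this ``does not affect the symmetric difference'' is not literally correct; however this is harmless, as in the only place the lemma is invoked $w$ is a $p$-harmonic extension of $u_j$ and already satisfies $0\le w\le 1$ by the maximum principle.
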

	\begin{proof}
	The proof is the same as the proof of \cite[Lemma 5.5]{DPMM},
	modulo changing exponents from $2$ to $p$. The idea is to consider
	$\tilde{u}$ defined as
		$$
			\begin{cases}
				\tilde{u}=w\qquad&\text{ in }A\\
				\tilde{u}=u&\text{ else}
			\end{cases}
		$$
	and take $\tilde{\Omega}=\{\tilde{u}=1\}$ as a comparison domain.	
	\end{proof}
	\begin{rem}\label{varcapharm}
		Note that if $w$ is $p$-harmonic in $A$, then by Lemma \ref{l:ineq for difference of powers} 
 		\begin{itemize}
 			\item if $p\geq 2$, we have
 			\begin{equation*}
 				\cpp{A}{u}-\cpp{A}{w}\geq c\cpp{A}{(u-w)}; 
 			\end{equation*}
 			\item if $1<p<2$, then
 			\begin{equation*}
 				\cpp{A}{u}-\cpp{A}{w}\geq c\int_A{\vert\nabla(u-w)\vert^2\left(\vert\nabla w\vert^2+\vert \nabla(u-w)\vert^2\right)^{\frac{p-2}{2}}}\,dx. 
 			\end{equation*}
 		\end{itemize}
		Hence the first inequality from the lemma becomes
		\begin{itemize}
		\item for $p\geq 2$
		\begin{equation}\label{e:pharmonic_p bigger than 2}
		\cpp{A}{(u-w)}\leq C(p)\left(\frac{1}{\hat \eta}+C\sigma\right)\big| A\cap\left(\{u=1\}\Delta\{w=1\}\right)\big|;
		\end{equation}
		\item for $1<p<2$ 
		\begin{equation}\label{e:pharmonic_p less than 2}
		\begin{split}
		&\int_A{\vert\nabla(u-w)\vert^2\left(\vert\nabla w\vert^2+\vert \nabla(u-w)\vert^2\right)^{\frac{p-2}{2}}}\,dx\\
		&\qquad\leq C(p)\left(\frac{1}{\hat \eta}+C\sigma\right)\big| A\cap\left(\{u=1\}\Delta\{w=1\}\right)\big|.
		\end{split}
		\end{equation}
		\end{itemize}
	\end{rem}

Let us also recall the following technical result.
	\begin{lemma}[Lemma 5.13 in \cite{GM}] \label{techlemmagrowth}
		Let $\phi:\mathds{R}^+\rightarrow\mathds{R}^+$ be a non-decreasing function satisfying
		$$\phi(\rho)\leq A\left[\left(\frac{\rho}{R}\right)^\alpha+\varepsilon\right]\phi(R)+BR^\beta,$$
		for some $A,\alpha,\beta>0$, with $\alpha>\beta$ and for all $0<\rho\leq R\leq R_0$, where $R_0>0$ is given. 
		Then there exist constants $\varepsilon_0=\varepsilon_0(A,\alpha,\beta)$ and $c=c(A,\alpha,\beta)$ 
		such that if $\varepsilon\leq\varepsilon_0$, we have
		$$\phi(\rho)\leq c\left[\frac{\phi(R)}{R^\beta}+B\right]\rho^\beta$$
		for all $0\leq\rho\leq R\leq R_0$.
	\end{lemma}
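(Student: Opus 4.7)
The plan is to prove this by iterating the hypothesis along a geometric sequence of radii $R, \tau R, \tau^2 R,\dots$ for a suitably small $\tau \in (0,1)$. Since $\alpha > \beta$, I can fix any intermediate exponent $\gamma \in (\beta, \alpha)$ and then choose $\tau$ small enough that $A\tau^\alpha \le \tfrac12\tau^\gamma$. Setting $\varepsilon_0 := \tau^\gamma/(2A)$, the hypothesis applied at scale $\rho = \tau R$ then reads
\[
\phi(\tau R) \le \tau^\gamma\,\phi(R) + B R^\beta \qquad \text{for all } R \le R_0.
\]

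Next, I would iterate this contraction $k$ times. A straightforward induction gives
\[
\phi(\tau^k R) \le \tau^{k\gamma}\phi(R) + B R^\beta \sum_{j=0}^{k-1}\tau^{j\gamma + (k-1-j)\beta}.
\]
Factoring $\tau^{(k-1)\beta}$ out of the sum and using $\gamma > \beta$, the geometric series $\sum_j \tau^{j(\gamma-\beta)}$ converges to a constant depending only on $\tau, \gamma, \beta$ (hence only on $A,\alpha,\beta$). Absorbing $\tau^{k\gamma} \le \tau^{k\beta}$ into the first term, this yields
\[
\phi(\tau^k R) \le C\bigl[\phi(R)/R^\beta + B\bigr]\,(\tau^k R)^\beta.
\]

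Finally, for an arbitrary $\rho \in (0,R]$, I would pick the unique integer $k \ge 0$ with $\tau^{k+1}R < \rho \le \tau^k R$ and use the monotonicity of $\phi$ together with $(\tau^k R)^\beta \le \tau^{-\beta}\rho^\beta$ to pass from the discrete scales back to continuous ones, producing the asserted bound with $c = c(A,\alpha,\beta)$.

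The only delicate point is tracking the constants through the iteration cleanly enough to recover the sharp $\rho^\beta$ scaling rather than $\rho^\gamma$; this is why it is important to separate the $\tau^{\gamma - \beta}$ factors (which generate a convergent geometric series) from the common $\tau^{k\beta}$ factor before summing. Everything else is routine in the style of Campanato-type iteration lemmas.
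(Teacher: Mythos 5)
Your argument is correct, and it reproduces the standard Campanato-type iteration proof of this lemma. The paper itself simply cites \cite[Lemma 5.13]{GM} and gives no proof, so there is nothing to contrast it with; the choice of intermediate exponent $\gamma\in(\beta,\alpha)$, the smallness condition $2A\tau^{\alpha-\gamma}\le 1$ with $\varepsilon_0=\tau^\gamma/(2A)$, the discrete iteration along $\tau^k R$, and the interpolation back to arbitrary $\rho$ via monotonicity are all exactly the steps of the textbook proof, and the bookkeeping (factoring out $\tau^{(k-1)\beta}$ to expose a convergent geometric series in $\tau^{\gamma-\beta}$, then using $\tau^{k\gamma}\le\tau^{k\beta}$) is handled correctly.
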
	
		
	We are now ready to prove H\"older continuity of $u_j$.	
	
	\begin{lemma} \label{l:pcapholder_continuity} There exists \(\alpha\in (0,1/2)\) such that the capacitary potential of every minimizer of \eqref{pcappertpb} satisfies 
		$u_j\in C^{0,\alpha}(\overline{B_R})$. Moreover, the H\"older norm is bounded by a constant independent  on $j$.
	\end{lemma}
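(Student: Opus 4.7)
The plan is to establish a Campanato-type decay estimate for $\int_{B_\rho(x_0)}|\nabla u_j|^p$ that is uniform in $j$, and then to deduce H\"older continuity of $u_j$ via the Morrey--Campanato embedding. All constants appearing below will depend only on $N$, $p$, $R$, $\hat\eta$, and the fixed $\sigma$, so that the uniformity in $j$ will be automatic.

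Fix $x_0 \in \overline{B_R}$ and a small radius $r>0$, and let $w$ denote the $p$-harmonic replacement of $u_j$ in $B_r(x_0)$, i.e.\ $w=u_j$ outside $B_r(x_0)$ with $w$ $p$-harmonic inside. For $p\geq 2$, inequality \eqref{e:pharmonic_p bigger than 2} in Remark \ref{varcapharm}, combined with the trivial estimate on the measure of the symmetric difference, gives
\[
\int_{B_r(x_0)} |\nabla(u_j-w)|^p\,dx \;\leq\; C\, r^N.
\]
On the other hand, Lemma \ref{l:pharmgrowth} applied to $w$, rewritten in integral form, yields
\[
\int_{B_\rho(x_0)} |\nabla w|^p\,dx \;\leq\; C\bigl(\rho/r\bigr)^{N + p\beta - p}\int_{B_r(x_0)}|\nabla w|^p\,dx\qquad (0<\rho\leq r),
\]
for some $\beta \in (0,1]$ depending only on $N$ and $p$. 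Combining the two estimates via the triangle inequality in $L^p$ and absorbing $\int|\nabla(u_j-w)|^p$ into the error term produces the Campanato-type inequality
\[
\int_{B_\rho(x_0)}|\nabla u_j|^p\,dx \;\leq\; C\bigl(\rho/r\bigr)^{N + p\beta - p}\int_{B_r(x_0)}|\nabla u_j|^p\,dx \;+\; C\, r^N.
\]

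To feed this into the iteration Lemma \ref{techlemmagrowth}, one writes $r^N \leq R^\varepsilon\, r^{N-\varepsilon}$ for a small $\varepsilon\in (0,p\beta)$, which secures the strict gap $N+p\beta-p > N-\varepsilon$ required by the lemma. The lemma then yields $\int_{B_\rho(x_0)}|\nabla u_j|^p \leq C \rho^{N - p + p\alpha}$ for some $\alpha\in(0,1/2)$, independent of $j$ and of the base point $x_0\in\overline{B_R}$. By the Morrey--Campanato characterization this is exactly the desired uniform bound $u_j\in C^{0,\alpha}(\overline{B_R})$.

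The main obstacle is the subquadratic regime $1<p<2$, where Remark \ref{varcapharm} produces only the degenerate-weighted inequality \eqref{e:pharmonic_p less than 2}, which does not directly control $\|\nabla(u_j-w)\|_{L^p(B_r)}$. To recover an $L^p$-bound of the same type one splits $B_r(x_0)$ according to whether $|\nabla(u_j-w)|$ is larger or smaller than $|\nabla w|$ and treats the two pieces separately, using H\"older's inequality together with the uniform $L^p$-bound on $\nabla w$ on the ``good'' part and the weighted estimate on the ``bad'' one. Once such a bound is in place, the Campanato iteration proceeds verbatim as in the case $p\geq 2$.
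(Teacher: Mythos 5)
Your strategy is the same as the paper's: compare $u_j$ to its $p$-harmonic replacement $w$ in a ball, transfer the decay of $\int|\nabla w|^p$ (Lemma \ref{l:pharmgrowth}) back to $u_j$ by controlling $\int|\nabla(u_j-w)|^p$, and run the iteration Lemma \ref{techlemmagrowth}. For $p\geq 2$ this is exactly what the paper does and your argument is fine.

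The gap is in the regime $1<p<2$. To run the iteration you need an estimate of the form
\[
\int_{B_{r'}}|\nabla(u_j-w)|^p \;\leq\; C_\varepsilon (r')^N \;+\; \varepsilon\int_{B_{r'}}|\nabla w|^p
\]
with $\varepsilon$ \emph{as small as you like}, so that after absorbing $\int|\nabla w|^p$ into $\phi(r')$ the coefficient $(\rho/R)^{N+p\beta-p}+\varepsilon$ stays below $1$. Your proposed split at the threshold $|\nabla(u_j-w)|\lessgtr|\nabla w|$ does not produce this: on the set $\{|\nabla(u_j-w)|\leq|\nabla w|\}$ the naive bound gives $\int|\nabla(u_j-w)|^p\leq\int|\nabla w|^p$ with coefficient $1$, which cannot be absorbed; and if instead you apply H\"older there (which you mention, but do not spell out), you get a multiplicative term $\bigl((r')^N\bigr)^{p/2}\bigl(\int|\nabla w|^p\bigr)^{(2-p)/2}$, and converting that back into the required additive form \emph{still} requires a weighted Young inequality with a small parameter. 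That Young step, with $q=2/p$, is precisely what the paper uses and it is the ingredient your sketch is missing. (One can also fix your split by using a threshold $|\nabla(u_j-w)|\lessgtr t|\nabla w|$ with $t$ small, but again one has to introduce the small parameter explicitly.)

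A minor separate point: your remark that writing $r^N\leq R^\varepsilon r^{N-\varepsilon}$ with ``small $\varepsilon$'' secures the gap $N+p\beta-p>N-\varepsilon$ is off. Since $\beta\in(0,1]$ is only some H\"older-type exponent for $p$-harmonic functions (Lemma \ref{l:pharmgrowth} does not give $\beta$ close to $1$ for free), that inequality requires $\varepsilon>p(1-\beta)$, which is not small when $\beta$ is small. The correct move is simply to pick the target exponent $\beta_0\in(N-p,\,N+p\beta-p)$ (also below $N-p+p/2$ to land in $\alpha<1/2$) and use $(r')^N\leq C(r')^{\beta_0}$ for bounded radii; this is harmless but your phrasing suggests a constraint that is not actually available.
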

	\begin{proof}

	The proof is similar to the proof of \cite[Lemma 5.8]{DPMM}.
		As usual, we drop the subscript $j$.
	By Morrey Theorem (see, for example, \cite[Theorem 5.7]{GM}) it is enough to show that 
\[
\phi(r):=\int_{B_r(x_0)}{\vert\nabla u\vert^p}\le Cr^{N+2\alpha-p}
\]		
for all \(r\) small enough (say less that \(1/2\)).
	
	\noindent
	Let $x_0\in B_R$.	Let  $w$ be the p-harmonic extension of \(u\) 	in $B_{r'}(x_0)$. 
	By Lemma \ref{l:pharmgrowth} we know that 
		\begin{equation*}
			\int_{B_r(x_0)}{\left\vert\nabla w\right\vert^p}\leq C\left(\frac{r}{r'}\right)^{N+p\beta-p}\int_{B_{r'}(x_0)}{\left\vert\nabla w\right\vert^p}.
		\end{equation*} 
		Let $g:=u-w$. Then
		\begin{equation*}
			\begin{aligned}
				&\int_{B_r(x_0)}{\left\vert \nabla u\right\vert^p}dx
				\leq C\int_{B_r(x_0)}{\left\vert\nabla w\right\vert^p}dx+C\int_{B_r(x_0)}{\left\vert \nabla g\right\vert^p}\\
				&\qquad\leq C\left(\frac{r}{r'}\right)^{N+p\beta-p}\int_{B_{r'}(x_0)}{\left\vert\nabla w\right\vert^p}dx+C\int_{B_{r'}(x_0)}{\left\vert \nabla g\right\vert^p}.
			\end{aligned}	
		\end{equation*}
		We want to show the following bound:
		\begin{equation}\label{e:bound on dirichlet energy of g}
			\int_{B_{r'}(x_0)}{\left\vert \nabla g\right\vert^p}
			\leq C_\varepsilon(r')^N+C\varepsilon\int_{B_{r'}(x_0)}{\left\vert \nabla w\right\vert^p}
		\end{equation}		 
		for $\varepsilon<\varepsilon_0=\varepsilon_0(N,p)$.
		By \eqref{e:pharmonic_p bigger than 2} it is immediate for $p\geq 2$ (even without the second summand on the right hand side).
		For $1<p<2$ we use Young inequality { (in the form $ab\leq C_\varepsilon a^q+\varepsilon b^{q'}$ with $q=2/p$)
		to get		
		\begin{equation*}
		\begin{split}
			\int_{B_{r'}(x_0)}{\left\vert \nabla g\right\vert^p}
			=\int_{B_{r'}(x_0)}{\left\vert \nabla g\right\vert^p\left(\left\vert \nabla w\right\vert^2+\left\vert \nabla g\right\vert^2\right)^{p(p-2)/4}\left(\left\vert \nabla w\right\vert^2+\left\vert \nabla g\right\vert^2\right)^{-p(p-2)/4}}\\
			\leq
			C_\varepsilon\int_{B_{r'}(x_0)}{\left\vert \nabla g\right\vert^2\left(\left\vert \nabla w\right\vert^2+\left\vert \nabla g\right\vert^2\right)^{(p-2)/2}}+\varepsilon\int_{B_{r'}(x_0)}{\left(\left\vert \nabla w\right\vert^2+\left\vert \nabla g\right\vert^2\right)^{p/2}}\\
			\leq C_\varepsilon(r')^N+C\varepsilon\int_{B_{r'}(x_0)}{\left(\left\vert \nabla w\right\vert^p+\left\vert \nabla g\right\vert^p\right)},
		\end{split}	
		\end{equation*}}
		yielding \eqref{e:bound on dirichlet energy of g} for $\varepsilon$ small enough. Note that in the last inequality we used \eqref{e:pharmonic_p less than 2}.

\noindent
So we have
		\begin{equation*}
			\begin{aligned}
				&\int_{B_r(x_0)}{\left\vert \nabla u\right\vert^p}dx
				\leq C\left(\left(\frac{r}{r'}\right)^{N+p\beta-p}+\varepsilon\right)\int_{B_{r'}(x_0)}{\left\vert\nabla w\right\vert^p}dx+C_\varepsilon(r')^N\\
				&\qquad\leq C\left(\left(\frac{r}{r'}\right)^{N+p\beta-p}+\varepsilon\right)\int_{B_{r'}(x_0)}{\left\vert\nabla u\right\vert^p}dx+C_\varepsilon(r')^N\\
				&\qquad\qquad+C\left(\left(\frac{r}{r'}\right)^{N+p\beta-p}+\varepsilon\right)\int_{B_{r'}(x_0)}{\left\vert\nabla g\right\vert^p}dx\\
				&\qquad\leq C\left(\left(\frac{r}{r'}\right)^{N+p\beta-p}+\varepsilon\right)\int_{B_{r'}(x_0)}{\left\vert\nabla u\right\vert^p}dx+C_\varepsilon(r')^N,
			\end{aligned}	
		\end{equation*}
 		which gives us
		\begin{equation*}
			\phi(r)\leq c\left(\left(\frac{r}{r'}\right)^{N+p\beta-p}+\varepsilon\right)\phi(r')+C_\varepsilon(r')^{N}.
		\end{equation*}
		Using Lemma \ref{techlemmagrowth} we obtain
		\begin{equation*}
			\phi(r)\leq c\left(\left(\frac{r}{r'}\right)^{N+p\beta-p}\phi(r')+Cr^{N}\right)
		\end{equation*}
		for any $r<r'<1$. In particular,
		\begin{equation*}
			\phi(r)\leq c\left(\Vert u\Vert^p_{L^p(\mathbb{R}^N)}+C\right)r^{N+p\beta-p}.
		\end{equation*}	
	\end{proof}

	\subsubsection{Lipschitz continuity and density estimates on the boundary}

	The following lemma is an analogue of \cite[Lemma 3.2]{DP}
	and it will give us uniform Lipschitz continuity.
		
	\begin{lemma} \label{lip}
		There exists $M=M(N,R)$  such that if \(u_j\) is the capacitary potential of a minimizer for \eqref{pcappertpb} and \(v_j=1-u_j\) satisfies
		$v_j(x_0)=0$,
		then 
		\begin{equation*}
			\sup_{B_{r/4}(x_0)}{v_j}\leq Mr.
		\end{equation*}
	\end{lemma}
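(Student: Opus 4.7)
The plan is to follow the Alt--Caffarelli--De Philippis scheme of comparing \(u_j\) with its \(p\)-harmonic replacement inside a ball, and then use the minimality of \(\Omega_j\) against that competitor; this is the \(p\)-Laplacian analogue of \cite[Lemma 3.2]{DP}. Throughout we write \(u=u_j\), \(v=v_j\). Let \(\tilde u\) be the \(p\)-harmonic replacement of \(u\) in \(B_{r/2}(x_0)\) (i.e.\ \(\tilde u\) is \(p\)-harmonic in \(B_{r/2}(x_0)\) and equals \(u\) on \(\partial B_{r/2}(x_0)\)), extended by \(u\) outside, and set \(\tilde v:=1-\tilde u\).

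First I would check that \(u\) is \(p\)-superharmonic in \(B_{r/2}(x_0)\): indeed \(u\equiv 1\) on \(\Omega_j\), is \(p\)-harmonic on \(\Omega_j^c\) and \(u\le 1\) everywhere, so the distributional \(p\)-Laplacian of \(u\) is a non-positive measure. Comparison then gives \(\tilde u\le u\), equivalently \(0\le v\le\tilde v\); \(\tilde v\) is \(p\)-harmonic and, by the strong maximum principle, strictly positive inside \(B_{r/2}(x_0)\). Hence the competitor \(\tilde\Omega:=\{\tilde u=1\}=\Omega_j\setminus B_{r/2}(x_0)\) is an admissible (strictly smaller) modification of \(\Omega_j\) and \(\tilde u\) is admissible in the variational definition of \(\Capa_p(\tilde\Omega)\).

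Second, by minimality of \(\Omega_j\) for \(\mathscr{C}_{\hat\eta,j}\), the one-sided energy inequality of Lemma \ref{l:varpcap} applied with \(A=B_{r/2}(x_0)\) and \(w=\tilde u\), combined with \eqref{e:feta} and the \(L^1\)-Lipschitz property of the asymmetry (Lemma \ref{propasym}\ref{asymlip}), together with \(|\Omega_j\Delta\tilde\Omega|\le|B_{r/2}(x_0)|\lesssim r^N\), yields
\[
\int_{B_{r/2}(x_0)}\bigl(|\nabla u|^p-|\nabla\tilde u|^p\bigr)\,dx\le C(N,R,\hat\eta)\,r^N.
\]
Combined with the quantitative convexity bounds in Remark \ref{varcapharm} --- \eqref{e:pharmonic_p bigger than 2} for \(p\ge 2\) and \eqref{e:pharmonic_p less than 2} for \(1<p<2\) --- this converts into a bound of order \(r^N\) on \(\int_{B_{r/2}(x_0)}|\nabla(u-\tilde u)|^p\,dx\) (unweighted for \(p\ge 2\), weighted for \(1<p<2\)).

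Third, I would transfer this energy information into an \(L^p\) lower bound for \(u-\tilde u\). Let \(y_0\in\overline{B_{r/4}(x_0)}\) realize \(m:=\sup_{B_{r/4}(x_0)}v\); then \(\tilde v(y_0)\ge v(y_0)=m\), and since \(\tilde v\ge 0\) is \(p\)-harmonic in \(B_{r/2}(x_0)\), Harnack's inequality on \(B_{r/4}(x_0)\) yields \(\tilde v\ge cm\) on \(B_{r/4}(x_0)\) for some \(c=c(N,p)>0\). Using the free-boundary condition \(u\equiv 1\) on \(\Omega_j\) together with a companion lower density estimate \(|\Omega_j\cap B_{r/4}(x_0)|\ge c\,r^N\) at \(x_0\in\partial\Omega_j\) (derived by the same comparison device, exactly as in \cite{DP}), one obtains \(u-\tilde u=\tilde v-v\ge cm\) on a set of measure \(\gtrsim r^N\), hence
\[
\int_{B_{r/2}(x_0)}|u-\tilde u|^p\,dx\ge c\,m^p\,r^N.
\]
Finally, Poincaré's inequality in \(B_{r/2}(x_0)\) applied to the function \(u-\tilde u\) (which vanishes on \(\partial B_{r/2}(x_0)\)), together with the previous two estimates, gives \(c\,m^p\,r^{N-p}\le C\,r^N\), so \(m\le M\,r\) with \(M:=(C/c)^{1/p}\).

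The main obstacle is the third step: Harnack on \(\tilde v\) alone does not produce an \(L^p\) lower bound on \(u-\tilde u\) of the correct order \(m^p r^N\), since \(v\) itself could be close to \(m\) on a large portion of \(B_{r/4}(x_0)\). Closing this gap requires a density estimate from below for \(\Omega_j\) at \(x_0\), which in \cite{DP} is obtained in tandem with Lipschitz continuity by the same variational technique and whose proof cannot be cleanly decoupled from that of Lemma \ref{lip}. A secondary obstacle is the singular regime \(1<p<2\): \eqref{e:pharmonic_p less than 2} only provides a weighted bound, and to pass to an unweighted Poincaré-style estimate one applies Hölder's inequality against \(\bigl(|\nabla\tilde u|^2+|\nabla(u-\tilde u)|^2\bigr)^{p/2}\), exactly in the spirit of the argument carried out in Lemma \ref{l:pcapholder_continuity}.
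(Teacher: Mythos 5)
Your plan is a direct Alt--Caffarelli comparison argument (compare with the $p$-harmonic replacement, control the energy gap by minimality, pit that against an $L^p$ lower bound coming from Harnack, and close with Poincar\'e), and it is genuinely different from the paper's proof, which is a two-level blow-up contradiction: the paper rescales so that the putative Lipschitz constant is $\sim k$, locates a free boundary point $z_k$ where the weighted quantity $(1-|x|)\tilde v_k(x)$ is nearly maximal, blows up again around $z_k$, and passes to a $p$-harmonic limit $v_\infty$ with $v_\infty(0)=0$ and $\sup_{B_{1/2}}v_\infty\ge c_0/2$, contradicting Harnack. The paper's route is carefully designed to avoid exactly the input that your plan needs and cannot supply.

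The gap you flag yourself is fatal as stated, and it is worth being precise about why. To convert the Harnack lower bound $\tilde v\ge cm$ on $B_{r/4}(x_0)$ into $\int_{B_{r/2}(x_0)}|u-\tilde u|^p\gtrsim m^pr^N$ you need $v$ itself to be much smaller than $m$ on a set of measure $\gtrsim r^N$; the only set where you control $v$ exactly is $\Omega_j$ (where $v=0$), so you need the lower density bound $|\Omega_j\cap B_{r/4}(x_0)|\ge cr^N$. In the paper's logical order this density bound is stated only \emph{after} Lemma \ref{lip} (``these two lemmas imply Lipschitz continuity of minimizers and density estimates''), and its proof uses both the linear growth of Lemma \ref{lip} and the non-degeneracy lemma; the one-sided comparison inequality of Lemma \ref{l:varpcap} does not apply to $w=\tilde u$ either, since there $\tilde u\le u$ rather than $u\le w$. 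Note also that H\"older continuity (Lemma \ref{l:pcapholder_continuity}), which \emph{is} available at this stage, only gives $v\le C\rho^\alpha$ on $B_\rho(x_0)$, so the radius $\rho$ where $v\le cm/2$ scales like $m^{1/\alpha}$, not like $r$; this is too small to match the $r^N$ in the energy upper bound. So the argument is circular in its current form. The paper's blow-up contradiction sidesteps this by never needing a quantitative lower density: it only needs that the zero set is nonempty near the rescaling center (which is automatic, since $z_k\in\partial\{\tilde v_k=0\}$), and the contradiction is produced by the strong Harnack inequality applied to the $p$-harmonic limit, not by an $L^p$ lower bound.

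Two secondary remarks. First, your Step~2 bound is correct: Lemma \ref{l:varpcap} with $A=B_{r/2}(x_0)$, $w=\tilde u$, together with the $L^1$-Lipschitz property of $\alpha$ and \eqref{e:feta}, does give $\int_{B_{r/2}}(|\nabla u|^p-|\nabla\tilde u|^p)\le Cr^N$, and Remark \ref{varcapharm} converts it into a gradient bound (a minor typo: the final display should read $cm^pr^N\le Cr^{N+p}$, not $cm^pr^{N-p}\le Cr^N$, but the conclusion $m\lesssim r$ is the same). Second, your handling of $1<p<2$ via H\"older against $(|\nabla\tilde u|^2+|\nabla(u-\tilde u)|^2)^{p/2}$ is the right move and mirrors Lemma \ref{l:pcapholder_continuity}, so that part is not the problem. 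The missing piece is solely the density estimate, and until that is supplied without appeal to the conclusion of Lemma \ref{lip}, the proposal is incomplete.
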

	
	\begin{proof}
		{\bf Step 1.} We argue by contradiction and get a sequence
		$v_{j_k}$, $B_{r_k}(y_k)\subset B_R$ such that $v_{j_k}(y_k)=0$, $\sup_{B_{r_k/4}(y_k)}{v_{j_k}}\geq k r_k$.
		We now consider blow-ups around $y_k$, that is, we define
		$$\tilde{v}_k(x):=\frac{v_{j_k}(y_k+r_k x)}{r_k}.$$
		Note that $\tilde{v}_k$ minimizes
		\begin{equation*}
			\int_{\mathds{R}^N}{\vert\nabla v\vert^p}dx+
			r_k^{-N}f_{\hat{\eta}}(r_k^N\{v=0\})+
			r_k^{-N}\sqrt{\varepsilon_{j_k}^2+\sigma^2(\alpha(\Phi_k(\{v=0\}))-\varepsilon_{j_k})^2}
		\end{equation*}
		among functions such that $\Phi_k(\{v=0\})\subset B_R$, where $\Phi_k(x)=y_k+r_k x$. Additionally, we have
		\begin{equation*}
			\tilde{v}_k(0)=0,\,\sup_{B_{1/4}}\tilde{v}_k\geq k.
		\end{equation*}
		We define a function
		\begin{equation*}
			d_k(x):=\mathrm{dist}\left(x, \{\tilde{v}_k=0\}\right)
		\end{equation*}
		and a set
		\begin{equation*}
			V_k:=\left\{x\in B:d_k(x)\leq\frac{1-\vert x\vert}{3}\right\}.
		\end{equation*}
		The following properties hold for $V_k$:
		\begin{itemize}
			\item $B_{1/4}\subset V_k$. This is due to the fact that
			$\tilde{v}_k(0)=0$ and thus $d_k(x)\leq\vert x\vert$.
			\item $m_k:=\sup_{x\in V_k}{(1-\vert x\vert)\tilde{v}_k(x)}\geq\frac{3k}{4}$. This follows from the previous property
			and the fact that $\sup_{B_{1/4}}\tilde{v}_k\geq k$.
		\end{itemize}
		Since $\tilde{v}_k$ is continuous and $(1-\vert x\vert)\tilde{v}_k(x)=0$ on $\partial B$, $m_k$ is obtained at some point 
		$x_k\in V_k$. We notice that the following holds for $x_k$:
		\begin{equation*}
			\tilde{v}_k(x_k)=\frac{m_k}{1-\vert x_k\vert}\geq m_k\geq\frac{3k}{4};\quad\delta_k:=d_k(x_k)\leq\frac{1-\vert x_k\vert}{3}.
		\end{equation*}
		We now take projections of $x_k$ onto $\{\tilde{v}_k=0\}$,
		that is, we consider a sequence $z_k$ such that 
		$z_k\in\{\tilde{v}_k=0\}\cap B$, $\vert z_k-x_k=\delta_k\vert$. Note that $B_{2\delta_k}(z_k)\subset B$. Moreover, $B_{\delta_k/2}(z_k)\subset V_k$ since for any $x\in B_{\delta_k/2}(z_k)$ we have
		\begin{equation*}
			1-\vert x\vert\geq 1-\vert x_k\vert-\vert x_k-x\vert
			\geq 1-\vert x_k\vert-\frac{3}{2}\delta_k\geq\frac{1-\vert x_k\vert}{2}.
		\end{equation*}
		Now let us show that $\sup_{B_{\delta_k/4}}{\tilde{v}_k}\sim\tilde{v_k}(x_k)$. Indeed, for the upper bound it is enough to notice that
		\begin{equation*}
			\sup_{B_{\delta_k/2}(z_k)}\tilde{v}_k\leq
			\tilde{v}_k(x_k)(1-\vert x_k\vert)\sup_{B_{\delta_k/2}(z_k)}{\frac{1}{1-\vert x\vert}}\leq 2\tilde{v}_k(x_k).
		\end{equation*}
		On the other hand, since $B_{\delta_k}(x_k)\subset\{\tilde{v}_k>0\}$, $\tilde{v}_k$ is $p$-harmonic in $B_{\delta_k}(x_k)$ and thus,
		by Harnack inequality (see, for example, \cite[Theorem 2.20]{plaplacenotes}), we get
		\begin{equation*}
			\sup_{B_{\delta_k/4}(z_k)}\tilde{v}_k\geq
			\inf_{B_{4\delta_k/5}(x_k)}\tilde{v}_k
			\geq c_0\sup_{B_{4\delta_k/5}(x_k)}\tilde{v}_k-C
			\geq \frac{c_0}{2}\tilde{v}_k(x_k),
		\end{equation*}
		where the last inequality holds for $k$ big enough.
		
		{\bf Step 2.} We now consider blow-ups around $z_k$. We define
		\begin{equation*}
			\hat{v}_k(x):=\frac{\tilde{v}_k(z_k+\frac{\delta_k}{2}x)}{\tilde{v}_k(x_k)}.
		\end{equation*}
		We note that
		\begin{equation*}
			\sup_B\hat{v}_k\leq 2,\quad \sup_{B_{1/2}}\hat{v}_k\geq c_0/2,\quad \hat{v}_k(0)=0
		\end{equation*}
		and $\hat{v}_k$ is a minimizer of
		\begin{equation}\label{e:min problem for the second blow-up}
		\begin{split}
			\int_{\mathds{R}^n}{\vert\nabla v\vert^p}dx+
			\left(\delta_k/2\right)^{p-n}\tilde{v}_k(x_k)^{-p}r_k^{-n}f_{\hat{\eta}}((\delta_k/2)^n r_k^n\{v=0\})+\\
			\left(\delta_k/2\right)^{p-n}\tilde{v}_k(x_k)^{-p}r_k^{-n}\sqrt{\varepsilon_{j_k}^2+\sigma^2(\alpha(\Psi_k(\{v=0\}))-\varepsilon_{j_k})^2}
		\end{split}	
		\end{equation}
		among functions such that $\Psi_k(\{v=0\})\subset B_R$,
		where $\Psi_k(x)=y_k+r_k z_k +\frac{r_k\delta_k x}{2}$.
		
		We introduce $w_k$ - a $p$-harmonic continuation of $\hat{v}_k$ in $B_{3/4}$:
		\begin{equation*}
		\begin{cases}
		\div(\nabla\vert w_k\vert^{p-2}\nabla w_k)=0\text{ in }B_{3/4},\\
		w_k=\hat{v}_k\text{ in }B_{3/4}^c.
		\end{cases}
		\end{equation*}
		By maximum principle (see, for example, \cite[Corollary 2.21]{plaplacenotes}) $w_k>0$ in $B_{3/4}$ and thus
		\begin{equation*}
		\{\hat{v}_k=0\}\Delta\{w_k=0\}=\{\hat{v}_k=0\}\cap B_{3/4}.
		\end{equation*}
		So now, remembering that $\hat{v}_k$ is a minimizer
		for \eqref{e:min problem for the second blow-up} and
		using $w_k$ as a comparison function we obtain
		\begin{equation*}
			\int_{B_{3/4}}{\vert\nabla\hat{v}_k\vert^p}\,dx
			\leq \int_{B_{3/4}}{\vert\nabla w_k\vert^p}\,dx+\frac{C}{k^p}.
		\end{equation*}
		From this we can infer the convergence of $v_k-w_k$ to zero.
		In order to do that, we define $$v_k^s=s\hat{v}_k+(1-s)w_k.$$
		Now, we write
		\begin{equation*}
		\begin{split}
			\frac{C}{k^p}\geq\int_{B_{3/4}}{\vert\nabla\hat{v}_k\vert^p}\,dx-\int_{B_{3/4}}{\vert\nabla w_k\vert^p}\,dx
			=p\int_0^1{\frac{1}{s}\,ds}\int_{B_{3/4}}{\vert\nabla v_k^s\vert^{p-2}\nabla v_k^s\cdot\nabla(v_k^s-w_k)}\,dx\\
			=p\int_0^1{\frac{1}{s}\,ds}\int_{B_{3/4}}{\left(\vert\nabla v_k^s\vert^{p-2}\nabla v_k^s-\vert\nabla w_k\vert^{p-2}\nabla w_k\right)\cdot\nabla(v_k^s-w_k)}\,dx.
		\end{split}	
		\end{equation*}
		We want to show that the convergence of $v_k-w_k$ is strong.
		We use Lemma \ref{l:inequality for L^p norm of a difference} for that. 
		We need to consider two cases. For $p\geq 2$ by 
		the inequality \eqref{e:inequality for L^p norm for p>=2} we get $$\int_{B_{3/4}}{\vert\nabla\hat{v}_k-\nabla w_k\vert^p}\,dx\leq\frac{C}{k^p},$$ yielding the strong convergence of $\hat{v}_k-w_k$ to zero in $W^{1,p}(B_{3/4})$ as $k\rightarrow\infty$.					To deal with the case $1<p<2$, we observe that $\hat{v}_k$
		is bounded in $D^{1,p}(B_{3/4})$. We infer that $w_k$ is bounded in $D^{1,p}$ too and hence, by the inequality \eqref{e:inequality for L^p norm for p<2} we also have the strong convergence of $\hat{v}_k-w_k$ to zero in $W^{1,p}(B_{3/4})$ as $k\rightarrow\infty$.
		
		We recall now that $\hat{v}_k$ is equibounded in $C^{0,\alpha}(B_{3/4})$ and hence, up to a non-relabelled subsequence we have
		that $\hat{v}_k$ converges to some continuous function $v_\infty$ locally uniformly and weakly in $W^{1,p}$. This means that
		also $w_k$ converges to $v_\infty$ weakly in $W^{1,p}$.
		Elliptic regularity for $w_k$ tell us that $w_k$ is locally bounded in $C^{1,\beta}(B_{3/4})$ and so up to a subsequence
		$w_k$ converges to $v_\infty$ strongly in $W^{1,p}$.
		But then $v_\infty$ is $p$-harmonic with $v_\infty(0)=0$,
		$\sup_{B_{1/2}}v_\infty\geq c_0/2$. This contradicts Harnack inequality.	 
	\end{proof}

	The following lemma is an analogue of \cite[Lemma 4.2]{DP}
	and the proof is almost identical.	
	
	\begin{lemma}[non-degeneracy]
		For $\kappa<1$, $\gamma>p-1$ there exists a constant $c_{nd}~=~c_{nd}(N,\kappa,\gamma,R)$ such that if \(u_j\) is a minimizer for \eqref{pcappertpb} and  \(v_j=1-u_j\) satisfies 
		\begin{equation}\label{e:small mean p-cap}
			\left(\fint_{\partial B_r(x_0)}{v_j^\gamma}\right)^\frac{1}{\gamma}\leq cr,
		\end{equation}
		then $v_j=0$ in $B_{\kappa r}(x_0)$.
	\end{lemma}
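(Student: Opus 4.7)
The plan is a comparison argument in the spirit of the Alt--Caffarelli non-degeneracy lemma: modify $u_j$ on a neighborhood of $x_0$ to create a competitor $\tilde u_j$ that is forced to equal $1$ on $B_{\kappa r}(x_0)$, and then use the second inequality of Lemma \ref{l:varpcap} to trade the gain in volume $|B_{\kappa r}(x_0)\setminus\Omega_j|$ against the change in $p$-Dirichlet energy. The hypothesis on the $L^\gamma$-average of $v_j$ on $\partial B_r(x_0)$ will be used to make this energy change small.

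Fix $\kappa''\in(\kappa,1)$ and pick $\eta\in C_c^\infty(B_{\kappa''r}(x_0))$ with $\eta\equiv 1$ on $B_{\kappa r}(x_0)$ and $|\nabla\eta|\le C(\kappa,\kappa'')/r$. Set $\tilde v_j:=v_j(1-\eta)$ and $\tilde u_j:=1-\tilde v_j$, so that $u_j\le \tilde u_j\le 1$, $\tilde u_j=u_j$ outside $B_{\kappa''r}(x_0)$, and $\{\tilde u_j=1\}\supset \Omega_j\cup B_{\kappa r}(x_0)$. The second inequality of Lemma \ref{l:varpcap} with $A=B_{\kappa''r}(x_0)$ then gives
\begin{equation*}
\tfrac{\hat\eta}{2}\,|B_{\kappa r}(x_0)\setminus\Omega_j|\le \int_{B_{\kappa''r}}|\nabla\tilde v_j|^p-\int_{B_{\kappa''r}}|\nabla v_j|^p.
\end{equation*}
Expanding $\nabla\tilde v_j=(1-\eta)\nabla v_j-v_j\nabla\eta$ and using $|a+b|^p\le (1+\varepsilon)|a|^p+C_\varepsilon|b|^p$ together with $(1-\eta)^p\le 1$, the right-hand side is at most
\begin{equation*}
\varepsilon\int_{B_{\kappa''r}}|\nabla v_j|^p\;+\;C_\varepsilon\int_{B_{\kappa''r}}v_j^p|\nabla \eta|^p.
\end{equation*}

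Each piece is controlled by a uniform bound already in hand. The Lipschitz bound of Lemma \ref{lip} yields $\int_{B_{\kappa''r}}|\nabla v_j|^p\le C M^p r^N$. For the second piece, $v_j\ge 0$ is $p$-subharmonic globally (it is $p$-harmonic where positive and vanishes elsewhere); letting $h$ denote its $p$-harmonic extension into $B_r(x_0)$, the comparison principle gives $v_j\le h$ in $B_r(x_0)$, while a standard weak Harnack / sub-$L^\gamma$-mean-value estimate for nonnegative $p$-harmonic functions---available precisely when $\gamma>p-1$---yields
\begin{equation*}
\sup_{B_{\kappa''r}(x_0)}v_j\le \sup_{B_{\kappa''r}(x_0)}h\le C(\kappa'',N,p,\gamma)\left(\fint_{\partial B_r(x_0)}v_j^\gamma\right)^{\!1/\gamma}\le C c r.
\end{equation*}
Since $|\nabla\eta|^p\le C r^{-p}$ on a set of measure $\le C r^N$, this gives $\int v_j^p|\nabla\eta|^p\le C c^p r^N$, and therefore $|B_{\kappa r}(x_0)\setminus\Omega_j|\le C(\varepsilon M^p+C_\varepsilon c^p)r^N$. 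Choosing $\varepsilon$ small and then $c=c_{nd}(N,\kappa,\gamma,R)$ small enough, this measure can be made smaller than any prescribed multiple of $r^N$.

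The final step is to upgrade the smallness of $|B_{\kappa r}(x_0)\setminus\Omega_j|$ to the pointwise conclusion $v_j\equiv 0$ on $B_{\kappa r}(x_0)$: since $v_j$ is continuous it suffices to rule out $|B_{\kappa r}\setminus\Omega_j|>0$. This follows by a standard density argument: if $v_j(y_0)>0$ for some $y_0\in B_{\kappa r}(x_0)$, a rescaled application of the integral estimate above around a free boundary point of $\Omega_j$ meeting $B_{\kappa r}(x_0)$, together with the Lipschitz bound of Lemma \ref{lip}, would force $|B_{\kappa r}(x_0)\setminus\Omega_j|$ to be bounded below by a universal fraction of $r^N$, contradicting the smallness just derived for $c$ small enough; this matches the argument of \cite[Lemma 4.2]{DP}. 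The main obstacle is the weak-Harnack input in the third paragraph, which is precisely where the restriction $\gamma>p-1$ is essential, and the density/topological step required to close the argument.
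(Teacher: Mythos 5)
Your construction of the competitor $\tilde u_j=1-v_j(1-\eta)$ and the use of the second inequality in Lemma \ref{l:varpcap} is a reasonable start, and the weak Harnack input for $p$-subharmonic functions (this is indeed where $\gamma>p-1$ enters) is used in the same way as in the paper. However, there is a genuine gap in the concluding step, and the structure of the estimate you obtain is not strong enough to yield the statement.

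Your argument produces an inequality of the form
\begin{equation*}
\tfrac{\hat\eta}{2}\,|B_{\kappa r}(x_0)\setminus\Omega_j|\;\le\;(\varepsilon M^p + C_\varepsilon\, c_{nd}^p)\,r^N,
\end{equation*}
i.e.\ only \emph{smallness} of $|B_{\kappa r}\setminus\Omega_j|$ relative to $r^N$, not vanishing. To pass to $v_j\equiv 0$ you invoke a ``density argument'', claiming that the presence of a single point with $v_j>0$ in $B_{\kappa r}(x_0)$ would force $|B_{\kappa r}(x_0)\setminus\Omega_j|\gtrsim r^N$. But the density estimate for $\{v_j>0\}$ near the free boundary is precisely one of the consequences of the non-degeneracy lemma — in both \cite{DP} and in this paper the chain is Lipschitz growth $+$ non-degeneracy $\Rightarrow$ density estimates — so using it here is circular. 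Nothing in what you have established independently rules out $\{v_j>0\}$ meeting $B_{\kappa r}(x_0)$ in a set of arbitrarily small positive measure.

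The paper avoids this by choosing a comparison function built as $\min(v, v')$, where $v'=\varepsilon\sqrt{\kappa}\,r\,\varphi$ is a rescaled $p$-capacitary profile on the annulus $B_{\sqrt\kappa r}\setminus B_{\kappa r}$, and then estimating the resulting boundary term $C\varepsilon^{p-1}\int_{\partial B_{\kappa r}}v$ via a trace inequality plus Young's inequality so that it is dominated by the \emph{same two quantities} appearing on the left, $\int_{B_{\kappa r}}|\nabla v|^p$ and $|B_{\kappa r}\cap\{v>0\}|$, times a prefactor $C\varepsilon^{p-1}(1+\varepsilon)$. Taking $c_{nd}$ small makes this prefactor $<\min\{1/2,\hat\eta/4\}$, which \emph{absorbs} the right-hand side into the left and forces both terms to vanish identically, giving $v\equiv 0$ on $B_{\kappa r}(x_0)$. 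Your cutoff decomposition does not reproduce this absorption structure: after the Lipschitz and Harnack bounds the right-hand side has become a fixed multiple of $r^N$ rather than a small multiple of the left-hand side, so there is no way to conclude the quantity is zero. You also omit the preliminary verification that $B_{\kappa r}(x_0)\subset B_R$ (needed for the competitor to be admissible in \eqref{pcappertpb}); this is the content of the first step of the paper's proof and uses the lower barrier $v\ge 1-u_{B_R}$.
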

	\begin{proof}
		We will omit the subscript $j$ for convenience and write 
		$v$ instead of $v_j$. None of the bounds will depend on $j$.	
	
		First, we want to show that if \eqref{e:small mean p-cap} holds for $\partial B_r(x_0)$ and $c$ is small enough (depending
		only on $N$, $\kappa$, $\gamma$, and $R$), 
		then the inequality \eqref{e:small mean p-cap} yields
		$B_{\kappa r}\subset B_R$. The idea is that $v$ is
		sufficiently big outside of $B_R$. Indeed,
		by maximum principle
		\begin{equation}\label{e:bound on v from below p-cap}
			v(x)\geq 1-u_{B_R}(x)=1-\frac{R^\frac{n-p}{p-1}}{\vert x\vert^\frac{n-p}{p-1}}.
		\end{equation}	
		If $B_{\kappa r}(x_0)\setminus B_R\neq\emptyset$,
		then $\vert B_r(x_0)\setminus B_{R+\frac{1-\kappa}{2}r}\vert
		\geq c(\kappa)\vert B_r\vert$ and, using \eqref{e:bound on v from below p-cap}, we get
		\begin{equation*}
			\left(\fint_{\partial B_r(x_0)}{v_j^\gamma}\right)^\frac{1}{\gamma}\geq c(\kappa)\left(1-\frac{R^\frac{n-p}{p-1}}{\vert R+\frac{1-\kappa}{2}r\vert^\frac{n-p}{p-1}}\right)
			\geq c(\kappa, N, R)r,
		\end{equation*}
		contradicting \eqref{e:small mean p-cap} for $c_{nd}$ small enough.
		
		Now we define 
		\begin{equation*}
			\varepsilon:=\frac{1}{\sqrt{\kappa}r}\sup_{B_{\sqrt{\kappa}r}}v\leq C\frac{1}{r}\left(\fint_{\partial B_r(x_0)}{v^\gamma}\right)^\frac{1}{\gamma}\leq C\, c_{nd},
		\end{equation*}
		where we used Harnack inequality for
		$p$-subharmonic functions (see \cite[Theorem 1.3]{T}).
		We set $\varphi(x)=\varphi(\vert x\vert)$ to be the solution
		of 
		\begin{equation*}
			\begin{cases}
				\Delta_p \varphi=0&\text{ in }B_{\sqrt{\kappa}r}\setminus B_{\kappa r},\\
				\varphi=0&\text{ on }\partial B_{\kappa r},\\
				\varphi=1&\text{ on }\partial B_{\sqrt{\kappa} r},
			\end{cases}
		\end{equation*}	
		defined as $0$ in $B_{\kappa r}$. Now we define 
		$$v':=\varepsilon\sqrt{\kappa}r\varphi.$$
		Note that $v'\geq v$ on $\partial B_{\sqrt{\kappa}r}$.
		Finally, we define
		$$w:=\min(v,v')\text{ in }B_{\sqrt{\kappa}r},\quad w:=v\text{ in }B_{\sqrt{\kappa}r}^c,$$
		and we use $w$ as a comparison function in \eqref{pcappertpb}.			We notice that $\{w=0\}\supset\{v=0\}$ and so from minimality
		of $v$ we conclude
		\begin{equation*}
			\int_{B_{\sqrt{\kappa} r}}{\vert\nabla v\vert^p}\,dx
			+\frac{\hat{\eta}}{2}\left\vert\{w=0\}\setminus \{v=0\}\right\vert\leq
			\int_{B_{\sqrt{\kappa} r}}{\vert\nabla w\vert^p}\,dx
			=\int_{B_{\sqrt{\kappa} r}\setminus B_{\kappa r}}{\vert\nabla w\vert^p}\,dx.
		\end{equation*}						
		Now we use the definition of $w$, positivity of $v'$ in $B_{\sqrt{\kappa}r}\setminus B_{\kappa r}$, and 
		convexity of $t\rightarrow t^p$ to get
		\begin{equation*}
		\begin{split}
			\int_{B_{\kappa r}}{\vert\nabla v\vert^p}\,dx
			+\frac{\hat{\eta}}{2}\left\vert B_{\kappa r}\cap\{v>0\}\right\vert\leq
			\int_{B_{\sqrt{\kappa} r}\setminus B_{\kappa r}}{\left(\vert\nabla w\vert^p-\vert\nabla v\vert^p\right)}\,dx\\
			\leq p\int_{B_{\sqrt{\kappa} r}\setminus B_{\kappa r}}{\vert\nabla w\vert^{p-2}\nabla w\cdot\nabla (w-v)}\,dx
			=p\int_{\partial B_{\kappa r}}{\vert\nabla v'\vert^{p-2}v\nabla v'\cdot\nu}.
		\end{split}
		\end{equation*}
		From the definition of $v'$ we have
		$$\vert\nabla v'\vert\leq C\frac{\varepsilon\sqrt{\kappa}r}{\kappa r-\sqrt{\kappa}r}\leq C\varepsilon.$$
		So we have 						
		\begin{equation*}
			\int_{B_{\kappa r}}{\vert\nabla v\vert^p}\,dx
			+\frac{\hat{\eta}}{2}\left\vert B_{\kappa r}\cap\{v>0\}\right\vert\leq
			C\varepsilon^{p-1}\int_{\partial B_{\kappa r}}{v}.
		\end{equation*}
		On the other hand, by trace inequality and Young inequality, and remembering the definition of $\varepsilon$, we can get
		\begin{equation*}
		\begin{split}
			\int_{\partial B_{\kappa r}}{v}
			&\leq C\left(\frac{1}{r}\int_{B_{\kappa r}}{v}\,dx+\int_{B_{\kappa r}}{\vert\nabla v\vert}\,dx\right)\\
			&\leq C\left(\sqrt{\kappa}\varepsilon\vert B_{\kappa r}\cap\{v>0\}\vert+\frac{1}{p}\int_{B_{\kappa r}}{\vert\nabla v\vert^p}\,dx+\frac{p-1}{p}\vert B_{\kappa r}\cap\{v>0\}\vert\right)\\
			&\leq C(1+\varepsilon)\left(\int_{B_{\kappa r}}{\vert\nabla v\vert^p}\,dx+\vert B_{\kappa r}\cap\{v>0\}\vert\right).			
		\end{split}	
		\end{equation*}					
		Bringing it all together, we get
		\begin{equation*}
			\int_{B_{\kappa r}}{\vert\nabla v\vert^p}\,dx
			+\frac{\hat{\eta}}{2}\left\vert B_{\kappa r}\cap\{v>0\}\right\vert\leq
			C\varepsilon^{p-1}(1+\varepsilon)\left(\int_{B_{\kappa r}}{\vert\nabla v\vert^p}\,dx+\vert B_{\kappa r}\cap\{v>0\}\vert\right).
		\end{equation*}
		It remains to choose $c$ from the statement of the lemma
		small enough for $C\varepsilon^{p-1}(1+\varepsilon)$ to be smaller
		that $\min\{\frac{1}{2},\frac{\hat{\eta}}{4}\}$.

	\end{proof}

	As in Section 4 of \cite{DP} these two lemmas imply Lipschitz continuity of minimizers and density estimates on the boundary of minimizing domains.
	
	\begin{lemma}
		Let $v_j$ be as above, $\Omega_j=\{v_j=0\}$.
		Then $\Omega_j$ is open and there exist constants $C=C(N, R)$, $\rho_0=\rho_0(N,R)>0$ such that
		\begin{enumerate}[label=\textup{(\roman*)}]
			\item	for every $x\in B_R$
				\begin{equation*}
					\frac{1}{C}\dist(x,\Omega_j)\leq v_j\leq C \dist(x,\Omega_j);
				\end{equation*}
			\item	$v_j$ are equi-Lipschitz;
			\item	for every $x\in\partial\Omega_j$ and $r\leq\rho_0$
				\begin{equation*}
					\frac{1}{C}\leq\frac{\vert\Omega_j\cap B_r(x)\vert}{\vert B_r(x)\vert}\leq\left(1-\frac{1}{C}\right).
\end{equation*}					
		\end{enumerate} 
	\end{lemma}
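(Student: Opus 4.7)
The plan is to derive (i)--(iii) together with the openness of $\Omega_j$ from the combination of Lemma \ref{lip}, the non-degeneracy lemma, and the minimality of $\Omega_j$ for $\mathscr{C}_{\hat\eta,j}$, following the strategy developed for $p$-capacitary free boundary problems in Section~4 of \cite{DP}. The key structural observation is that $v_j = 1 - u_j$ is $p$-harmonic on the open set $\{v_j > 0\} = \Omega_j^c$.

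For the upper bound in (i), apply Lemma \ref{lip} at a point $y \in \Omega_j$ realizing $\rho := \dist(x, \Omega_j) = |x - y|$, with radius slightly larger than $4\rho$; by continuity of $v_j$ this gives $v_j(x) \leq 4M\rho$. The lower bound in (i) is obtained by contradiction: if $v_j(x) < c\rho$ for $c$ sufficiently small, then $B_\rho(x) \subset \{v_j > 0\}$ and Harnack's inequality for positive $p$-harmonic functions on $B_\rho(x)$ yields $\sup_{B_{\rho/2}(x)} v_j \leq C v_j(x) < Cc\rho$, whence $\big(\fint_{\partial B_{\rho/2}(x)} v_j^\gamma\big)^{1/\gamma} < c_{nd}\,\rho/2$; the non-degeneracy lemma then forces $v_j \equiv 0$ on $B_{\kappa\rho/2}(x)$, contradicting $v_j(x) > 0$. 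Item (ii) is then immediate from (i) and interior gradient estimates for $p$-harmonic functions (Tolksdorf--Lieberman): at any $x$ with $d := \dist(x, \Omega_j) > 0$, $|\nabla v_j(x)| \leq \frac{C}{d}\sup_{B_{d/2}(x)} v_j \leq C'$ by (i), and $v_j \equiv 0$ on $\Omega_j$ extends this across the free boundary to a uniform Lipschitz bound. The upper density estimate in (iii) follows from the same ingredients: for $x \in \partial\Omega_j$, $v_j$ does not identically vanish on $B_{\kappa r}(x)$, so the contrapositive of non-degeneracy gives $\big(\fint_{\partial B_r(x)} v_j^\gamma\big)^{1/\gamma} \geq c_{nd} r$; combined with the Lipschitz bound $v_j \leq L r$ on $\partial B_r(x)$ and $v_j|_{\Omega_j} = 0$, this yields $\mathcal H^{N-1}\big(\partial B_r(x) \setminus \Omega_j\big) \geq c\,\mathcal H^{N-1}(\partial B_r(x))$, and integration in the radial variable delivers the claimed bound on $|B_r(x) \setminus \Omega_j|$.

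The main obstacle is the lower density in (iii), which is invisible to Lemma \ref{lip} and non-degeneracy alone and genuinely requires the minimality of $\Omega_j$. The plan, in the spirit of Alt--Caffarelli and as implemented for $p$-capacitary problems in \cite[Sec.~4]{DP}, is to construct an energy competitor by modifying $\Omega_j$ inside $B_r(x)$ and to play the monotone volume penalization $f_{\hat\eta}$ (whose gain is at least $\hat\eta\,|\Omega_j \cap B_r(x)|$ by \eqref{e:feta}) against the change in $p$-capacity (controlled through the Lipschitz bound from (ii) together with a Poincar\'e-type inequality on the set where $v_j$ vanishes) and the Lipschitz error in the asymmetry term (of order $\sigma\,|\Omega_j \cap B_r(x)|$, absorbed once $\sigma$ is taken small relative to $\hat\eta$). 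Closing this competition produces the desired bound $|\Omega_j \cap B_r(x)| \geq |B_r(x)|/C$ for $r \leq \rho_0$. Once the density estimates in (iii) are in hand, openness of $\Omega_j$ is automatic: they force every point of $\partial\Omega_j$ to have Lebesgue density strictly between $0$ and $1$, so $|\partial\Omega_j| = 0$, and $\Omega_j$ may be identified with its (open) topological interior.
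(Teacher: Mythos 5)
Your overall route — deducing (i), (ii) and the density estimates from Lemma~\ref{lip}, the non-degeneracy lemma, the comparison estimate of Lemma~\ref{l:varpcap}, and the Alt--Caffarelli machinery as adapted in~\cite{DP} — is exactly what the paper has in mind; the paper simply defers to ``Section~4 of~\cite{DP}'' and proves nothing here. Your derivations of (i), (ii), the upper density bound in (iii), and the identification of $\Omega_j$ with its interior via the Lebesgue density theorem are all correct and essentially complete (a standard technicality you elide is that one must be mildly careful that the balls used in Lemma~\ref{lip} and in the non-degeneracy lemma stay inside $B_R$, which is why the constant $\rho_0$ appears).

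The part that is not quite right is the sketch of the lower density bound $|\Omega_j\cap B_r(x)|\geq |B_r|/C$. For the competitor that replaces $v_j$ by its $p$-harmonic extension $w$ inside $B_{r/2}(x)$ (so $\{w=0\}=\Omega_j\setminus B_{r/2}(x)$), the set $\Omega_j$ \emph{shrinks}, the volume decreases, and since $f_{\hat\eta}$ is decreasing the penalization term \emph{increases}: it is a \emph{cost} to the competitor, and by \eqref{e:feta} that cost is bounded \emph{above} by $\hat\eta^{-1}|\Omega_j\cap B_{r/2}|$, not a ``gain'' bounded \emph{below} by $\hat\eta|\Omega_j\cap B_{r/2}|$. (If you instead enlarge $\Omega_j$, the $f_{\hat\eta}$ term does produce a gain, but then the correct factor is $|B_{r/2}\setminus\Omega_j|$, not $|\Omega_j\cap B_{r/2}|$, and this competitor gives only a trivial bound.) In fact you should simply invoke the first inequality of Lemma~\ref{l:varpcap}, which already packages the $f_{\hat\eta}$ and asymmetry errors with the correct sign and constant $(\hat\eta^{-1}+C\sigma)$. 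More substantively, the Poincar\'e step you allude to does not close on its own: from Lemma~\ref{l:varpcap} and Remark~\ref{varcapharm} one gets $\int_{B_{r/2}}|\nabla(v_j-w)|^p\leq C\,|\Omega_j\cap B_{r/2}|$, hence $\int_{B_{r/2}}|v_j-w|^p\leq Cr^p\,|\Omega_j\cap B_{r/2}|$; to turn this into a lower bound on $|\Omega_j\cap B_{r/2}|$ one needs a pointwise lower bound $w(x)\geq c\,r$ for the replacement at the centre $x\in\partial\Omega_j$, which is the crucial (and not entirely trivial) step you omit. It follows from non-degeneracy (which forces $v_j\gtrsim r$ on a fixed fraction of $\partial B_{r/2}(x)$, given the Lipschitz upper bound) together with a lower bound for a positive $p$-harmonic function whose boundary data is bounded below on a set of positive surface measure; combined with the Lipschitz bound on $v_j-w$ near $x$, this makes $\int_{B_{r/2}}|v_j-w|^p\gtrsim r^{p+N}$ and closes the argument. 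So: right strategy and correct ingredients, but the specific $f_{\hat\eta}$ claim is false as stated and the decisive pointwise estimate on the replacement is missing from your outline.
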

	
	Applying  \cite[Theorem 5.1]{DP} to \(v_j\) (for more details on the proof see \cite[Theorem 4.5]{AC}) we also have the following lemma.
	\begin{lemma}\label{conq}
		Let $v_j$ be as above, then there exists a Borel function $q_{u_j}$ such that
		\begin{equation} \label{eqonq}
			\div(\vert\nabla v_j\vert^{p-2} \nabla v_j)=q_{v_j}\mathcal{H}^{N-1}\mres\partial^*\Omega_j.
		\end{equation}
		Moreover, $0<c\leq -q_{v_j}\leq C$, \(c=c(n,R)\), \(C=C(n,R)\) and $\mathcal{H}^{N-1}(\partial\Omega_j\backslash\partial^*\Omega_j)=0$ .
	\end{lemma}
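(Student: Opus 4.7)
The plan is to recast $v_j$ as an almost-minimizer of a one-phase $p$-Bernoulli type energy and then invoke the free boundary regularity theorem \cite[Theorem 5.1]{DP} directly. By the previous lemma, $v_j$ is uniformly Lipschitz, has the linear non-degeneracy $v_j\gtrsim\dist(\cdot,\Omega_j)$, and $\Omega_j$ carries two-sided density bounds at every boundary point. These are precisely the hypotheses needed to run the Alt--Caffarelli--type machinery developed for the $p$-Laplacian in \cite{DP}, so what remains is (a) to identify $\mu_j:=-\div(|\nabla v_j|^{p-2}\nabla v_j)$ as a non-negative Radon measure concentrated on $\partial\Omega_j$, and (b) to verify the two-sided bound $0<c\le -q_{v_j}\le C$ on its $\mathcal{H}^{N-1}$-density.

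First I would establish (a). Since $v_j$ is $p$-harmonic in $\Omega_j^c$, the distribution $\mu_j$ is supported on $\partial\Omega_j$; its non-negativity comes from an outer competitor $w_t=(v_j-t\varphi)^+$ with $\varphi\in C^\infty_c(B_R)$, $\varphi\ge 0$, whose null set $\{w_t=0\}\supset\Omega_j$ gives a strictly larger volume. A first-order expansion in $t$, combined with \eqref{e:feta} for the $f_{\hat\eta}$ term and the $C(R)\sigma$-Lipschitz control on the asymmetry term from Lemma \ref{propasym}\ref{asymlip}, yields $\int|\nabla v_j|^{p-2}\nabla v_j\cdot\nabla\varphi\,dx\le 0$ once $\sigma$ is small enough that $f_{\hat\eta}$ dominates, and this is exactly non-negativity of $\mu_j$. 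For (b), the upper bound $\mu_j(B_r(x_0))\le Cr^{N-1}$ at $x_0\in\partial\Omega_j$ follows by testing the equation for $v_j$ against a standard cutoff and using the uniform bound on $|\nabla v_j|$ from Lemma \ref{lip}. For the lower bound I would use non-degeneracy to produce, on $\partial B_{r/2}(x_0)$, a value of $v_j$ of order $r$, and then compare with an inner competitor obtained by capping $v_j$ from below by a $p$-harmonic barrier on $B_r(x_0)$; almost-minimality then forces $\mu_j(B_r(x_0))\ge cr^{N-1}$. Together with the two-sided density estimates of the previous lemma, these bounds yield the representation $\mu_j=-q_{v_j}\mathcal{H}^{N-1}\mres\partial^*\Omega_j$ with $-q_{v_j}$ bounded between two positive constants and $\mathcal{H}^{N-1}(\partial\Omega_j\setminus\partial^*\Omega_j)=0$, as in \cite[Theorem 5.1]{DP} and \cite[Theorem 4.5]{AC}.

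The main obstacle is the non-local character of the asymmetry penalization: a variation of $\{v_j=0\}$ alters $\alpha(\Omega_j)$ through a global comparison with a ball, so a priori the first variation could pick up both a boundary-support contribution and a global term. My strategy is to exploit that the entire asymmetry penalization $\sqrt{\varepsilon_j^2+\sigma^2(\alpha-\varepsilon_j)^2}$ is only $C(R)\sigma$-Lipschitz in the symmetric-difference volume (the same observation used in Step~1 of the existence lemma), so for $\sigma\le\sigma(N,R)$ small it is dominated by the monotone volume term $f_{\hat\eta}$ in every local comparison. With this uniform absorption the competitor arguments become purely local, and the $p$-Bernoulli theory of \cite{DP} applies without further modification.
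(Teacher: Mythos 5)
Your proposal follows essentially the same route as the paper, which disposes of this lemma by citing \cite[Theorem~5.1]{DP} (see also \cite[Theorem~4.5]{AC}); you correctly reconstruct what that citation entails, and you put your finger on the right technical obstacle, namely that the nonlocal asymmetry penalization is only $C(R)\sigma$-Lipschitz in symmetric-difference volume and hence absorbed by $f_{\hat\eta}$ for $\sigma$ small, so the competitor arguments localize (this is precisely the content of Lemma~\ref{l:varpcap}). One small sign slip worth fixing: from the outer competitor $w_t=(v_j-t\varphi)^+$ you correctly deduce $\int|\nabla v_j|^{p-2}\nabla v_j\cdot\nabla\varphi\,dx\le 0$ for all $\varphi\ge 0$, but by the definition of distributional divergence this reads $\langle \div(|\nabla v_j|^{p-2}\nabla v_j),\varphi\rangle\ge 0$, so $\div(|\nabla v_j|^{p-2}\nabla v_j)$ is a \emph{non-negative} Radon measure (as it must be: $v_j$ is $p$-subharmonic, being $\equiv 0$ on $\Omega_j$ and $p$-harmonic outside), and therefore your $\mu_j:=-\div(|\nabla v_j|^{p-2}\nabla v_j)$ is non\emph{positive}, not non-negative as written. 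This is purely a labeling issue and does not affect the substance of the argument; note that the paper's own stated bound $0<c\le -q_{v_j}$ appears to carry the same sign quirk.
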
	
	Since $\Omega_j$ converge to $B_1$ in $L^1$ by Lemma \ref{pcapfirstpropmin}, the density estimates also give us
	the following convergence of boundaries.
	\begin{lemma}\label{l:conv1}
	Let \(\Omega_j\) be minimizers of \eqref{pcappertpb}. Then
 every limit point of $\Omega_j$ 
			with respect to $L^1$ convergence is the unit ball centered at some $x_\infty\in B_R$.
			Moreover, the convergence holds also in the Kuratowski sense.
	\end{lemma}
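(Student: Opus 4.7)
My plan is to combine lower semicontinuity of $\mathscr{C}_{\hat{\eta}}$ under $L^1$ convergence with the rigidity of Lemma \ref{pcappropofCeta}, and then upgrade $L^1$ convergence to Kuratowski convergence via the uniform density estimates of the preceding lemma.

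For the first part, let $\Omega_{j_k}\to\Omega_\infty$ in $L^1$; such a subsequence exists thanks to the equibounded perimeters of the $\Omega_j$. The equi-Lipschitz bound on $v_j=1-u_j$ lets me extract a uniform limit $u_\infty$ on $\overline{B_R}$, and the density estimates give $\Omega_\infty=\{u_\infty=1\}$ up to a null set. Weak convergence of $u_{j_k}$ in $D^{1,p}(\mathds{R}^N)$ together with lower semicontinuity of the Dirichlet $p$-energy yields $\Capa_p(\Omega_\infty)\le \liminf_k \Capa_p(\Omega_{j_k})$. Combined with continuity of $f_{\hat{\eta}}(|\cdot|)$ along $L^1$-convergent sequences, the fact that $\sqrt{\varepsilon_{j_k}^2+\sigma^2(\alpha(\Omega_{j_k})-\varepsilon_{j_k})^2}\to 0$ (which follows from $\varepsilon_{j_k}\to 0$ and Lemma \ref{pcapfirstpropmin} (i)), and Lemma \ref{pcapfirstpropmin} (iv), this gives $\mathscr{C}_{\hat{\eta}}(\Omega_\infty)\le \mathscr{C}_{\hat{\eta}}(B_1)$. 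Since $\Omega_\infty\subset \overline{B_R}$, Lemma \ref{pcappropofCeta} then forces $\Omega_\infty=B_1(x_\infty)$ for some $x_\infty$ with $|x_\infty|\le R-1$, so in particular $x_\infty\in B_R$.

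For the Kuratowski convergence, I would argue separately at interior, exterior, and boundary points of $B_1(x_\infty)$. For $x\in B_1(x_\infty)$, for small $r>0$ the $L^1$ convergence gives $|\Omega_{j_k}\cap B_r(x)|\to |B_r(x)|$, so eventually $\Omega_{j_k}\cap B_r(x)\neq \emptyset$, placing $x$ in the Kuratowski inner limit. For $x\notin \overline{B_1(x_\infty)}$, choose $r>0$ so that $B_r(x)$ is disjoint from $B_1(x_\infty)$: then $|\Omega_{j_k}\cap B_r(x)|\to 0$, while the lower density estimate forces $\partial\Omega_{j_k}\cap B_{r/2}(x)=\emptyset$ for $k$ large, so $B_{r/2}(x)\cap \Omega_{j_k}=\emptyset$ eventually; hence $x$ is excluded from the Kuratowski outer limit. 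Boundary points $x\in\partial B_1(x_\infty)$ are handled by applying the upper density estimate to any sequence $x_k\in \partial \Omega_{j_k}$ with $x_k\to x$.

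The delicate step is the passage to the limit in $\Capa_p$: one must identify the uniform limit $u_\infty$ of the capacitary potentials as an admissible competitor for $\Capa_p(\Omega_\infty)$, which amounts to matching $\{u_\infty=1\}$ with $\Omega_\infty$ almost everywhere. This is precisely where the equi-Lipschitz regularity and uniform density estimates from Section \ref{reg} enter; without such pointwise control, $L^1$ convergence of the sets alone would not suffice to identify the limit.
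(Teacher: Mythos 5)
Your proof is correct and follows the route the paper intends, which is very terse at this point: the paper simply notes that the $L^1$ convergence is already Lemma~\ref{pcapfirstpropmin}~\textup{(iii)} and that the density estimates upgrade it to Kuratowski. The only genuine difference is that you re-derive the $L^1$ convergence from scratch (via weak $D^{1,p}$ compactness, lower semicontinuity, and Lemma~\ref{pcappropofCeta}) rather than invoking Lemma~\ref{pcapfirstpropmin}~\textup{(iii)} directly; this is redundant given that item \textup{(iii)} is available, but it is a valid and self-contained alternative (it essentially reproduces the argument of the cited reference for \textup{(iii)}). You correctly identify the one delicate step in this re-derivation — that the locally uniform limit $u_\infty$ must be an admissible competitor for $\Capa_p(\Omega_\infty)$, which follows from the a.e.\ convergence $1_{\Omega_{j_k}}\to 1_{\Omega_\infty}$ plus uniform convergence of $u_{j_k}$ so that $\Omega_\infty\subset\{u_\infty=1\}$ a.e. Two small points worth tightening: in the exterior argument you should take $r\le\rho_0$ (the density estimates are only stated for radii below $\rho_0$), and the sentence on boundary points is vague — what is actually needed there is that $\partial B_1(x_\infty)$ lies in the Kuratowski lower limit of $\partial\Omega_{j_k}$ (which uses that $|\Omega_{j_k}\cap B_r(x)|$ and $|\Omega_{j_k}^c\cap B_r(x)|$ both stay bounded away from $0$) and that the upper limit of $\partial\Omega_{j_k}$ avoids both the open interior and exterior of $B_1(x_\infty)$, for which the two-sided density estimate is indeed the right tool.
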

	\begin{cor} \label{closetoball}
	  In the setting of Lemma \ref{l:conv1}, for every $\delta>0$ there exists \(j_\delta\) such that for $j\ge j_\delta$	  
	  $$B_{1-\delta}(x_j)\subset\Omega_j\subset B_{1+\delta}(x_j)$$ for some $x_j\in B_R$.
	\end{cor}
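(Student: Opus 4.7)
I argue by contradiction. If the corollary fails, there exist $\delta_0>0$ and a subsequence $\Omega_{j_k}$ such that for every $x\in B_R$ at least one of the inclusions $B_{1-\delta_0}(x)\subset\Omega_{j_k}$ or $\Omega_{j_k}\subset B_{1+\delta_0}(x)$ fails. By Lemma \ref{l:conv1}, after extracting a further (not relabelled) subsequence I may find $x_\infty\in B_R$ with $\Omega_{j_k}\to B_1(x_\infty)$ both in $L^1$ and in the Kuratowski sense. Taking the candidate point $x_{j_k}:=x_\infty$, I will show that both inclusions hold for $k$ large, contradicting the assumption.

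The outer inclusion $\Omega_{j_k}\subset B_{1+\delta_0}(x_\infty)$ is essentially immediate from Kuratowski convergence: if along a subsequence there were points $y_k\in\Omega_{j_k}$ with $|y_k-x_\infty|\ge 1+\delta_0$, compactness would produce a limit $y_\infty$ with $|y_\infty-x_\infty|\ge 1+\delta_0$, contradicting the fact that the outer Kuratowski limit of $\{\Omega_{j_k}\}$ is contained in $\overline{B_1(x_\infty)}$.

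For the inner inclusion $B_{1-\delta_0}(x_\infty)\subset\Omega_{j_k}$, Kuratowski convergence is not by itself enough, so I will rely on the uniform density estimates on $\partial\Omega_{j_k}$ provided by the preceding lemma together with the $L^1$ convergence. Suppose the inclusion fails infinitely often and pick $z_k\in B_{1-\delta_0}(x_\infty)\setminus\Omega_{j_k}$. Set $r:=\min(\rho_0,\delta_0/4)$, so that $B_r(z_k)\subset B_1(x_\infty)$. Either $B_r(z_k)\subset\Omega_{j_k}^c$, in which case $|B_1(x_\infty)\setminus\Omega_{j_k}|\ge |B_r|$, contradicting $|\Omega_{j_k}\Delta B_1(x_\infty)|\to 0$; or the segment from $z_k$ to some point of $\Omega_{j_k}\cap B_r(z_k)$ must cross $\partial\Omega_{j_k}$, producing a boundary point $\tilde z_k\in\partial\Omega_{j_k}\cap B_r(z_k)$. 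Since $|\tilde z_k-x_\infty|<1-\delta_0+r\le 1-3\delta_0/4$, one still has $B_r(\tilde z_k)\subset B_1(x_\infty)$, and the lower density estimate $|\Omega_{j_k}^c\cap B_r(\tilde z_k)|\ge cr^N$ once again contradicts $L^1$ convergence.

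The main delicate point is precisely the inner inclusion: Kuratowski convergence of $\Omega_{j_k}$ does not automatically yield Kuratowski (or Hausdorff) convergence of the complements, and the density estimate on the free boundary from the previous lemma is exactly the tool that converts a \emph{single} bad point deep inside $B_1(x_\infty)$ into a \emph{uniform} loss of volume, which is then ruled out by $L^1$ convergence.
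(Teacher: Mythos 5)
Your proof is correct and is essentially the standard argument intended by the paper (which states the corollary without proof, as an immediate consequence of Lemma~\ref{l:conv1}). You correctly isolate the one subtle point: Kuratowski convergence of the sets $\Omega_j$ alone would not give the inner inclusion uniformly, and the uniform density estimates at free boundary points are exactly what converts a single excluded point deep inside the limit ball into a fixed loss of $L^1$ mass. The outer inclusion you derive from Kuratowski upper convergence plus boundedness in $B_R$; it could equally be obtained by the same density argument applied to a point of $\Omega_{j_k}$ outside $B_{1+\delta_0}(x_\infty)$, so the two halves are in fact symmetric. The paper's intended route is the same in substance: the density estimates upgrade $L^1$ convergence to Kuratowski/Hausdorff convergence of the boundaries $\partial\Omega_j\to\partial B_1(x_\infty)$, after which $\partial\Omega_j$ lies in a thin annulus around $\partial B_1(x_\infty)$ and the two inclusions follow from connectedness of the inner ball and of the exterior together with $L^1$ convergence. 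Your proof bypasses the explicit statement of boundary Hausdorff convergence and works directly with the density estimates, but this is a bookkeeping difference only.
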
	
	
	\subsection{Higher regularity of the free boundary}
	
	{  To address the higher regularity of $\partial\Omega_j$, we are first going to prove that $q_{v_j}$ is smooth
	and then use the free boundary regularity result of \cite{DP}. Note that in order to apply
	\cite[Theorem 9.1]{DP} we need to know that $\partial\Omega_j$ is flat, which follows from closeness of sets $\Omega_j$
	to the ball.}
	  
	We show smoothness of $q_{v_j}$ by using the Euler-Lagrange equations for our minimizing problem. 
	We defined $\Omega_j$ in such a way that the following minimizing property holds
			\begin{equation}\label{minineq}
			\begin{aligned}
				\cpp{\mathds{R}^N}{v_j}+f_{\hat{\eta}}(\vert\{v_j=0\}\vert)+\sqrt{\varepsilon_j^2+\sigma^2(\alpha(\{v_j=0\})-\varepsilon_j)^2}\\
				\leq\cpp{\mathds{R}^N}{v}+f_{\hat{\eta}}(\vert\{v=0\}\vert)+\sqrt{\varepsilon_j^2+\sigma^2(\alpha(\{v=0\})-\varepsilon_j)^2}
			\end{aligned}	
			\end{equation}	
			for any $v\in W^{1,2}(\mathds{R}^N)$ such that $0\leq v\leq 1$, $\{v=0\}\subset B_R$.
			
		To write Euler-Lagrange equations for $v_j$, we need to have \eqref{minineq} 
	for  $v_j\circ \Phi$  where \(\Phi\) is a diffeomorphism of \(\mathds R^N\) close to the identity. Note that to make sure that \(\{v_j\circ\Phi=0\}\) is contained in \(B_R\) one needs to know that \(\dist(\{v_j=0\},\partial B_R)>0\).  This follows from Corollary \ref{closetoball}, up to translating \(\Omega_j\). More precisely we will get the following optimality condition
	
	\begin{equation*}
	(p-1)q_{v_j}^p-\frac{\sigma^2(\alpha(\Omega_j)-\varepsilon_j)}{\sqrt{\varepsilon_j^2+\sigma^2(\alpha(\Omega_j)-\varepsilon_j)^2}}\left(\vert x-x_{\Omega_j}\vert-\left(\fint_{\Omega_j}\frac{y-x_{\Omega_j}}{\vert y-x_{\Omega_j}\vert}dy\right)\cdot x\right)
	=\Lambda_j
	\end{equation*}
	for some constant $\Lambda_j>0$. This equation is an immediate consequence of the following lemma whose proof is almost the same as \cite[Lemma 4.15]{fkstab} (which in turn is based on~\cite{AguileraAltCaffarelli86}). For this reason we only highlight the most relevant changes, referring the reader to \cite[Lemma 4.15]{fkstab} for more details.
	
	\begin{lemma}\label{l:EL} There exists $j_0$ such that for any $j\geq j_0$ and any two points $x_1$ and $x_2$
	in the reduced boundary of $\Omega_j$ the following equality holds:
		\begin{equation*}
		\begin{aligned}
			&(p-1)q_{v_j}^p(x_1)-\frac{\sigma^2(\alpha(\Omega_j)-\varepsilon_j)}{\sqrt{\varepsilon_j^2+\sigma^2(\alpha(\Omega_j)-\varepsilon_j)^2}}\left(\vert x_1-x_{\Omega_j}\vert-\left(\fint_{\Omega_j}\frac{y-x_{\Omega_j}}{\vert y-x_{\Omega_j}\vert}dy\right)\cdot x_1\right)\\
			&=(p-1)q_{v_j}^p(x_2)-\frac{\sigma^2(\alpha(\Omega_j)-\varepsilon_j)}{\sqrt{\varepsilon_j^2+\sigma^2(\alpha(\Omega_j)-\varepsilon_j)^2}}\left(\vert x_2-x_{\Omega_j}\vert-\left(\fint_{\Omega_j}\frac{y-x_{\Omega_j}}{\vert y-x_{\Omega_j}\vert}dy\right)\cdot x_2\right).
		\end{aligned}	
		\end{equation*}
	\end{lemma}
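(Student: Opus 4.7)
The plan is to derive the identity by performing localized inner variations around $x_1$ and $x_2$, following the strategy of \cite{AguileraAltCaffarelli86} and \cite[Lemma 4.15]{fkstab}, with two adjustments for our setting: the leading contribution comes from the first variation of $p$-capacity rather than classical capacity, and the asymmetry functional $\alpha$ depends nontrivially on the barycenter $x_{\Omega_j}$. Concretely, I fix $x_1,x_2\in\partial^*\Omega_j$, select non-negative bump functions $\rho_i$ concentrated on $\partial^*\Omega_j$ near $x_i$, and for $|t|$ small I construct a smooth diffeomorphism $\Phi_t$ of $\mathds{R}^N$ whose generating vector field $X$ satisfies $X\cdot\nu=\rho_1-c\rho_2$ on $\partial^*\Omega_j$, where $c>0$ is chosen so that $\int_{\partial^*\Omega_j}(X\cdot\nu)\,d\mathcal{H}^{N-1}=0$. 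The admissibility $\Omega_j^t:=\Phi_t(\Omega_j)\subset B_R$ for all small $t$ is where the threshold $j\geq j_0$ is needed: by Corollary \ref{closetoball}, after possibly translating we may assume $\Omega_j\subset B_{R-\delta}$ for some fixed $\delta>0$, leaving room for the perturbation.

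I then expand $\mathscr{C}_{\hat\eta,j}(\Omega_j^t)-\mathscr{C}_{\hat\eta,j}(\Omega_j)\geq0$ to first order in $t$, computing each summand by Hadamard's formula. The $p$-capacity term contributes $-(p-1)\int_{\partial^*\Omega_j}q_{v_j}^p(X\cdot\nu)\,d\mathcal{H}^{N-1}$, by the argument leading to \eqref{e:first derivative at zero p-capacity} combined with Lemma \ref{conq}. The volume penalty contributes $f_{\hat\eta}'(|\Omega_j|^{\pm})\int_{\partial^*\Omega_j}(X\cdot\nu)\,d\mathcal{H}^{N-1}$, which vanishes thanks to our choice of $c$, so the kink of $f_{\hat\eta}$ at $\omega_N$ causes no problems. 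The asymmetry contribution requires care: using the identity $\alpha(\Omega)=c_N-\int_\Omega(1-|x-x_\Omega|)\,dx$ and differentiating the right-hand side while accounting both for the variation of the domain and of the barycenter $x_\Omega(t)$, and then absorbing the constant parts into the zero-mean condition on $X\cdot\nu$, one obtains the boundary integrand $|x-x_{\Omega_j}|-\bigl(\fint_{\Omega_j}\frac{y-x_{\Omega_j}}{|y-x_{\Omega_j}|}\,dy\bigr)\cdot x$, multiplied by the factor $\sigma^2(\alpha(\Omega_j)-\varepsilon_j)/\sqrt{\varepsilon_j^2+\sigma^2(\alpha(\Omega_j)-\varepsilon_j)^2}$ arising from the outer square root.

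Writing $\Psi_j(x)$ for the bracketed expression in the statement of the lemma, minimality combined with the two signs $\pm t$ produces
\[
\int_{\partial^*\Omega_j}\Psi_j(x)\,(X\cdot\nu)\,d\mathcal{H}^{N-1}=0
\]
for every admissible $X$ of zero flux. Letting the supports of $\rho_i$ shrink to $\{x_i\}$ and exploiting the continuity of $\Psi_j$ on $\partial^*\Omega_j$ (the continuity of $q_{v_j}$ on reduced-boundary points follows from the Lipschitz, non-degeneracy, and comparison estimates established earlier in Section \ref{reg}), one extracts the pointwise equality $\Psi_j(x_1)=\Psi_j(x_2)$.

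The main difficulty lies in the shape derivative of $\alpha$: one has to track simultaneously the motion of $\partial\Omega_j^t$ and of $x_{\Omega_j^t}$, and observe that the apparent boundary integrand $|1-|x-x_{\Omega_j}||$ combines with the barycenter chain-rule term, modulo constants killed by the zero-flux condition, to produce exactly the expression $|x-x_{\Omega_j}|-\bigl(\fint_{\Omega_j}\frac{y-x_{\Omega_j}}{|y-x_{\Omega_j}|}\,dy\bigr)\cdot x$. The remainder is a verbatim adaptation of \cite[Lemma 4.15]{fkstab}, with the only substantive change being the replacement of the harmonic-capacity factor $q_{v_j}^2$ by $(p-1)q_{v_j}^p$ via \eqref{e:first derivative at zero p-capacity}.
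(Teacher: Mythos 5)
Your proposal is correct and takes essentially the same route as the paper: localized inner variations supported near $x_1$ and $x_2$, with the volume contribution cancelled by balancing the two bumps, the $p$-capacity contribution given by the first-variation formula $-(p-1)\int q_{v_j}^p(X\cdot\nu)$, and the asymmetry contribution computed by tracking both the moving domain and the moving barycenter. The paper phrases the argument by contradiction with a specific radial bump $\phi$ at scale $\rho$ (its two symmetric bumps give zero flux automatically, which plays the role of your constant $c$), but the underlying first-variation identity and limiting localization are the same.
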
	
	\begin{proof}
		We argue by contradiction. Assume there exist $x_1,x_2\in\partial^*\{v_j=0\}$ 
		such that
		\begin{equation}\label{contrEL1}
		\begin{aligned}
			&(p-1)q_{v_j}^{p-1}(x_1)-\frac{\sigma^2(\alpha(\Omega_j)-\varepsilon_j)}{\sqrt{\varepsilon_j^2+\sigma^2(\alpha(\Omega_j)-\varepsilon_j)^2}}\left(\vert x_1-x_{\Omega_j}\vert-\left(\fint_{\Omega_j}\frac{y-x_{\Omega_j}}{\vert y-x_{\Omega_j}\vert}dy\right)\cdot x_1\right)\\
			&<(p-1)q_{v_j}^{p-1}(x_2)-\frac{\sigma^2(\alpha(\Omega_j)-\varepsilon_j)}{\sqrt{\varepsilon_j^2+\sigma^2(\alpha(\Omega_j)-\varepsilon_j)^2}}\left(\vert x_2-x_{\Omega_j}\vert-\left(\fint_{\Omega_j}\frac{y-x_{\Omega_j}}{\vert y-x_{\Omega_j}\vert}dy\right)\cdot x_2\right).
		\end{aligned}	
		\end{equation}
		Using this inequality, we are going to construct a variation contradicting \eqref{minineq}. We take a smooth radial symmetric function $\phi(x)=\phi(\vert x\vert)$ supported in $B_1$ and define the following diffeomorphism for small $\tau$ and $\rho$:
		\begin{equation*}
			\Phi_\tau^\rho(x)=
			\begin{cases}
				x+\tau\rho\phi(\vert\frac{x-x_1}{\rho}\vert)\nu(x_1), &x\in B_\rho(x_1),\\
				x-\tau\rho\phi(\vert\frac{x-x_2}{\rho}\vert)\nu(x_2), &x\in B_\rho(x_2),\\
				x, &\text{otherwise.}
			\end{cases}
		\end{equation*}
		We define the function $$v^\rho_\tau:=v\circ(\Phi_\tau^\rho)^{-1}$$ and we define a competitor domain $\Omega_\tau^\rho$ as follows: 
		 $$\Omega_\tau^\rho:=\{v^\rho_\tau=0\}.$$
		Note that $1-v_\tau^\rho$ is a competitor for the $p$-capacity of $\Omega_\tau^\rho$, so we have
		$$\Capa_p(\Omega_\tau^\rho)\leq\int_{(\Omega_\tau^\rho)^c}{\vert\nabla v_\tau^\rho\vert^p}.$$
		
		Now we are going to show that for $\tau$ and $\rho$ small enough $\mathscr{C}_{\hat{\eta}}(\Omega_\tau^\rho)<\mathscr{C}_{\hat{\eta}}(\Omega)$. To do that, we first compute the variation of all the terms involved in $\mathscr{C}_{\hat{\eta}}$. 

\medskip		
\noindent
		\textbf{Volume.}
		By arguing as in  \cite[Lemma 4.15]{fkstab} one gets
		\begin{equation*}
		\begin{aligned}
			\vert\Omega_\tau^\rho\vert-\vert\Omega\vert
                         &=\tau\rho^N\left(\int_{\{y\cdot\nu(x_1)=0\}\cap B_1}{\phi\left(\left\vert y\right\vert\right)}
			-\int_{\{y\cdot\nu(x_2)=0\}\cap B_1}{\phi\left(\left\vert y\right\vert\right)}\right)
			+o(\tau)\rho^N+o_\tau(\rho^N)
			\\
			&=o(\tau)\rho^N+o_\tau(\rho^N),
		\end{aligned}
		\end{equation*}
where \(o_\tau(\rho^N)\rho^{-N}\) goes to zero as \(\rho \to 0\) and  \(o(\tau)\) is independent on \(\rho\).

\medskip		
\noindent
\textbf{Barycenter.} Assume that  that $x_\Omega=0$, as in \cite[Lemma 4.15]{fkstab} one gets,

			\[			x_{\Omega_\tau^\rho}=-\rho^N\tau\frac{x_1-x_2}{\vert\Omega\vert}\left(\int_{\{y_1=0\}\cap B_1}{\phi(\vert y\vert)}\right)+\rho^No(\tau)+o_\tau(\rho^N).
		\]
		
		\medskip
		\noindent
		\textbf{Asymmetry.}
		Again by the very same computations as in \cite[Lemma 4.15]{fkstab} one gets
		\begin{equation*}
		\begin{split}
		\alpha(\Omega_\tau^\rho)-\alpha(\Omega)
			=-\rho^N\tau\left(\int_{\{y_1=0\}\cap  B_1}{\phi(\vert y\vert)}\right)\Big(\vert x_1\vert-\vert x_2\vert+\left(\fint_\Omega{\frac{y}{\vert y\vert}dy}\right)\cdot(x_1-x_2)\Big)\\
			+o(\tau)\rho^N+o_\tau(\rho^N).
		\end{split}	
		\end{equation*}	
		\medskip
		\noindent		
		\textbf{Dirichlet energy}. Here one can argue as in \cite[Lemma 3.19]{FZ} to get 
		\begin{equation*}
		\begin{split}
			\Capa_p(\Omega_\tau^\rho)-\Capa_p(\Omega)\le \tau\rho^N(p-1)\left(\vert q(x_1)\vert^p-\vert q(x_2)\vert^p\right)\int_{B_1\cap\{y_1=0\}}{\phi(\vert y\vert)}dy\\
			+o(\tau)\rho^N+o_\tau(\rho^N).
		\end{split}	
		\end{equation*}
Combining the above estimates one gets
			\begin{equation*}
			\begin{aligned}
				&\left(\int_{B_1\cap\{y_1=0\}}{\phi(\vert y\vert)}dy\right)^{-1}\frac{\mathscr{C}_{\hat{\eta},j}(\Omega_\tau^\rho)-\mathscr{C}_{\hat{\eta},j}(\Omega)}{\rho^N}
				=\tau\left((p-1)\left(\vert q(x_1)\vert^p-\vert q(x_2)\vert^p\right)\right)\\
				&-\tau\frac{\sigma^2(\alpha(\Omega)-\varepsilon_j)}{\sqrt{\varepsilon_j^2+\sigma^2(\alpha(\Omega)-\varepsilon_j)^2}}\left(\vert x_1\vert-\vert x_2\vert+\left(\fint_\Omega{\frac{y}{\vert y\vert}dy}\right)\cdot(x_1-x_2)\right)+o(\tau)+o_\tau(1).
			\end{aligned}	
			\end{equation*}
	According to \eqref{contrEL1} the quantity in  parentheses is strictly negative. Thus, we get a contradiction with the minimality of $\Omega$ for \(\rho\) and $\tau$ small enough.
	\end{proof}

	\begin{lemma}[Smoothness of $q_v$]
		There exist constants $\delta=\delta(N,R)>0$, $j_0=j_0(N,R)$, $\sigma_0=\sigma_0(N,R)>0$
		such that for every $j\geq j_0$, $\sigma\leq\sigma_0$ the functions $q_{v_j}$ belong to $C^\infty(\mathcal{N}_\delta(\partial\Omega_j))$.
		
		Moreover, for every $k$ there exists a constant $C=C(k,N,R)$ such that
		\begin{equation*}
			\Vert q_{v_j}\Vert_{C^k(\mathcal{N}_\delta(\partial\Omega_j))}\leq C
		\end{equation*}
		for every $j\geq j_0$.
	\end{lemma}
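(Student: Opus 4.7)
The starting point is the Euler--Lagrange identity from Lemma \ref{l:EL}, which tells us that there is a constant $\Lambda_j>0$ such that, on the reduced boundary $\partial^*\Omega_j$,
\[
(p-1)q_{v_j}(x)^p \;=\; \Lambda_j \;+\; a_j\Big(\lvert x-x_{\Omega_j}\rvert - b_j\cdot x\Big),
\]
where
\[
a_j:=\frac{\sigma^2(\alpha(\Omega_j)-\varepsilon_j)}{\sqrt{\varepsilon_j^2+\sigma^2(\alpha(\Omega_j)-\varepsilon_j)^2}}, \qquad b_j:=\fint_{\Omega_j}\frac{y-x_{\Omega_j}}{\lvert y-x_{\Omega_j}\rvert}\,dy.
\]
This immediately suggests the plan: we will \emph{define} an extension $Q_j$ of $q_{v_j}$ in a tubular neighborhood of $\partial\Omega_j$ by solving the above algebraic equation for $q_{v_j}$, and then check that the right-hand side is uniformly smooth there.

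First, I would gather uniform bounds. The scalar $a_j$ is bounded by $\sigma$ in absolute value (since $|a_j|\le\sigma$ by inspection), and the vector $b_j$ satisfies $|b_j|\le 1$ trivially. The constant $\Lambda_j$ is uniformly bounded above and below because $q_{v_j}$ is (by Lemma \ref{conq}, $0<c\le -q_{v_j}\le C$), and because $x_{\Omega_j}$ stays in $B_R$. Next, I would use Corollary \ref{closetoball} together with the fact that the barycenter $x_{\Omega_j}$ is close to $x_j$ (by $L^1$-convergence and boundedness of $\Omega_j$) to conclude that, for $j$ large, the entire boundary $\partial\Omega_j$ sits in an annular neighborhood of $x_{\Omega_j}$ of inner radius at least $1/2$. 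Consequently there exists $\delta=\delta(N,R)>0$ such that
\[
\lvert x - x_{\Omega_j}\rvert \;\ge\; \tfrac{1}{4} \qquad \text{for every } x\in\mathcal{N}_\delta(\partial\Omega_j),
\]
so the map $x\mapsto\lvert x-x_{\Omega_j}\rvert$ is smooth on $\mathcal{N}_\delta(\partial\Omega_j)$ with uniform $C^k$ bounds.

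Then I would set, on $\mathcal{N}_\delta(\partial\Omega_j)$,
\[
Q_j(x)\;:=\;-\left(\frac{1}{p-1}\Big[\Lambda_j + a_j\big(\lvert x-x_{\Omega_j}\rvert - b_j\cdot x\big)\Big]\right)^{1/p},
\]
the sign chosen because $q_{v_j}<0$. The argument of the $p$-th root is bounded away from zero provided $\sigma_0=\sigma_0(N,R)$ is small enough, since the smallness of $|a_j|\le\sigma$ forces the bracket to stay close to the positive value $\Lambda_j/(p-1)$, which is uniformly bounded below by Lemma \ref{conq}. Thus $Q_j\in C^\infty(\mathcal{N}_\delta(\partial\Omega_j))$ and its $C^k$ norm is controlled by the uniform bounds on $\Lambda_j$, $a_j$, $b_j$, $x_{\Omega_j}$ and on $\lvert x-x_{\Omega_j}\rvert^{-1}$. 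Finally, by Lemma \ref{l:EL} we have $Q_j=q_{v_j}$ on $\partial^*\Omega_j$, and since $\mathcal{H}^{N-1}(\partial\Omega_j\setminus\partial^*\Omega_j)=0$ by Lemma \ref{conq}, $Q_j$ is the desired smooth representative of $q_{v_j}$.

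The only genuine obstacle is making sure the point $x_{\Omega_j}$ (the singularity of $x\mapsto|x-x_{\Omega_j}|$) stays uniformly far from $\partial\Omega_j$; this is handled exactly by Corollary \ref{closetoball} and the convergence of barycenters. Everything else is bookkeeping of uniform bounds that have already been established.
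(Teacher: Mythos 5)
Your proof is correct and follows essentially the same route as the paper: both solve the Euler--Lagrange identity from Lemma \ref{l:EL} for $q_{v_j}$, verify that the argument of the $p$-th root is bounded away from zero for $\sigma$ small (using the uniform bounds on $q_{v_j}$ from Lemma \ref{conq} to control $\Lambda_j$), and read off the $C^\infty$ bounds from the explicit formula. You are in fact slightly more careful than the paper in explicitly noting, via Corollary \ref{closetoball} and convergence of the barycenters, that $x_{\Omega_j}$ stays uniformly away from $\mathcal{N}_\delta(\partial\Omega_j)$ so that $x\mapsto|x-x_{\Omega_j}|$ is smooth there.
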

	\begin{proof}
		We would like to write an explicit formula for $q_{v_j}$ using Euler-Lagrange equations, namely
	\begin{equation*}
		q_{v_j}=-\left(\frac{\sigma^2(\alpha(\Omega_j)-\varepsilon_j)}{\sqrt{\varepsilon_j^2+\sigma^2(\alpha(\Omega_j)-\varepsilon_j)^2}}\left(\vert x-x_{\Omega_j}\vert-\left(\fint_{\Omega_j}\frac{y-x_{\Omega_j}}{\vert y-x_{\Omega_j}\vert}dy\right)\cdot x\right)
	+\Lambda_j\right)^\frac{1}{p}.
	\end{equation*}
	To do that, we need to show that the quantity in the parenthesis is bounded away from zero.
	Indeed, $q_{v_j}$ is bounded from above and below independently of $j$ and
	\begin{equation*}
		\left\vert\frac{\sigma^2(\alpha(\Omega_j)-\varepsilon_j)}{\sqrt{\varepsilon_j^2+\sigma^2(\alpha(\Omega_j)-\varepsilon_j)^2}}\left(\vert x-x_{\Omega_j}\vert-\left(\fint_{\Omega_j}\frac{y-x_{\Omega_j}}{\vert y-x_{\Omega_j}\vert}dy\right)\cdot x\right)
	\right\vert\leq C(N,R)\sigma.
	\end{equation*}
	Then it follows from the Euler-Lagrange equations that also 
	$\Lambda_j$ is bounded from above and below
	independently of $j$. Thus, for $\sigma$ small enough we can write the above-mentioned explicit formula for $q_{v_j}$ and get the conclusion of the lemma.
	\end{proof}
	
Now we want to apply the results of \cite{DP}. We can't apply them directly, since the equation there is slightly different. More precisely, in \cite{DP} the authors are considering solutions of the equation
\[	\div\left(\vert\nabla u\vert^{p-2}\nabla u\right)=\mathcal{H}^{n-1}\mres\partial\{u>0\},
\] 
whereas \(v_j\) satisfies
\[	\div\left(\vert\nabla v_j\vert^{p-2}\nabla v_j\right)=q_{v_j}\mathcal{H}^{n-1}\mres\partial\{v_j>0\}.
\] 
However, since $q_{v_j}$ is smooth, the proof works in exactly the same way	(see also Appendix of \cite{FZ}	for the same result for a slightly different equation, the proof becomes more involved in that case).
	The idea is that flatness improves in smaller balls if the free
	boundary is sufficiently flat in some ball.

First, we need to recall the definition of flatness for the free boundary, see  \cite[Definition 7.1]{AC} (here it is applied to \(v\)).

	\begin{defin} 
		Let $\mu_-,\mu_+\in(0, 1]$. A weak solution $v$ of \eqref{eqonq} is said to be of class
$F(\mu_-,\mu_+,\infty)$ in $B_\rho(x_0)$ in a direction $\nu\in S^{N-1}$ if	$x_0\in\partial\{v=0\}$ and
				\[		
				\begin{cases}
					v(x)=0 \qquad&\text{ for }(x-x_0)\cdot\nu\leq-\mu_-\rho,\\
					v(x)\geq q_v(x_0)((x-x_0)\cdot\nu-\mu_+\rho) &\text{ for } (x-x_0)\cdot\nu\geq\mu_+\rho.\\
				\end{cases}
		\]
{ 		See also Figure \ref{fig:flat}.}
	\end{defin}
	{ 
	\begin{rem}
		In particular, if the free boundary is sufficiently flat near $x_0$, that is, it is contained in a thin strip,
		then the corresponding $v$ is of class $F(\mu,1,\infty)$ in a small ball around $x_0$ for some $\mu$.
	\end{rem}}
	
\begin{figure}[t]
\includegraphics[width=5in]{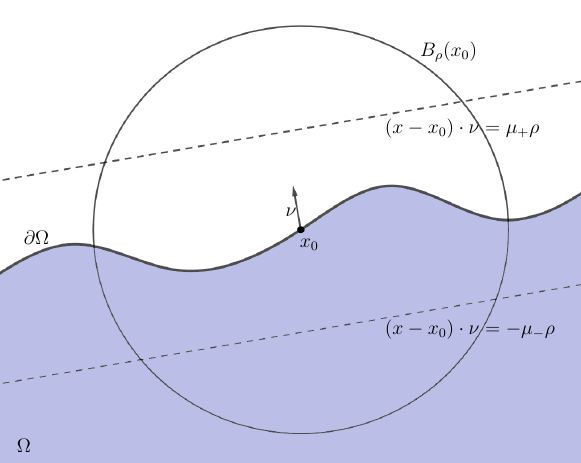}
\caption{$v$ of the class $F(\mu_-,\mu_+,\infty)$ in $B_\rho(x_0)$ in a direction $\nu\in S^{N-1}$}
\centering
\label{fig:flat}
\end{figure}	
	
	
	We are going to use that flat free boundaries are  smooth.
	The following theorem is a slight generalization of
	\cite[Theorem 9.1]{DP} and we omit the proof since it is almost identical.
	
	\begin{theorem}\label{thm:pcapregflat}  Let $u$ be a weak solution of \eqref{eqonq} and assume that $q_v$ is Lipschitz continuous. There are constants $\gamma,\mu_0,\kappa, C$ such that if $v$ is of class $F(\mu, 1, \infty)$ in $B_{4\rho}(x_0)$ in some direction $\nu\in S^{N-1}$ with $\mu\leq\mu_0$ and $\rho\leq\kappa\mu^2$, then there exists a $C^{1,\gamma}$ function $f:\mathds{R}^{N-1}\rightarrow\mathds{R}$ with $\Vert f\Vert_{C^{1,\gamma}}\leq C\mu$ such that
\begin{equation}
	\partial\{v=0\}\cap B_\rho(x_0)=(x_0+\graph_\nu f)\cap B_\rho(x_0),
\end{equation}
where $\graph_\nu f=\{x\in\mathds{R}^N:x\cdot\nu=f(x-x\cdot\nu)\nu)\}$.

Moreover if $q_v\in C^{k,\gamma}$ in some neighborhood of $\{u_j=1\}$, then $f\in C^{k+1,\gamma}$ and 
$\Vert f\Vert_{C^{k+1,\gamma}}\leq C(N,R,\Vert q_v\Vert_{C^{k,\gamma}})$.
	\end{theorem}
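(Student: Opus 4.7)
The plan is to follow the Caffarelli-style improvement-of-flatness scheme, adapted to the degenerate $p$-Laplace free boundary problem as in \cite[Section 9]{DP}. The core step to establish is an iterative flatness decay: there are $\theta \in (0,1/2)$ and $\mu_0 > 0$ such that if $v$ is of class $F(\mu, 1, \infty)$ in $B_\rho(x_0)$ in direction $\nu$ with $\mu \leq \mu_0$ and $\rho \leq \kappa \mu^2$, then $v$ is of class $F(\theta \mu, 1, \infty)$ in $B_{\theta\rho}(x_0)$ in some new direction $\nu'$ with $|\nu - \nu'| \leq C\mu$. Iterating this at the dyadic scales $\theta^k \rho$ produces a Cauchy sequence of normals with modulus of continuity $O(r^\gamma)$, forcing $\partial\{v=0\}\cap B_\rho(x_0)$ to be the graph of a $C^{1,\gamma}$ function $f$ with the desired bound $\Vert f\Vert_{C^{1,\gamma}} \leq C\mu$.

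I would prove the iterative step by contradiction and blow-up. Take a sequence $v_k$ of class $F(\mu_k, 1, \infty)$ in $B_1$ with $\mu_k \to 0$ for which the flatness does not improve at some fixed scale $\theta$, and consider the normalized deviation
\[
w_k(x) := \frac{v_k(x) - q_{v_k}(0)(x\cdot\nu)_+}{\mu_k\, q_{v_k}(0)}
\]
on the positivity side $\{x\cdot\nu > -\mu_k\}\cap B_1$. A partial Harnack inequality, built by comparing $v_k$ with barriers constructed from the $p$-harmonic fundamental solution and exploiting the Lipschitz bound together with the non-degeneracy already at our disposal, yields equi-Hölder estimates for $w_k$ up to the reference hyperplane. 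Passing to a subsequence, $w_k \to w_\infty$ locally uniformly; dividing the weak form of \eqref{eqonq} for $v_k$ by $\mu_k$ and using the Lipschitz regularity of $q_{v_k}$, one shows that $w_\infty$ solves the linearized Neumann problem
\[
\begin{cases}
\mathrm{div}(A\nabla w_\infty) = 0 & \text{in } \{x\cdot\nu > 0\}\cap B_{1/2},\\
\partial_\nu w_\infty = 0 & \text{on } \{x\cdot\nu = 0\}\cap B_{1/2},
\end{cases}
\]
where $A = I + (p-2)\nu\otimes\nu$ is the linearization of the $p$-Laplacian at the one-plane solution $q_{v_k}(0)(x\cdot\nu)_+$. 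Classical $C^{1,\alpha}$ estimates for this uniformly elliptic linear problem give the quadratic decay $|w_\infty(x) - b\cdot x|\leq C|x|^2$ in $B_\theta$ for a suitable $b$; unwinding the normalization contradicts the assumed failure of improvement.

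For the higher regularity clause, once $\partial\{v=0\}\cap B_\rho(x_0)$ is a $C^{1,\gamma}$ graph I would perform a partial hodograph transform straightening the free boundary, obtaining a fixed half-ball in which $v$ satisfies a quasilinear equation (uniformly elliptic thanks to $|\nabla v|\sim q_v > 0$ on the boundary) with Dirichlet datum $0$ and an oblique condition encoding $|\nabla v|^{p-2}\partial_\nu v = -q_v$ on the straightened face. Schauder theory in the half-ball then bootstraps the regularity of $f$ from $C^{1,\gamma}$ to $C^{k+1,\gamma}$ with quantitative bound depending only on $N$, $R$ and $\Vert q_v\Vert_{C^{k,\gamma}}$.

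The main obstacle, exactly as in \cite{DP}, lies in the linearization step: one must rigorously pass to the limit in the weak equation for $v_k$ and track simultaneously the $p$-Laplace operator in the positivity set and the free boundary condition $|\nabla v|^p \approx \Lambda$. This is precisely where the previously established quantitative Lipschitz estimate, the non-degeneracy, and the uniform two-sided bound on $q_{v_j}$ enter: they guarantee that the linearized operator $\mathrm{div}(A\nabla\cdot)$ is genuinely uniformly elliptic and that the normalized defects $w_k$ are equicontinuous up to the boundary, so the limit problem captures the correct Neumann condition.
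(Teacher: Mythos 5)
The paper itself provides no proof of this theorem: it declares it a slight generalization of \cite[Theorem 9.1]{DP}, states that the proof is ``almost identical'', and also points to the Appendix of \cite{FZ}. The Danielli--Petrosyan argument follows the Alt--Caffarelli template, performing a nonhomogeneous blow-up of the normalized upper and lower envelopes of the free-boundary graph: one first shows that flatness implies a Lipschitz graph and then linearizes to obtain $C^{1,\alpha}$. Your proposal takes a genuinely different route, in the spirit of De Silva's viscosity approach: you blow up the normalized \emph{solution defect} $w_k=(v_k-q(x\cdot\nu)_+)/(\mu_k q)$ rather than the free-boundary graph, and rely on a partial Harnack inequality (equi-H\"older continuity of $w_k$ up to the flat face) plus compactness to identify the linearized transmission problem $\div\bigl((I+(p-2)\nu\otimes\nu)\nabla w_\infty\bigr)=0$ with a Neumann condition on the reference hyperplane; this operator is uniformly elliptic since its eigenvalues are $1$ (multiplicity $N-1$) and $p-1$, so interior and boundary Schauder estimates give quadratic decay of $w_\infty$, which unwinds to an improvement of flatness. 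Both routes are correct and give the same conclusion. Your route bypasses the Lipschitz-graph intermediate step and the delicate analysis of the envelope functions $f^\pm$, but the partial Harnack lemma for the $p$-Laplace one-phase problem --- the technical heart of the De Silva method, requiring barriers built from $p$-harmonic comparison functions and making essential use of the uniform Lipschitz bound, non-degeneracy, and two-sided bound on $q_{v_j}$ established earlier --- is only sketched; making it precise for general $p$ is where the real work lies. Finally, your partial hodograph transform for the $C^{k+1,\gamma}$ bootstrap (legitimate because $|\nabla v|\sim |q_v|^{1/(p-1)}>0$ on the free boundary, so the equation is uniformly elliptic there) is a standard and valid alternative to the bootstrapping used in \cite{DP}.
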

	
	\begin{proof}[Proof of Theorem \ref{pcapSelPr}]
		We define $\Omega_j$ as minimizers of \eqref{pcappertpb}. To get the desired sequence we
		will rescale the sets $\{\Omega_j\}$ so that they have the correct volume.
		We need to show that $\{\Omega_j\}$ converges smoothly to the ball $B_1$. Indeed one then define 
		\[
		U_j=\lambda_j(\Omega_j-x_{\Omega_j}).
		\]
		Lemma \ref{pcapfirstpropmin} then implies all the desired properties of \(U_j\), compare with  \cite[Proof of Proposition 4.4]{fkstab}.
		
		Let $\mu_0$, $\kappa$ be as in Theorem \ref{thm:pcapregflat} and $\mu<\mu_0$ to be fixed later. Let $\overline{x}$ be some point on the boundary of $B_1$. As $\partial B_1$ is smooth, it lies inside a narrow strip in the neighborhood of $\overline{x}$. More precisely, there exists $\rho_0=\rho_0(\mu)\leq\kappa\mu^2$ such that for every $\rho<\rho_0$ and every $\overline{x}\in\partial B_1$
		$$\partial B_1\cap B_{5\rho}(\overline{x})\subset\{x: \vert(x-\overline{x})\cdot\nu_{\overline{x}}\vert\leq\mu\rho\}.$$
		
		We know that $\partial\Omega_j$ are converging to $\partial B_1$ in the sense of Kuratowski. Thus, 
		there exists a point $x_0\in\partial\Omega_j\cap B_{\mu\rho_0}(\overline{x})$ such that
		$$\partial \Omega_j\cap B_{4\rho_0}(x_0)\subset\{x: \vert(x-x_0)\cdot\nu_{\overline{x}}\vert\leq 4\mu\rho_0\}.$$
		So, $u_j$ is of class $F(\mu,1,\infty)$ in $B_{4\rho_0}(x_0)$ with respect to the direction $\nu_{\overline{x}}$ and by Theorem \ref{thm:pcapregflat}, $\partial \Omega_j\cap B_{\rho_0}(x_0)$ is the graph of a smooth function with respect to $\nu_{\overline{x}}$. More precisely, for $\mu$ small enough there exists a family of smooth functions $g^{\overline{x}}_j$ with uniformly bounded $C^k$ norms such that
		$$\partial \Omega_j\cap B_{\rho_0}(\overline{x})=\{x+g^{\overline{x}}_j(x)x:x\in\partial B_1\}\cap B_{\rho_0}(\overline{x}).$$
		By a covering argument this gives a family of smooth functions $g_j$ with uniformly bounded $C^k$ norms such that
		$$\partial \Omega_j=\{x+g_j(x)x:x\in\partial B_1\}.$$
		By Ascoli-Arzel\`a and convergence to $\partial B_1$ in the sense of Kuratowski, we get that $g_j\rightarrow 0$ in $C^{k-1}(\partial B_1)$, hence the smooth convergence of $\partial\Omega_j$.
	\end{proof}

	\section{Reduction to  bounded sets}\label{redtobdd}
	To complete the proof of Theorem \ref{thm:main p-cap} one needs to show that one can consider only sets with uniformly bounded diameter. To this end let us introduce the following.  
		\begin{defin}
		Let $\Omega$ be an open set in $\mathds{R}^n$ with $\vert\Omega\vert=\vert B_1\vert$.
		Then we define the deficit of $\Omega$ as the difference between its $p$-capacity and the $p$-capacity of the unit ball:
		$$D(\Omega)=\Capa_p(\Omega)-\Capa_p(B_1).$$
	\end{defin}	
	
	Here is the key lemma for reducing Theorem \ref{thm:main p-cap} to Theorem \ref{pcapmainthmbdd}.	
	
	\begin{lemma}\label{reducetobdd}
		There exist constants $C=C(N)$, $\delta=\delta(N)>0$ and $d=d(N)$ such that for any $\Omega\subset\mathds{R}^n$  open with $\vert\Omega\vert=\vert B_1\vert$ and $D(\Omega)\leq\delta$,
		we can find a new set $\tilde{\Omega}$ enjoying the following properties
		\begin{enumerate}
			\item $\diam(\tilde{\Omega})\leq d$,
			\item $\vert\tilde{\Omega}\vert=\vert B_1\vert$,
			\item $D(\tilde{\Omega})\leq CD(\Omega)$,
			\item \label{boundonasym} $\mathcal{A}(\tilde{\Omega})\geq\mathcal{A}(\Omega)-CD(\Omega)$.
		\end{enumerate}
	\end{lemma}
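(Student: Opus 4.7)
The plan is to intersect $\Omega$ with a well-chosen ball of bounded radius and then rescale to restore the volume.

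\textbf{Preliminary reduction via non-sharp stability.} Since $D(\Omega)\le\delta$, the non-sharp estimate of Theorem \ref{t:fmp} yields $\mathcal{A}(\Omega)\le C D(\Omega)^{1/(p+2)}$; translating so that an optimal ball is $B_1$, we have $|\Omega\triangle B_1|$ small. In particular the ``tail mass'' $m(R):=|\Omega\setminus B_R|$ is uniformly small for every $R\ge R_0$, with $R_0$ a fixed constant depending only on $N$ and $p$.

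\textbf{Choosing the truncation radius.} The crucial step is to find $R^\star$ in a bounded interval $[R_0,R_0+1]$ such that $m(R^\star)\le C D(\Omega)$. This rests on two ingredients. First, an approximate super-additivity of $p$-capacity: for $R$ large enough so that $\Omega\cap B_R$ and $\Omega\setminus B_R$ are effectively well separated,
\begin{equation*}
\Capa_p(\Omega)\;\ge\;\Capa_p(\Omega\cap B_R)+c\,\Capa_p(\Omega\setminus B_R).
\end{equation*}
Second, the sharp isocapacitary inequality applied to the tail and to the truncated set gives
\begin{equation*}
\Capa_p(\Omega\setminus B_R)\;\ge\;c_N\, m(R)^{(N-p)/N},\qquad \Capa_p(\Omega\cap B_R)\;\ge\;\Capa_p(B_1)-C\,m(R),
\end{equation*}
the latter from a Taylor expansion of $t\mapsto t^{(N-p)/N}$ at $|B_1|$. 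Combining these,
\begin{equation*}
D(\Omega)\;\ge\;c\,m(R)^{(N-p)/N}-C\,m(R)\;\ge\;c'\,m(R)^{(N-p)/N}
\end{equation*}
for $m(R)$ small. Since $N/(N-p)>1$, this forces $m(R)\le C\,D(\Omega)^{N/(N-p)}\le C D(\Omega)$ whenever $D(\Omega)\le\delta\le1$. The super-additivity is the heart of the matter: it is obtained by a slicing argument on the capacitary potential $u$ of $\Omega$, choosing $R^\star\in[R_0,R_0+1]$ by averaging so that the contribution through $\partial B_{R^\star}$ is dominated by the annular energy $\int_{B_{R_0+1}\setminus B_{R_0}}|\nabla u|^p\,dx$.

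\textbf{Definition of $\tilde\Omega$ and verification.} Set
\begin{equation*}
\lambda:=\Bigl(\frac{|B_1|}{|\Omega\cap B_{R^\star}|}\Bigr)^{1/N},\qquad \tilde\Omega:=\lambda\,(\Omega\cap B_{R^\star}).
\end{equation*}
Then $|\tilde\Omega|=|B_1|$ and, since $\lambda-1\le C\, m(R^\star)/|B_1|\le C D(\Omega)\le C\delta$, we have $\diam\tilde\Omega\le 2\lambda(R_0+1)\le d(N)$, giving (1) and (2). By the scaling $\Capa_p(\lambda E)=\lambda^{N-p}\Capa_p(E)$ and monotonicity of $\Capa_p$,
\begin{equation*}
D(\tilde\Omega)=\lambda^{N-p}\Capa_p(\Omega\cap B_{R^\star})-\Capa_p(B_1)\le \lambda^{N-p}\Capa_p(\Omega)-\Capa_p(B_1)\le D(\Omega)+C(\lambda^{N-p}-1)\le C' D(\Omega),
\end{equation*}
establishing (3). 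Finally, $\mathcal{A}$ is scale invariant so $\mathcal{A}(\tilde\Omega)=\mathcal{A}(\Omega\cap B_{R^\star})$; comparing an optimal ball for one set with a concentric ball of the correct volume for the other (their volumes differ only by $m(R^\star)$) gives $|\mathcal{A}(\Omega\cap B_{R^\star})-\mathcal{A}(\Omega)|\le C m(R^\star)/|B_1|\le C D(\Omega)$, yielding (4).

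The main obstacle is the approximate super-additivity together with the selection of $R^\star$ for which the boundary contribution in the slicing is negligible; all other steps are elementary manipulation.
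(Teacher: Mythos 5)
Your overall outline (cut at a large radius, rescale, control volume, deficit, asymmetry) matches the paper's, and the bookkeeping in the last section (verification of properties (1)--(4) given a good truncation radius) is essentially correct. The gap is in the ``heart of the matter'' that you yourself flag: the claimed approximate super-additivity
\[
\Capa_p(\Omega)\;\ge\;\Capa_p(\Omega\cap B_R)+c\,\Capa_p(\Omega\setminus B_R)
\]
is not justified, and in fact cannot hold with a universal $c>0$ when $\Omega\cap B_R$ and $\Omega\setminus B_R$ are merely adjacent rather than well separated. The obstruction is shielding: the capacitary potential $u_S$ of $\Omega\cap B_R$ is $p$-harmonic outside and tends to $1$ as one approaches $\partial B_R$, so any mass of $\Omega$ sitting in a thin annulus $B_{R+\varepsilon}\setminus B_R$ is already ``almost at potential $1$''; adding it changes $\Capa_p$ by an amount that can be arbitrarily small compared with $\Capa_p(\Omega\setminus B_R)$. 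Your slicing/averaging remark picks a good slice for the energy $\int_{\text{annulus}}|\nabla u|^p$ of the potential of $\Omega$, but that does not translate into a quantitative lower bound on $\Capa_p(\Omega)-\Capa_p(\Omega\cap B_{R^\star})$ in terms of $\Capa_p(\Omega\setminus B_{R^\star})$, precisely because the competitor built from $u$ equals $1$ on all of $\Omega$ and gives only trivial comparisons.

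The paper circumvents this by proving a weaker, two-radius estimate (Lemma~\ref{estcap}): for $S<S'$,
\[
\Capa_p(\Omega\cap B_S)\le\Capa_p(\Omega)-c\Bigl(1-\tfrac{S}{S'}\Bigr)^p\,|\Omega\setminus B_{S'}|^{\frac{N-p}{N}},
\]
where the gap $S<S'$ is essential: it guarantees that the radial barrier $z_S$ (capacitary potential of $B_S$) is bounded away from $1$ on $B_{S'}^c$, hence the tail $\Omega\setminus B_{S'}$ is genuinely ``seen'' by the capacitary deficit. The price is that the annular mass $|\Omega\cap(B_{S'}\setminus B_S)|$ is left uncontrolled, so that a single application of the estimate gives only
\[
D(\Omega)+C\,b_S\;\ge\;c\,b_{S'}^{\frac{N-p}{N}},\qquad b_r:=\frac{|\Omega\setminus B_r|}{|B_1|},
\]
which is not enough by itself. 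The paper then runs a dyadic iteration over radii $2-2^{-k}$: either some $b_{\bar k}\le D(\Omega)$ (and one cuts there), or the recursion $b_{k+1}\le M^k b_k^{N/(N-p)}$ forces $b_k\to 0$ super-geometrically, which is used to absorb the error. Your argument has no analogue of this mechanism, so as written you cannot conclude $m(R^\star)\le CD(\Omega)$; you would only obtain a bound involving a power of $D(\Omega)$ smaller than~$1$ (coming from the non-sharp stability used to control $b_S$), which is insufficient for the sharp conclusion in items (3) and (4). To fix the proposal, you either need to prove a genuine separated super-additivity (which requires introducing a gap and paying the price of the uncontrolled annulus) or adopt the dyadic selection of the cutting radius as in the paper.

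Two minor remarks: your lower bound $\Capa_p(\Omega\cap B_R)\ge\Capa_p(B_1)-Cm(R)$ and the asymmetry comparison in the last step are correct in spirit; and note that the paper only proves the one-sided inequality $\mathcal A(\tilde\Omega)\ge\mathcal A(\Omega)-CD(\Omega)$, which is all that is needed, rather than the two-sided estimate you assert.
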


We are going to define  $\tilde{\Omega}$  as a  suitable dilation  of $\Omega\cap B_S$ for some large \(S\). Hence, we first show the following estimates on the $p$-capacity of $\Omega\cap B_S$.
 	
 	\begin{lemma} \label{estcap}
 		Let $S'>S$. Then there exists a constant $c=c(S')$ such that for any open set 	
 		$\Omega\subset\mathds{R}^N$ with $\vert\Omega\vert=\vert B_1\vert$ the following inequalities hold:
 		\begin{equation*}
 			\Capa_p(B_1)\left(1-\frac{\vert\Omega\setminus B_S\vert}{\vert B_1\vert}\right)^\frac{N-p}{N}\leq \Capa_p(\Omega\cap B_S)\leq \Capa(\Omega)-c\left(1-\frac{S}{S'}\right)^p\vert\Omega\setminus B_{S'}\vert^\frac{N-p}{N}.
 		\end{equation*}
 	\end{lemma}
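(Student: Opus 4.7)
The two inequalities are essentially independent, and I plan to handle them separately. For the \emph{lower bound}, I would set $r:=(1-|\Omega\setminus B_S|/|B_1|)^{1/N}$, so that $|B_r|=|\Omega\cap B_S|$; the classical isocapacitary inequality~\eqref{iso_p} applied to $\Omega\cap B_S$, together with the scaling $\Capa_p(B_r)=r^{N-p}\Capa_p(B_1)$, then yields the required estimate at once.

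The \emph{upper bound} is the substantive part. The plan is to compare the $p$-capacitary potentials $u$ and $v$ of $\Omega$ and $\Omega\cap B_S$ and to work with their nonnegative difference $g:=u-v$. The weak comparison principle for the $p$-Laplacian will be invoked twice: once on $(\Omega\cap B_S)^c$ to get $v\le u$ pointwise (so $g\in D^{1,p}(\mathds{R}^N)$ is nonnegative and vanishes on $\Omega\cap B_S$); and once against the explicit capacitary potential of the enclosing ball $B_S\supset \Omega\cap B_S$, namely $(S/|x|)^{(N-p)/(p-1)}$ on $B_S^c$, yielding $v(x)\le (S/|x|)^{(N-p)/(p-1)}$ for $|x|\ge S$. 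Combined with $u\equiv 1$ on $\Omega\setminus B_{S'}$, these give the pointwise lower bound
\[
g\;\ge\;1-(S/S')^{(N-p)/(p-1)}\;\ge\;c(N,p)\,(1-S/S')\qquad\text{on }\Omega\setminus B_{S'},
\]
where the second step uses the elementary estimate $1-t^q\ge \min(1,q)(1-t)$ for $t\in(0,1)$ and $q>0$, with $q=(N-p)/(p-1)$. I would then invoke the Sobolev embedding $D^{1,p}(\mathds{R}^N)\hookrightarrow L^{p^*}(\mathds{R}^N)$, $p^*=Np/(N-p)$, together with this pointwise bound, to obtain
\[
\int_{\mathds{R}^N}|\nabla g|^p\,dx\;\gtrsim\;\|g\|_{L^{p^*}}^p\;\gtrsim\;(1-S/S')^p\,|\Omega\setminus B_{S'}|^{(N-p)/N}.
\]

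Finally, since $g$ vanishes on $\Omega\cap B_S$, it is an admissible test function in the Euler-Lagrange equation for $v$, giving $\int|\nabla v|^{p-2}\nabla v\cdot\nabla g\,dx=0$. For $p\ge 2$, the standard strict-convexity inequality $|A|^p-|B|^p\ge p|B|^{p-2}B\cdot(A-B)+c_p|A-B|^p$ then produces $\Capa_p(\Omega)-\Capa_p(\Omega\cap B_S)\ge c_p\int|\nabla g|^p\,dx$, which together with the previous display closes the argument. \emph{The hard part} will be the case $1<p<2$, where uniform convexity of the $p$-Dirichlet energy degenerates and one must use the weighted form $\int(|\nabla u|^p-|\nabla v|^p)\,dx\ge c_p\int|\nabla g|^2(|\nabla u|+|\nabla v|)^{p-2}\,dx$, recovering the $L^p$ norm of $\nabla g$ by H\"older's inequality at the cost of a factor $\lesssim \Capa_p(\Omega)^{(2-p)/p}$. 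I plan to absorb this factor by a dichotomy: when $\Capa_p(\Omega)\le M(N,p,S')$ the argument closes as above, and otherwise the claim follows at once from the trivial monotonicity bound $\Capa_p(\Omega\cap B_S)\le \Capa_p(B_S)\le C(S')$, since the gap $\Capa_p(\Omega)-\Capa_p(\Omega\cap B_S)$ in this regime already exceeds the (universally bounded) right-hand side of the lemma.
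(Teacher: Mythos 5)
Your proposal is correct and follows the same overall strategy as the paper: the lower bound is the isocapacitary inequality plus scaling, and for the upper bound one compares the capacitary potentials $u=u_\Omega$ and $v=u_{\Omega\cap B_S}$, uses (uniform) convexity of $t\mapsto|t|^p$ together with the variational characterization of $v$, lower-bounds the resulting gradient term via Sobolev embedding, and finishes with the explicit potential of $B_S$ to get a pointwise lower bound on $\Omega\setminus B_{S'}$. The paper, incidentally, is deliberately cautious and establishes only the one-sided inequality $\int|\nabla v|^{p-2}\nabla v\cdot\nabla(u-v)\ge 0$ by a one-parameter competitor argument rather than asserting exact orthogonality as you do; both versions serve the purpose, since only the sign matters.

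The genuine difference is in the treatment of $1<p<2$. You recover the $L^p$ norm of $\nabla g$ from the degenerate-weight quantity via H\"older, which introduces a factor $\lesssim \Capa_p(\Omega)^{(2-p)/p}$, and you dispose of it by the dichotomy ``$\Capa_p(\Omega)$ bounded vs.\ large''. That works, but the paper avoids both the H\"older step and the dichotomy entirely by the following observation: it suffices to restrict the convexity correction term to $\Omega\setminus B_S$, where $\nabla u_\Omega\equiv 0$ because $u_\Omega\equiv 1$ on $\Omega$. There the weighted quantity collapses identically,
\[
\bigl(|\nabla u_\Omega|^2+|\nabla u_S|^2\bigr)^{\frac{p-2}{2}}\,|\nabla(u_\Omega-u_S)|^2
= |\nabla u_S|^{p-2}\,|\nabla u_S|^2 = |\nabla u_S|^p ,
\]
so one gets $\int_{\Omega\setminus B_S}|\nabla u_S|^p$ directly, with no loss and no case split, and then runs Sobolev on $v_S=1-u_S$ exactly as you do on $g$ (on $\Omega$ these two functions coincide). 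Your dichotomy is a fine workaround, but the restriction to $\Omega\setminus B_S$ is the cleaner move and is worth internalizing: the same trick is what makes the $p\ge 2$ and $1<p<2$ cases structurally identical in the paper.
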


 	\begin{proof}[{Proof of Lemma \ref{estcap}}]
 		The first inequality is a direct consequence of the classical isocapacitary inequality. To prove the second one we are going to use the estimates for the capacitary potential of $B_S$ for which the exact formula can be written. Denote by $u_{\Omega}$ and $u_S$ the capacitary potentials of $\Omega$ and $\Omega\cap B_S$ respectively. We first write 		
 		\begin{equation*}
			\Capa_p(\Omega\cap B_S)=\Capa_p(\Omega)+\int_{\mathds{R}^n}{\vert\nabla u_S\vert^p-\vert\nabla u_\Omega\vert^p}
			=\Capa_p(\Omega)-\int_{(\Omega\cap B_S)^c}{\left(\vert\nabla u_\Omega\vert^p-\vert\nabla u_S\vert^p\right)}.
		\end{equation*}	
		Let us show that
		\begin{equation}\label{e:bound from below for difference of capacities}
		\int_{(\Omega\cap B_S)^c}{\vert\nabla u_\Omega\vert^p-\vert\nabla u_S\vert^p}\geq c(p)\int_{\Omega\backslash B_S}\vert\nabla u_S\vert^p.		
		\end{equation}
		We first claim that
		\begin{equation}\label{e:first derivative of p power weak}
		\int_{(\Omega\cap B_S)^c}\vert \nabla u_S\vert^{p-2}\nabla u_S\cdot\nabla (u_\Omega-u_S)\geq 0.
		\end{equation}
		Note that \eqref{e:first derivative of p power weak} becomes equality for a smooth domain $\Omega$. We need to be more careful for an arbitrary domain however. Since $\Omega\cap B_S\subset\Omega$, for any $\varepsilon\geq 0$ the function $u_S^\varepsilon:=(1-\varepsilon)u_S+\varepsilon u_\Omega$ is a competitor for the $p$-capacity of $\Omega\cap B_S$. Thus,
		\begin{equation*}
		\begin{split}
			0\geq\int_{(\Omega\cap B_S)^c}\vert \nabla u_S\vert^{p}-\int_{(\Omega\cap B_S)^c}\vert \nabla u^\varepsilon_S\vert^{p}\geq p\int_{(\Omega\cap B_S)^c}\vert \nabla u^\varepsilon_S\vert^{p-2}\nabla u^\varepsilon_S\cdot\nabla (u_S-u^\varepsilon_S)\\
			=p\varepsilon\int_{(\Omega\cap B_S)^c}\vert (1-\varepsilon)\nabla u_S+\varepsilon\nabla u_\Omega\vert^{p-2}\left( (1-\varepsilon)\nabla u_S+\varepsilon\nabla u_\Omega\right)\cdot\nabla (u_S-u_\Omega).
		\end{split}			 		
		\end{equation*}
		We consider separately two cases. If $1<p<2$, by concavity of $t\rightarrow t^{p-2}$ we have		
		\begin{equation*}
		\begin{split}
			0&\geq\int_{(\Omega\cap B_S)^c}\vert (1-\varepsilon)\nabla u_S+\varepsilon\nabla u_\Omega\vert^{p-2}\left( (1-\varepsilon)\nabla u_S+\varepsilon\nabla u_\Omega\right)\cdot\nabla (u_S-u_\Omega)\\
			&\geq\int_{(\Omega\cap B_S)^c}\left((1-\varepsilon)\vert \nabla u_S\vert^{p-2}+\varepsilon\vert \nabla u_\Omega\vert^{p-2}\right)\left( (1-\varepsilon)\nabla u_S+\varepsilon\nabla u_\Omega\right)\cdot\nabla (u_S-u_\Omega)\\
			&=(1-\varepsilon)^2\int_{(\Omega\cap B_S)^c}\vert \nabla u_S\vert^{p-2}\nabla u_S\cdot\nabla (u_S-u_\Omega)\\
			&+\varepsilon(1-\varepsilon)\int_{(\Omega\cap B_S)^c}\vert \nabla u_\Omega\vert^{p-2}\nabla u_S\cdot\nabla (u_S-u_\Omega)
			+\varepsilon^2\int_{(\Omega\cap B_S)^c}\vert \nabla u_\Omega\vert^{p-2}\nabla u_\Omega\cdot\nabla (u_S-u_\Omega)\\
			&+\varepsilon^2\int_{(\Omega\cap B_S)^c}\vert \nabla u_S\vert^{p-2}\nabla u_\Omega\cdot\nabla (u_S-u_\Omega).
		\end{split}			 		
		\end{equation*}
		We know that the integrals in the last equation are finite since both $u_\Omega$ and $u_S$ are in $D^{1,p}$. Thus,
		sending $\varepsilon$ to $0$ we get the inequality \eqref{e:first derivative of p power weak} in the case
		$1<p<2$. As for the case $p\geq 2$, we use that $\vert a+b\vert^{p-2}\geq c(p)\vert a\vert^{p-2}-C(p)\vert b\vert^{p-2}$ for some positive constants $c$ and $C$ and hence		
		\begin{equation*}
		\begin{split}
			0&\geq\int_{(\Omega\cap B_S)^c}\vert (1-\varepsilon)\nabla u_S+\varepsilon\nabla u_\Omega\vert^{p-2}\left( (1-\varepsilon)\nabla u_S+\varepsilon\nabla u_\Omega\right)\cdot\nabla (u_S-u_\Omega)\\
			&\geq\int_{(\Omega\cap B_S)^c}\left(c(p)(1-\varepsilon)^{p-2}\vert \nabla u_S\vert^{p-2}-C(p)\varepsilon^{p-2}\vert \nabla u_\Omega\vert^{p-2}\right)\left( (1-\varepsilon)\nabla u_S+\varepsilon\nabla u_\Omega\right)\cdot\nabla (u_S-u_\Omega).
		\end{split}			 		
		\end{equation*}
		As for the previous case, we can now send $\varepsilon$ to $0$ and get the inequality \eqref{e:first derivative of p power weak} for $p\geq 2$.
		
		\medskip
		\noindent
		Now we are ready to prove \eqref{e:bound from below for difference of capacities}. We consider two cases: $p\geq 2$ and $1<p<2$. For both we will be using an inequality of Lemma \ref{l:ineq for difference of powers}.
		For $p\geq 2$ we have
		 \begin{equation*}
 		\begin{split}
		&\int_{(\Omega\cap B_S)^c}{\vert\nabla u_\Omega\vert^p-\vert\nabla u_S\vert^p}\geq 
		c(p)\int_{(\Omega\cap B_S)^c}\vert\nabla(u_\Omega-u_S)\vert^p\\
		&\qquad+p\int_{(\Omega\cap B_S)^c}\vert \nabla u_S\vert^{p-2}\nabla u_S\cdot\nabla (u_\Omega-u_S)\\
		&\qquad\geq c(p)\int_{(\Omega\cap B_S)^c}\vert\nabla(u_\Omega-u_S)\vert^p
		\geq c(p)\int_{\Omega\backslash B_S}\vert\nabla(u_\Omega-u_S)\vert^p\\
		&\qquad=c(p)\int_{\Omega\backslash B_S}\vert\nabla u_S\vert^p,		
		\end{split}	
		\end{equation*}	
		where for the second inequality we used \eqref{e:first derivative of p power weak}.
		As for the case $1<p<2$, we have
 		\begin{equation*}
 		\begin{split}
		\int_{(\Omega\cap B_S)^c}{\vert\nabla u_\Omega\vert^p-\vert\nabla u_S\vert^p}\geq 
		c(p)\int_{(\Omega\cap B_S)^c}\left(\vert\nabla u_\Omega\vert^2+\vert\nabla u_S\vert^2\right)^{\frac{p-2}{2}}\vert\nabla(u_\Omega-u_S)\vert^2\\
		+p\int_{(\Omega\cap B_S)^c}\vert \nabla u_S\vert^{p-2}\nabla u_S\cdot\nabla (u_\Omega-u_S)\\
			\geq c(p)\int_{(\Omega\cap B_S)^c}\left(\vert\nabla u_\Omega\vert^2+\vert\nabla u_S\vert^2\right)^{\frac{p-2}{2}}\vert\nabla(u_\Omega-u_S)\vert^2\\
			\geq c(p)\int_{\Omega\backslash B_S}\left(\vert\nabla u_\Omega\vert^2+\vert\nabla u_S\vert^2\right)^{\frac{p-2}{2}}\vert\nabla(u_\Omega-u_S)\vert^2\\
		=c(p)\int_{\Omega\backslash B_S}\vert\nabla u_S\vert^p,		
		\end{split}	
		\end{equation*}	
		where for the last equality we used that $u_\Omega\equiv 1$ in $\Omega$.

		\medskip
		\noindent
		We would like to show that $\int_{\Omega\backslash B_S}\vert\nabla u_S\vert^p$ cannot be too small. To this end let us set  $v_S=1-u_S$. By  Sobolev's embedding  we get
	\begin{equation*}
		\int_{\Omega\backslash B_S}\vert\nabla u_S\vert^p=\int_{\Omega\backslash B_S}\vert\nabla v_S\vert^p
		\geq c(N)\left(\int_{\Omega\setminus B_S}\vert v_S\vert^{p^*}\right)^\frac{p}{p^*}, 
	\end{equation*}
	where \(p^*\) is the Sobolev exponent. Let us denote
	by $z_S$ the capacitary potential of $B_S$: 
	\[
	z_S=\Biggl(1-\frac{S^{\frac{n-p}{p-1}}}{|x|^{\frac{n-p}{p-1}}}\Biggr)_+.
	\]
	By the maximum principle, $v_S\geq z_S$, hence
	\begin{equation*}
	\begin{split}
		\int_{\Omega\setminus B_S}\vert v_S\vert^{p^*}&\geq \int_{\Omega\backslash B_S}\vert z_{S}\vert^{p^*}\\
		&\ge \int_{\Omega\backslash B_{S'}}\vert z_{S}\vert^{p^*} \geq \left(1-\left(\frac{S}{S'}\right)^{\frac{n-p}{p-1}}\right)^{p^*}\vert\Omega\setminus B_{S'}\vert.
		\end{split}
	\end{equation*}	
Hence
	\begin{equation*}
		\begin{aligned}
		\Capa_p(\Omega\cap B_S)&\leq \Capa_p(\Omega)-c(N)\left(1-\left(\frac{S}{S'}\right)^{\frac{n-p}{p-1}}\right)^p\vert\Omega\setminus B_{S'}\vert^\frac{N-p}{N}\\
		&\leq \Capa_p(\Omega)-c\left(1-\frac{S}{S'}\right)^p\vert\Omega\setminus B_{S'}\vert^\frac{N-p}{N},
		\end{aligned}	
	\end{equation*}
	concluding the proof.
 	\end{proof}
	
 We can now prove Lemma \ref{reducetobdd}. 
	\begin{proof} [Proof of Lemma \ref{reducetobdd}]
	The proof is almost identical to the proof of \cite[Lemma 6.2]{DPMM}. We repeat it here for convenience of the reader.
	
	Let us  assume without loss of generality that the ball achieving the asymmetry of $\Omega$ is $B_1$.  As was already mentioned, we are going to show that there exists an $\tilde{\Omega}$ of the form $\lambda (\Omega\cap B_S)$  for  suitable \(S\) and \(\lambda\) satisfying all the desired properties. Let us set  
	\[
	b_k:=\frac{\vert\Omega\backslash B_{2-2^k}\vert}{\vert B_1\vert}\le 1.
	\]
	Note that by Theorem \ref{t:fmp} we can assume that \(b_1\le 2\mathcal A(\Omega)\) is as small as we wish (independently on \(\Omega\) up to choose \(\delta\) sufficiently small. Lemma \ref{estcap} gives 
	\begin{equation*}
		\begin{aligned}
		\Capa_p(\Omega)&-c\left(\frac{2^{-(k+1)}}{2-2^{-(k+1)}}\right)^p b_{k+1}^\frac{N-2}{N}\geq \Capa_p(B_1)(1-b_k)^\frac{N-p}{N}\geq \Capa_p(B_1)-\Capa_p(B_1)b_k,
		\end{aligned}
	\end{equation*}
	which implies
	\begin{equation} \label{estonbk}
		c b_{k+1}\leq \left(4^{N/(N-p)}\right)^k (D(\Omega)+C b_k)^\frac{N}{N-p}.
	\end{equation}
We now claim that there exists \(\bar k\) such that 
\[
b_{\bar k}\le D(\Omega).
\]
Indeed, otherwise   by \eqref{estonbk} we would get
	\begin{equation*}
		b_{k+1}\leq C \left(4^{N/(N-p)}\right)^k (D(\Omega)+C b_k)^\frac{N}{N-p}\leq \left(4^{N/(N-p)}\right)^k C' b_k^\frac{N}{N-p}
		\leq M^k b_k^\frac{N}{N-p}
	\end{equation*}
	for all \(k\in \mathds N\), where   $M=M(N)$. Iterating the last inequality, we obtain
	\begin{equation*}
		b_{k+1}\leq (M b_1)^{(\frac{N}{N-p})^k}\xrightarrow[k\to\infty]{} 0
	\end{equation*}
	if $b_1$ is small enough, which by Theorem \ref{t:fmp}  we can assume up to choose \(\delta=\delta(N)\ll1\).

	We  define $\tilde{\Omega}$ as a properly rescaled intersection of $\Omega$ with a ball. Let $\bar k $ be such that $b_{\bar k} \le D(\Omega)$
	\begin{equation*}
		\tilde{\Omega}:=\left(\frac{\vert B_1\vert}{\vert\Omega\cap  B_R\vert}\right)^\frac{1}{N}(\Omega\cap  B_R)=(1-b_{\bar k})^{-\frac{1}{N}}(\Omega\cap  B_S),
	\end{equation*}
	where $S:=2-2^{-\bar k}\le 2$. Note that \(|\tilde \Omega|=|B_1|\). We now check all the remaining properties:
	\begin{itemize}
	\item[-] \emph{Bound on the diameter}:
	\begin{equation*}
		\diam(\tilde{\Omega})\leq 2\cdot 2 (1-D(\Omega))^{-\frac{1}{N}}\leq 4(1-\delta)^{-\frac{1}{N}}\le 4.
	\end{equation*}
	up to choose \(\delta=\delta(N)\ll1\).
	\item[-]  \emph {Bound on the deficit}:
	\begin{equation*}
	\begin{aligned}
		D(\tilde{\Omega})&=\Capa_p(\tilde{\Omega})-\Capa_p(B_1)=\Capa_p(\Omega\cap B_S)(1-b_{\bar K})^{-\frac{N-p}{N}}-\Capa_p(B_1)\\
		&\leq \Capa_p(\Omega)(1-b_{\bar k})^{-\frac{N-p}{N}}-\Capa_p(B_1)\\
		&\le  \Capa_p(\Omega)-\Capa_p(B_1)+\frac{2(N-p)\Capa_p(\Omega)}{N}b_{\bar k}\le C(N)D(\Omega).
	\end{aligned}	
	\end{equation*}
	since \(b_{\bar k} \le D(\Omega)\ll 1\) and, in particular, \(\Capa_p (\Omega)\le 2 \Capa_p(B_1)\).
	\item[-]\emph{Bound on  the asymmetry}: Let $r:=(1-b_{\bar k})^{-1} \in (1,2)$, that is $r$ is such that $\tilde{\Omega}=r^N(\Omega \cap B_S)$ with \(S=2-2^{-\bar k}\le 2\). Let $x_0$ be such that $B_1(x_0)$ is a minimizing ball for $\mathcal{A}(\tilde{\Omega})$.  Then,  recalling that \(b_{\bar k}=|B_1|^{-1}|\Omega\setminus B_S|\le C(N) D(\Omega)\),
	\begin{equation*}
	\begin{aligned}
		|B_1|\mathcal{A}(\Omega)&\leq\vert\Omega\Delta B_1\left(\frac{x_0}{r}\right)\vert\leq\vert\Omega\setminus B_S\vert+\left\vert(\Omega\cap B_S)\Delta B_1\left(\frac{x_0}{r}\right)\right\vert\\
		&\leq C D(\Omega) +\left\vert(\Omega\cap B_S)\Delta B_\frac{1}{r}\left(\frac{x_0}{r}\right)\right\vert\\
		&\quad+\left\vert B_\frac{1}{r}\left(\frac{x_0}{r}\right)\Delta B_1\left(\frac{x_0}{r}\right)\right\vert\\
		&\le CD(\Omega)+\frac{|B_1|}{r^N}\mathcal{A}(\tilde{\Omega})+\vert B_1\vert\left(1-\frac{1}{r^N}\right)\\
		&\leq CD(\Omega)+|B_1|\mathcal{A}(\tilde{\Omega})+C(N)  b_{\bar k}\\
		&\leq CD(\Omega)+|B_1|\mathcal{A}(\tilde{\Omega}).
	\end{aligned}
	\end{equation*}
	
	\end{itemize}
	\end{proof} 
	
	\section{Proof of Theorem \ref{thm:main p-cap}}\label{s:proof}
 In order to reduce it to Theorem \ref{pcapmainthmbdd}, we need to start with a set which is already close to a ball. Thanks to Theorem  \ref{t:fmp},  this can be achieved by assuming the deficit sufficiently small (the quantitative inequality being trivial in the other regime).
 		
We have now all the ingredients  to prove  Theorem \ref{thm:main p-cap}.	
	
 	\begin{proof}[Proof of Theorem \ref{thm:main p-cap}]
 	First note that if \(D(\Omega)\ge \delta_0\) then, since \(\mathcal A(\Omega)\ge 2\),
		\[
		D(\Omega)\ge 4\frac{\delta_0}{4}\ge \frac{\delta_0}{4}\mathcal A(\Omega)^2.
		\]
		Hence we can assume that \(D(\Omega)\) is as small as we wish as long as the smallness depends only on \(N\). We now take \(\delta_0\) smaller than the constant \(\delta\) in Lemma \ref{reducetobdd} and, assuming that \(D(\Omega)\le \delta_0\), we use Lemma \ref{reducetobdd} to find a set  $\tilde{\Omega}$ with \(\diam(\tilde \Omega)\le d=d(N)\) and satisfying all the properties there. In particular, up to a translation we can assume that \(\tilde \Omega\subset B_d\). Up to choosing  \(\delta_0\) smaller we can apply  Theorem \ref{t:fmp} and Lemma \ref{propasym} \ref{asymlip} to ensure that \(\alpha(\tilde \Omega)\le \varepsilon_0\) where \(\varepsilon_0=\varepsilon_0(N,d)=\varepsilon_0(N)$ is the constant appearing in the statement of  Theorem \ref{pcapmainthmbdd}. This, together with Lemma \ref{propasym}, \ref{compasym},  grants that 
		\[
		D(\tilde \Omega)\ge c(N) \alpha(\tilde \Omega)\ge c(N)\mathcal A(\tilde \Omega)^2.
		\]
		Hence, by Lemma \ref{reducetobdd} and assuming that \(\mathcal A(\Omega)\ge C D(\Omega)\) (since otherwise there is nothing to prove),
		\[
		D(\Omega)\ge cD(\tilde \Omega)\ge c A(\tilde \Omega)^2\ge c \mathcal A(\tilde \Omega)^2\ge c \mathcal A(\Omega)^2-CD(\Omega)^2
		\]
		from which the conclusion easily follows since \(D(\Omega)\le \delta_0\ll 1\).
 	\end{proof}

\appendix
\section{}	

		Here we put some inequalities that are used throughout the paper.
		
		\begin{lemma}[{\cite[Lemma 2.3]{FZ}}]\label{l:inequality for L^p norm of a difference}
		Let $p>1$. There exists $c(p)\geq 0$ such that if $\kappa\geq 0$ and $\xi,\eta\in\mathds{R}^n$ then
		\begin{equation*}
		\left(\left(\kappa^2+\vert\xi\vert^2\right)^{\frac{p-2}{2}}\xi-\left(\kappa^2+\vert\eta\vert^2\right)^{\frac{p-2}{2}}\eta\right)
		\cdot\left(\xi-\eta\right)
		\geq c\left(\kappa^2+\vert\xi\vert^2+\vert\eta\vert^2\right)^{\frac{p-2}{2}}\vert\xi-\eta\vert^2.
		\end{equation*}
		Moreover, there exists another constant $C(p)\geq 0$ such that
		if $\Omega\subset\mathds{R}^n$ is an open set and for 
		$u,\, v\in W^{1,p}(\Omega)$ and $0\leq s\leq 1$, we set
		$u^s(x)=su(x)+(1-s)v(x)$,
		then the following two inequalities hold:
		\begin{itemize}
		\item for $p\geq 2$
		\begin{equation}\label{e:inequality for L^p norm for p>=2}
			\int_{\Omega}{\vert\nabla u-\nabla v\vert^p}
			\leq C\int_0^1\frac{1}{s}\,ds{\int_{\Omega}\left(\vert\nabla u^s\vert^{p-2}\nabla u^s-\vert\nabla v\vert^{p-2}\nabla v\right)\cdot\nabla(u^s-v)};
		\end{equation}
		\item for $1<p<2$
		\begin{equation}\label{e:inequality for L^p norm for p<2}
		\begin{split}
			&\int_{\Omega}{\vert\nabla u-\nabla v\vert^p}\\
			&\leq C\left(\int_0^1\frac{1}{s}\,ds{\int_{\Omega}\left(\vert\nabla u^s\vert^{p-2}\nabla u^s-\vert\nabla v\vert^{p-2}\nabla v\right)\cdot\nabla(u^s-v)}\right)^\frac{p}{2}\left(\int_{\Omega}{\left(\vert\nabla u\vert+\vert\nabla v\vert\right)^p}\right)^{1-\frac{p}{2}}.
		\end{split}
		\end{equation}
		\end{itemize}
		\end{lemma}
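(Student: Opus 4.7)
The plan is to first establish the pointwise vectorial monotonicity inequality and then derive the two integral bounds from it by H\"older-type manipulations.

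For the first (pointwise) inequality, I would set $F_\kappa(\xi):=(\kappa^2+|\xi|^2)^{(p-2)/2}\xi$ and apply the fundamental theorem of calculus along the segment from $\eta$ to $\xi$:
\[
(F_\kappa(\xi) - F_\kappa(\eta))\cdot(\xi - \eta) = \int_0^1 DF_\kappa(\zeta_t)(\xi-\eta)\cdot(\xi-\eta)\,dt,\qquad \zeta_t := \eta + t(\xi-\eta).
\]
A direct computation gives
\[
DF_\kappa(\zeta)w\cdot w = (\kappa^2+|\zeta|^2)^{(p-2)/2}|w|^2 + (p-2)(\kappa^2+|\zeta|^2)^{(p-4)/2}(\zeta\cdot w)^2,
\]
and splitting $p\ge 2$ (where the correction is nonnegative) from $1<p<2$ (where one estimates $(\zeta\cdot w)^2\le|\zeta|^2|w|^2\le(\kappa^2+|\zeta|^2)|w|^2$) yields $DF_\kappa(\zeta)w\cdot w \ge \min(1,p-1)(\kappa^2+|\zeta|^2)^{(p-2)/2}|w|^2$. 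The remaining task is to bound $\int_0^1 (\kappa^2+|\zeta_t|^2)^{(p-2)/2}\,dt$ below by $c(p)(\kappa^2+|\xi|^2+|\eta|^2)^{(p-2)/2}$: for $1<p<2$ this holds pointwise in $t$ since $|\zeta_t|^2\le 2(|\xi|^2+|\eta|^2)$ and the exponent is negative, whereas for $p\ge 2$ one exploits that at least one endpoint satisfies $\kappa^2+|\zeta_i|^2 \ge \tfrac12(\kappa^2+|\xi|^2+|\eta|^2)$ together with continuity on a suitable subinterval (the delicate configuration being $\eta=-\xi$, $\kappa=0$, which can be handled by direct calculation).

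For \eqref{e:inequality for L^p norm for p>=2}, I would apply the pointwise bound with $\kappa=0$, $\xi=\nabla u^s(x)$, $\eta=\nabla v(x)$. Since $u^s - v = s(u-v)$, we have $\nabla u^s - \nabla v = s(\nabla u - \nabla v)$, so pointwise
\[
(|\nabla u^s|^{p-2}\nabla u^s - |\nabla v|^{p-2}\nabla v)\cdot \nabla(u^s - v) \ge c\,s^2(|\nabla u^s|^2 + |\nabla v|^2)^{(p-2)/2}|\nabla u - \nabla v|^2.
\]
Then $|\nabla u^s|^2 + |\nabla v|^2 \ge \tfrac{1}{2}|\nabla u^s - \nabla v|^2 = \tfrac{s^2}{2}|\nabla u - \nabla v|^2$ together with $(p-2)/2\ge 0$ gives a lower bound of order $s^p|\nabla u - \nabla v|^p$; dividing by $s$ and integrating over $s\in(0,1)$ and $x\in\Omega$ produces $\int_0^1 s^{p-1}\,ds\cdot\int_\Omega |\nabla u - \nabla v|^p$, which is the desired estimate.

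For \eqref{e:inequality for L^p norm for p<2}, since $(p-2)/2<0$ and $|\nabla u^s|\le |\nabla u|+|\nabla v|$, the pointwise bound instead yields $(|\nabla u^s|^2+|\nabla v|^2)^{(p-2)/2}\ge c(|\nabla u|+|\nabla v|)^{p-2}$, which after the same integration gives
\[
\int_0^1 \frac{1}{s}\,ds\int_\Omega (|\nabla u^s|^{p-2}\nabla u^s - |\nabla v|^{p-2}\nabla v)\cdot \nabla(u^s - v) \ge c\int_\Omega (|\nabla u|+|\nabla v|)^{p-2}|\nabla u - \nabla v|^2.
\]
Writing $|\nabla u - \nabla v|^p = \bigl[|\nabla u - \nabla v|^2(|\nabla u|+|\nabla v|)^{p-2}\bigr]^{p/2}\bigl[(|\nabla u|+|\nabla v|)^p\bigr]^{(2-p)/2}$ and applying H\"older's inequality with exponents $2/p$ and $2/(2-p)$ then yields \eqref{e:inequality for L^p norm for p<2}. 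The main obstacle throughout is securing the pointwise monotonicity inequality with the right weight in the degenerate regime (the subquadratic case, where $(\kappa^2+|\zeta|^2)^{(p-2)/2}$ blows up at $\zeta=0$, and the near-origin segment configuration for $p>2$); once that is in hand, the passage from pointwise to integral form is routine.
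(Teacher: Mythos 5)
Your proof is correct. Note that the paper does not prove this lemma itself --- it is imported as \cite[Lemma 2.3]{FZ} without proof --- so there is no in-paper argument to compare against. Your route is the standard one: the pointwise bound via the fundamental theorem of calculus along the segment $\zeta_t=\eta+t(\xi-\eta)$, with $DF_\kappa(\zeta)w\cdot w\ge\min(1,p-1)(\kappa^2+|\zeta|^2)^{(p-2)/2}|w|^2$, and then the estimate $\int_0^1(\kappa^2+|\zeta_t|^2)^{(p-2)/2}dt\gtrsim(\kappa^2+|\xi|^2+|\eta|^2)^{(p-2)/2}$. The only step you leave slightly informal is the $p\ge 2$ case of that integral bound; it closes cleanly by observing that if $|\xi|\ge|\eta|$ (WLOG) then for $t\in[3/4,1]$ one has $|\zeta_t|\ge(2t-1)|\xi|\ge\tfrac12|\xi|$, so on a subinterval of fixed length the integrand is comparable to $(\kappa^2+|\xi|^2)^{(p-2)/2}\ge 2^{(2-p)/2}(\kappa^2+|\xi|^2+|\eta|^2)^{(p-2)/2}$; this also disposes of your ``delicate'' configuration $\eta=-\xi$, $\kappa=0$. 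The passage to the two integral inequalities, using $\nabla(u^s-v)=s(\nabla u-\nabla v)$, the elementary bounds $|\nabla u^s|^2+|\nabla v|^2\ge\tfrac12 s^2|\nabla u-\nabla v|^2$ (for $p\ge 2$) and $|\nabla u^s|^2+|\nabla v|^2\le 2(|\nabla u|+|\nabla v|)^2$ (for $1<p<2$), and H\"older with exponents $2/p$, $2/(2-p)$ in the subquadratic case, is exactly as it should be.
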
	

 	\begin{lemma}\label{l:ineq for difference of powers}
 		Let $x,y\in\mathds{R}^N$, $p\in(1,\infty)$. Then the following
 		inequalities hold:
 		\begin{itemize}
 			\item if $p\geq 2$, then
 			\begin{equation*}
 				\vert y\vert^p\geq \vert x\vert^p+p\vert x\vert^{p-2}x\cdot(y-x)+c\vert y-x\vert^p 
 			\end{equation*}
 			for some $c=c(p)>0$;
 			\item if $1<p<2$, then
 			\begin{equation*}
 				\vert y\vert^p\geq \vert x\vert^p+p\vert x\vert^{p-2}x\cdot(y-x)+c\vert y-x\vert^2\left(\vert x\vert^2+\vert y-x\vert^2\right)^{\frac{p-2}{2}} 
 			\end{equation*}
 			for some $c=c(p)>0$.
 		\end{itemize}
 	\end{lemma}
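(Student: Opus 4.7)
The strategy is to derive both inequalities from the second-order Taylor expansion of the strictly convex function $f(z) = |z|^p$. Setting $z_t := x + t(y-x)$, Taylor's theorem with integral remainder gives
\[
F(x,y) := |y|^p - |x|^p - p|x|^{p-2} x\cdot(y-x) = \int_0^1 (1-t)\, D^2 f(z_t)[y-x,\, y-x]\, dt,
\]
and an explicit computation yields
\[
D^2 f(z)[v,v] = p|z|^{p-2}|v|^2 + p(p-2)|z|^{p-4}(z\cdot v)^2.
\]
For $p \geq 2$ both summands on the right are non-negative, so $D^2 f(z)[v,v] \geq p|z|^{p-2}|v|^2$; for $1 < p < 2$ the second summand is negative, but Cauchy--Schwarz $(z\cdot v)^2 \leq |z|^2|v|^2$ yields the weaker bound $D^2 f(z)[v,v] \geq p(p-1)|z|^{p-2}|v|^2$.

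In either case, the lemma reduces to proving a lower bound on the scalar integral
\[
I(x,y) := \int_0^1 (1-t)\, |z_t|^{p-2}\, dt,
\]
of the form $I(x,y) \geq c(p)|y-x|^{p-2}$ when $p \geq 2$, and $I(x,y) \geq c(p)(|x|^2+|y-x|^2)^{(p-2)/2}$ when $1 < p < 2$; multiplication by $|y-x|^2$ then gives the two displayed inequalities. For $p \geq 2$, I would split into cases $|y-x|\leq|x|$ and $|y-x|>|x|$: in the first, $|z_t|\geq|x|/2$ on $[0,1/2]$ and the bound follows because $|x|\geq|y-x|$ and $p-2\geq 0$; in the second, one identifies a subinterval of $[3/4,1]$ of positive measure on which $|z_t|\geq c|y-x|$ uniformly in the angle between $x$ and $y-x$. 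For $1 < p < 2$, both sides of the desired inequality are homogeneous of degree $p$ in $(x,y)$, so one may normalize $|x|^2+|y-x|^2=1$; combined with rotation invariance, this reduces the problem to a two-parameter compact family. On that family $F(x,y) > 0$ except where $y = x$ (by strict convexity of $|z|^p$), and Taylor's theorem shows $F(x,y) \gtrsim |y-x|^2$ as $y\to x$, so a standard compactness argument gives the required uniform lower bound.

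The main obstacle is the case $1 < p < 2$: the Hessian degenerates in the sense that $|z|^{p-2}$ can blow up whenever the segment $[x,y]$ passes close to the origin, so a direct pointwise estimate of the integrand fails. One might try to handle this via a careful geometric case analysis, but the scaling-and-compactness reduction above is the cleanest route, since the right-hand side of the inequality is designed precisely to capture the degeneracy of $|z|^{p-2}$ along the segment. Once this reduction is in place, the two regimes $p \geq 2$ and $1 < p < 2$ are handled uniformly and the proof concludes.
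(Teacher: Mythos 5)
Your proposal is correct and follows the paper's overall strategy: Taylor's theorem with integral remainder for $f(z)=|z|^p$, the pointwise Hessian bound $D^2f(z)[v,v]\geq c(p)\,|z|^{p-2}|v|^2$, and then a lower bound on the scalar integral $\int_0^1(1-t)|x+t(y-x)|^{p-2}\,dt$. Where you diverge is in the two estimates of that scalar integral.

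For $p\geq 2$ the paper splits at $|y-x|=2|x|$ rather than at $|y-x|=|x|$ as you do, and this is not cosmetic. With the paper's split, in the regime $|y-x|\geq 2|x|$ the point $t^*$ of closest approach of $x+t(y-x)$ to the origin satisfies $|t^*|\leq |x|/|y-x|\leq 1/2$, so the fixed interval $[4/7,6/7]$ is uniformly separated from $t^*$ and from both endpoints, giving $|z_t|\geq|y-x|/14$ with the weight $1-t$ bounded below. With your split at $|y-x|=|x|$, in the second case $t^*$ can be arbitrarily close to $1$ (take $x$ antiparallel to $y-x$ with $|x|\nearrow|y-x|$), so there is no fixed subinterval of $[3/4,1]$ on which $|z_t|\geq c|y-x|$; the ``good'' subinterval drifts toward $t=1$, where the weight $1-t$ vanishes. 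The estimate can still be salvaged by a sub-case analysis on the position of $t^*$, but it requires bookkeeping that a split at $2|x|$ avoids.

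For $1<p<2$ you argue by scaling and compactness. This is sound (the quantity is degree-$p$ homogeneous, the ratio is continuous and positive off $\{y=x\}$ on the normalized sphere, and the Taylor expansion controls the limit at $y=x$ where necessarily $|x|\to 1$), but the paper has a much more direct route that you seem not to have noticed: since $p-2<0$, the triangle inequality $|x+t(y-x)|\leq|x|+t|y-x|$ reverses upon raising to the power $p-2$, giving the pointwise bound $|x+t(y-x)|^{p-2}\geq(|x|+t|y-x|)^{p-2}$; restricting to $t\in[1/4,3/4]$ where $1-t\geq 1/4$ then yields $\int_0^1(1-t)|z_t|^{p-2}\,dt\geq\tfrac14\int_{1/4}^{3/4}(|x|+t|y-x|)^{p-2}\,dt\geq c(|x|^2+|y-x|^2)^{(p-2)/2}$ immediately. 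The ``degeneracy'' you identify as the main obstacle is in fact what makes the case $1<p<2$ the easier one here: the negative exponent means the triangle inequality works in your favor. So both approaches are valid, but the paper's is shorter and entirely explicit, with no compactness appeal.
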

	\begin{proof}
		Consider a function $f:\mathds{R}^N\rightarrow \mathds{R}$
		defined as $f(x)=\vert x\vert^p$. Writing Taylor expansion for 
		$f$ we get
 		\begin{equation*}
 			\vert y\vert^p=\vert x\vert^p+p\vert x\vert^{p-2}x\cdot(y-x)+\int_0^1{(1-t)D^2f(x+t(y-x))(y-x)\cdot(y-x)}\,dt. 
 		\end{equation*}
		If $p=2$, we thus have
		\begin{equation*}
 			\vert y\vert^2=\vert x\vert^2+2x\cdot(y-x)+\frac{1}{2}\vert y-x\vert^2, 
		\end{equation*}
		which gives us a desired inequality. We shall consider
		$p\neq 2$ from now on.
		
		For $p\neq 2$ the Hessian $D^2f(x)$ looks as follows:
		\begin{equation*}
			D^2f(x)=p\vert x\vert^{p-2}Id+p(p-2)\vert x\vert^{p-4}A,
		\end{equation*}
		where $A_{i,j}=x_i x_j$.
		We notice that $$0\leq A\xi\cdot\xi\leq\vert x\vert^2\vert\xi\vert^2\text{ for any vector }\xi\in\mathds{R}^N,$$
		yielding
		\begin{equation*}
			D^2f(x)\xi\cdot\xi\geq c\vert x\vert^{p-2}\vert\xi\vert^2
			\text{ for any vector }\xi\in\mathds{R}^N,
		\end{equation*}
		where $c=c(p)>0$ ($c=p$ for $p>2$, $c=p(p-1)$ for $1<p<2$).
		
		So, we have
		\begin{equation*}
			\vert y\vert^p\geq\vert x\vert^p+p\vert x\vert^{p-2}x\cdot(y-x)
		+\vert y-x\vert^2\int_0^1{(1-t)\vert x+t(y-x)\vert^{p-2}}\,dt.
		\end{equation*}
		
		Let us consider the cases of different $p$ separately.
		First, we deal with $1<p<2$. In this case $p-2<0$ and so
		\begin{equation*}
			\int_0^1{(1-t)\vert x+t(y-x)\vert^{p-2}}\,dt
			\geq \frac{1}{4}\int_{1/4}^{3/4}{\left(\vert x\vert+t\vert y-x\vert\right)^{p-2}}\,dt
			\geq c \left(\vert x\vert^2+\vert y-x\vert^2\right)^{\frac{p-2}{2}},
		\end{equation*}
		finishing the proof of lemma in this case.
		
		To tackle the case $p>2$, we further consider two cases. If $\vert y-x\vert<2\vert x\vert$, then
		\begin{equation*}
			\int_0^1{(1-t)\vert x+t(y-x)\vert^{p-2}}\,dt
			\geq c\int_{0}^{1/4}\vert x\vert^{p-2}\,dt
			\geq c \vert y-x\vert^{p-2}.
		\end{equation*}
		If instead $\vert y-x\vert\geq 2\vert x\vert$, then
		\begin{equation*}
			\int_0^1{(1-t)\vert x+t(y-x)\vert^{p-2}}\,dt
			\geq c\int_{4/7}^{6/7}\vert y-x\vert^{p-2}\,dt
			\geq c \vert y-x\vert^{p-2}.
		\end{equation*}
		
	\end{proof}


\end{document}